\documentclass [twoside,reqno, 12pt]{amsart}



\usepackage{hyperref}
\usepackage{cancel}
\usepackage{amsfonts}
\usepackage{amssymb}
\usepackage{a4}
\usepackage{color}
\usepackage{amscd,amssymb,amsfonts}

\usepackage{picture,xcolor}

\usepackage{etoolbox}

\usepackage{tikz}

\newtheorem{thm}{Theorem}[section]
\newtheorem{cor}[thm]{Corollary}
\newtheorem{lem}[thm]{Lemma}
\newtheorem{prop}[thm]{Proposition}
\newtheorem{exmp}[thm]{Example}

\newtheorem{rem}[thm]{Remark}

\theoremstyle{definition}

\numberwithin{equation}{section}


\newcommand{\C}{\mathbb{C}}

\newcommand{\N}{\mathbb{N}}

\newcommand{\R}{\mathbb{R}}

\parindent0pt
\parskip6pt

\def\tilde{\widetilde}
\def \bfo {\begin {eqnarray*} }
\def \efo {\end {eqnarray*} }
\def \ba {\begin {eqnarray*} }
\def \ea {\end {eqnarray*} }
\def \beq {\begin {eqnarray}}
\def \eeq {\end {eqnarray}}

\def \dist {\hbox{dist}}

\def \det {\hbox{det}}

\def \p {\partial}

\def\tilde{\widetilde}
\def \bfo {\begin {eqnarray*} }
\def \efo {\end {eqnarray*} }
\def \ba {\begin {eqnarray*} }
\def \ea {\end {eqnarray*} }
\def \beq {\begin {eqnarray}}
\def \eeq {\end {eqnarray}}

\def \dist {\hbox{dist}}

\def \det {\hbox{det}}

\def \p {\partial}


\begin{document}

\title[Inverse problems for semilinear Schr\"odinger equations]{Inverse problems for semilinear Schr\"odinger equations at large frequency via polynomial resolvent estimates on manifolds}

\author[Krupchyk]{Katya Krupchyk}
\address
        {K. Krupchyk, Department of Mathematics\\
University of California, Irvine\\
CA 92697-3875, USA }

\email{katya.krupchyk@uci.edu}

\author[Ma]{Shiqi Ma}

\address
       {S. Ma, School of Mathematics\\
Jilin University \\
Changchun, China}
\email{mashiqi@jlu.edu.cn}

\author[Kumar Sahoo]{Suman Kumar Sahoo}

\address
       {S. K. Sahoo, Seminar for Applied Mathematics\\
        Department of Mathematics\\ 
        ETH Z\"urich, Switzerland}
\email{susahoo@ethz.ch}

\author[Salo]{Mikko Salo}

\address
       {M. Salo, Department of Mathematics and Statistics\\
       University of Jyv\"askyl\"a, Jyv\"askyl\"a\\
       FI-40014, Finland}
\email{mikko.j.salo@jyu.fi}

\author[St-Amant]{Simon St-Amant}

\address
       {S. St-Amant, Department of Pure Mathematics and Mathematical Statistics\\ 
       University of Cambridge, Cambridge CB3 0WB, UK}
\email{sas242@cam.ac.uk}

\maketitle

\begin{abstract}
We study inverse boundary problems for semilinear Schr\"odinger equations on smooth compact Riemannian manifolds of dimensions $\ge 2$ with smooth boundary,  at a large fixed frequency. We show that certain classes of cubic nonlinearities are determined uniquely from the knowledge of the nonlinear Dirichlet--to--Neumann map at a large fixed frequency on quite general Riemannian manifolds. In particular, in contrast to the previous results available, here the manifolds need not satisfy any product structure, may have trapped geodesics, and the geodesic ray transform need not be injective. Only a mild assumption about the geometry of intersecting geodesics is required. We also establish a polynomial resolvent estimate for the Laplacian on an arbitrary smooth compact Riemannian manifold without boundary, valid for most frequencies. This estimate, along with the invariant construction of Gaussian beam quasimodes with uniform bounds for underlying constants and a stationary phase lemma with explicit control over all involved constants, constitutes the key elements in proving the uniqueness results for the considered inverse problems.

\end{abstract}

\section{Introduction and statement of results}

The anisotropic Calder\'on problem seeks to determine the electrical conductivity matrix of a medium by performing electrical measurements along its boundary, see \cite{Calderon_1980, Uhlmann_2014}.  To state the geometric version of this problem,  let $(M,g)$ be a smooth oriented compact Riemannian manifold of dimension $n\ge 2$ with smooth boundary. Associated to the metric $g$, we have the Laplace operator $-\Delta_g$ given in local coordinates by 
\[
-\Delta_g=-\sum_{j,k=1}^n |g|^{-1/2}\p_{x_j}(|g|^{1/2}g^{jk}\p_{x_k}),
\]
where $(g^{jk})=(g_{jk})^{-1}$ and $|g|=\det(g_{jk})$. Let us introduce the Cauchy data set on  $\p M$ of harmonic functions on $M$,
\[
C_g=\{(u|_{\p M}, \p_\nu u|_{\p M}): u\in C^\infty(M) \text{ such that } -\Delta_g u=0 \text{ in }M^{\text{int}}\},
\]
where $\nu$ is the unit outer normal to the boundary of $M$ and $M^{\text{int}}=M\setminus\p M$ stands for the interior of $M$. 
In the geometric version of the anisotropic Calder\'on problem, one wishes to recover the Riemannian manifold $(M,g)$ from the knowledge of the Cauchy data set $C_g$ on $\p M$. Because $C_{\psi^*g}=C_g$ for any smooth diffeomorphism $\psi:M\to M$ such that $\psi|_{\p M}=\text{Id}$, see  \cite{Lee_Uhlmann_1989}, one can only hope to recover a Riemannian manifold from $C_g$ up to this isometry. In dimension $n=2$,  this problem, with an additional obstruction arising from the conformal invariance of the Laplacian, is solved in \cite{Nachman_1996} for the case of Riemannian metrics on bounded domains in the plane, and in 
\cite{Lassas_Uhlmann_2001} for the case of Riemannian surfaces. In dimensions $n\ge 3$, the anisotropic Calder\'on problem is widely open. It is solved for real analytic manifolds in \cite{Lee_Uhlmann_1989, Lassas_Uhlmann_2001, Lassas_Taylor_Uhlmann_2003}, see also \cite{Guillarmou_Sa_Barreto_2009},  and in the $C^\infty$ case in \cite{DKSaloU_2009, DKurylevLS_2016}, for metrics in a fixed conformal class on a conformally transversally anisotropic (CTA) manifold, assuming that the geodesic X-ray transform on the transversal manifold is injective. In the latter case, the anisotropic Calder\'on inverse problem can be reduced to an inverse boundary problem of recovery of the potential in the Schr\"odinger equation from the knowledge of the Cauchy data of its solutions on the boundary. The approach of the works \cite{DKSaloU_2009, DKurylevLS_2016} relies on a construction of special solutions to the Schr\"odinger equation, called complex geometric optics solutions. The requirement that the manifold should be CTA guarantees the existence of so-called limiting Carleman weights on such manifolds, which are important to construct such solutions using the technique of Carleman estimates. However, a generic manifold of dimension $n\ge 3$ does not admit limiting Carleman weights, see \cite{Liimatainen_Salo_2012, Angulo-Ardoy_2017}.  Also, there are examples of manifolds for which the geodesic X-ray transform is non-injective, see \cite[Example 1.7]{KrLiimSalo_2022}. Thus, it is of great significance to try to remove the assumption of the injectivity of the geodesic X-ray transform on the transversal manifold in the case of CTA manifolds, or even to try to remove the assumption that the manifold is CTA altogether when solving the anisotropic Calder\'on problem. 

In this direction, the works \cite{LLLS_2021,Feizmohammadi_Oksanen_2020, FLL_2021, Krup_Uhlmann_nonlinear_mag} demonstrated that when solving inverse boundary problems for semilinear Schr\"odinger equations, no assumption on the transversal manifold is required, while still keeping the assumption that the manifold should enjoy a CTA structure, see also \cite{Krup_Uhlmann_Yan_nonlinear_mag_2021, Ma_Tzou_2021,  CFO_2022, FKianOksanen_2023}.  

The recent work \cite{Uhlmann_Wang_2021} demonstrated that the recovery of certain classes of potentials in the Schr\"odinger equation at a large fixed frequency did not require the manifold to be CTA. The work \cite{Ma_Sahoo_Salo_2022} extended the result of \cite{Uhlmann_Wang_2021} and showed that the manifold only needed to be non-trapping, and the geodesic X-ray transform needed to be stably invertible and continuous. We refer to the recent work \cite{St-Amant_2024}, where inverse problems of the recovery of connections from the Dirichlet--to--Neumann maps at a large fixed frequency were studied.

Motivated by \cite{Uhlmann_Wang_2021, Ma_Sahoo_Salo_2022}, the purpose of this paper is to show that an inverse boundary problem for a semilinear Schr\"odinger equation for a certain class of cubic nonlinearities, at a large fixed frequency, can be solved in quite general geometries. Specifically, we do not require our manifold to be non-trapping and we do not need any assumption related to the geodesic X-ray transform on it. Only a mild assumption about the geometry of geodesics is needed.  

To state the inverse problem, let $\lambda\ge 0$ be a frequency and let us consider the Dirichlet problem for the semilinear Schr\"odinger operator, 
\begin{equation}
\label{eq_1_1}
\begin{cases}
-\Delta_g u -\lambda^2 u+qu^3=0 &\text{in}\quad M^{\text{int}},\\
u=f& \text{on}\quad \p M,
\end{cases}
\end{equation} 
where $q\in C^{0,\alpha}(M)$ with some $0<\alpha<1$.  Here and in what follows $C^{k,\alpha}(M)$,  $k\in \N$,  stands for the H\"older space on $M$.
Assume that 
\begin{itemize}
\item[(A)] $\lambda^2$ is not a Dirichlet eigenvalue of $-\Delta_g$ on $M$. 
\end{itemize}

It follows from \cite[Theorem B.1]{Krup_Uhlmann_nonlinear_mag},  see also \cite[Proposition 1]{Feizmohammadi_Oksanen_2020} and \cite[Proposition 2.1]{LLLS_2021},
 that under the assumption (A), there is $\delta>0$ and $C>0$ such that when $f\in B_\delta (\p M):=\{f\in C^{2,\alpha}(\p M): \|f\|_{C^{2,\alpha}(\p M)}<\delta\}$, the problem \eqref{eq_1_1} has a unique solution $u=u_f\in C^{2,\alpha}(M)$ satisfying $\|u\|_{C^{2,\alpha}(M)}<C\delta$. Associated to the problem \eqref{eq_1_1}, we define the Dirichlet--to--Neumann map at the frequency $\lambda$ as follows
\[
\Lambda_q^\lambda f=\p_\nu u_f|_{\p M}, 
\]
where $f\in B_\delta (\p M)$.  The inverse problem that we are interested in is to determine $q$ from the knowledge of the Dirichlet--to--Neumann map $\Lambda_q^\lambda$ for a large but fixed frequency $\lambda$. 

First, motivated by \cite[Definition 1.2]{DKuLLS_2020} and \cite[Definition 2.1]{KrLiimSalo_2022}, we impose the following geometric condition on the manifold $(M,g)$:
\begin{itemize}
\item[(H)] for every point $x_0\in M^{\text{int}}$, there are two non-tangential unit speed geode\-sics, passing through $x_0$,  such that they do not have self-intersections at  $x_0$ and $x_0$ is  their only point of intersection. 
\end{itemize}

Our first main result is as follows, where $|\cdot|$ denotes the Lebesgue measure on $\mathbb{R}$.
\begin{thm}
\label{thm_main}
Let $(M,g)$ be a smooth compact oriented Riemannian manifold of dimension $n\ge 2$ with smooth boundary, satisfying the condition (H). Let $0<\alpha<1$ and let $q_1,q_2\in C^{0,\alpha}(M)$.
Then for any $\delta>0$, there exists a set $J\subset [1,\infty)$, $J=J(M,g,\delta)$, satisfying $|J|\le \delta$, and $\lambda_0=\lambda_0(M,g, \delta,  q_1-q_2)>0$ such that if for some $\lambda\ge \lambda_0$, $\lambda\notin J$,  and $\lambda$ satisfying the assumption (A), we have 
$\Lambda_{q_1}^\lambda=\Lambda_{q_2}^\lambda$ then $q_1=q_2$ in $M$. 
\end{thm}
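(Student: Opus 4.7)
The plan is to reduce the inverse problem to a density statement via a third order linearization of the Dirichlet--to--Neumann map, and then to produce, for an arbitrary interior point, a rich family of linear Helmholtz solutions whose product concentrates there, by combining Gaussian beam quasimodes on a closed extension with the polynomial resolvent estimate announced in the paper.

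First, fix three small parameters $\epsilon_1,\epsilon_2,\epsilon_3$, take boundary data $f=\epsilon_1 f_1+\epsilon_2 f_2+\epsilon_3 f_3$ with $f_k\in C^{2,\alpha}(\p M)$ sufficiently small, and let $u^{(j)}$ be the solution of \eqref{eq_1_1} with potential $q_j$, $j=1,2$. The first order linearizations $v_k:=\p_{\epsilon_k}u^{(j)}|_{\epsilon=0}$ solve $(-\Delta_g-\lambda^2)v_k=0$ in $M^{\text{int}}$ with $v_k|_{\p M}=f_k$, while the mixed third derivative $w^{(j)}:=\p_{\epsilon_1}\p_{\epsilon_2}\p_{\epsilon_3}u^{(j)}|_{\epsilon=0}$ satisfies $(-\Delta_g-\lambda^2)w^{(j)}=-6 q_j v_1 v_2 v_3$ with zero Dirichlet data. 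The hypothesis $\Lambda_{q_1}^{\lambda}=\Lambda_{q_2}^{\lambda}$ forces $\p_\nu w^{(1)}=\p_\nu w^{(2)}$ on $\p M$, so pairing $w^{(1)}-w^{(2)}$ against an arbitrary linear solution $v_0$ of $(-\Delta_g-\lambda^2)v_0=0$ and integrating by parts yields the orthogonality relation
\[
\int_M (q_1-q_2)\,v_0 v_1 v_2 v_3 \,dV_g=0
\]
for all solutions $v_0,v_1,v_2,v_3$ of the linear Helmholtz equation in $M^{\text{int}}$.

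Next, fix an arbitrary $x_0\in M^{\text{int}}$ and, using hypothesis (H), choose two non-tangential unit speed geodesics $\gamma_1,\gamma_2$ through $x_0$ whose only intersection is $x_0$. Embed $(M,g)$ into a smooth closed Riemannian manifold $(\t M,\t g)$ and extend the geodesics slightly beyond $M$. Apply the invariant Gaussian beam construction of the paper to produce four quasimodes $V_k(x,\lambda)=\lambda^{(n-1)/4}e^{i\lambda\Phi_k(x)}a_k(x,\lambda)$, two concentrating on $\gamma_1$ and two on $\gamma_2$, with the sign conventions on the phases arranged so that $\Phi_1+\Phi_2+\Phi_3+\Phi_4$ vanishes to high order at $x_0$. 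The quasimode remainders $r_k:=(-\Delta_{\t g}-\lambda^2)V_k$ are of arbitrarily large negative order in $\lambda$, with all constants controlled uniformly. For $\lambda\notin J$, the polynomial resolvent estimate on $(\t M,\t g)$ provides $W_k$ solving $(-\Delta_{\t g}-\lambda^2)W_k=-r_k$ with $\|W_k\|$ super-polynomially small in $\lambda$; then $v_k:=(V_k+W_k)|_M$ are exact solutions of the linear Helmholtz equation on $M^{\text{int}}$ that are admissible in the orthogonality relation.

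Finally, plug these $v_k$ into the orthogonality relation and apply the stationary phase lemma with the explicit constant control provided by the paper. Because $\gamma_1$ and $\gamma_2$ meet only at $x_0$ and transversally there, the product of the four beams concentrates at $x_0$ with a non-degenerate complex Hessian, so the leading term of the large-$\lambda$ asymptotics is a nonzero constant multiple of $(q_1-q_2)(x_0)$, while contributions from the corrections $W_k$ and from the subleading terms of the stationary phase expansion vanish as $\lambda\to\infty$ along $\lambda\notin J$. This forces $(q_1-q_2)(x_0)=0$, and since $x_0\in M^{\text{int}}$ was arbitrary and $q_1-q_2\in C^{0,\alpha}(M)$ extends continuously to $\p M$, we conclude $q_1=q_2$ on $M$. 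The main obstacle is reconciling the polynomial growth in $\lambda$ of the resolvent with the uniform super-polynomial smallness of the quasimode remainders and the amplitude $\lambda^{(n-1)/4}$ of the beams, all with constants independent of $\lambda\notin J$ and of $q_1-q_2$ beyond its H\"older norm; the uniform bounds in the invariant Gaussian beam construction and the explicit constants in the stationary phase lemma, both supplied by the paper, are exactly what make the correction terms negligible compared with the principal concentration term.
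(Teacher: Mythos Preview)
Your linearization step and the construction of exact Helmholtz solutions from Gaussian beams via the resolvent estimate are essentially the same as in the paper. The gap is in the final step, and it is a real one.

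You write that the subleading terms ``vanish as $\lambda\to\infty$ along $\lambda\notin J$'' and that this ``forces $(q_1-q_2)(x_0)=0$''. But the hypothesis $\Lambda_{q_1}^\lambda=\Lambda_{q_2}^\lambda$ is only assumed at a \emph{single} frequency $\lambda\ge\lambda_0$; you are not allowed to send $\lambda\to\infty$. At a fixed $\lambda$ the stationary phase computation only yields
\[
|p(x_0)|\le C_{x_0}\,\lambda^{-\alpha/2}\,\|p\|_{C^{0,\alpha}(M)},\qquad p:=q_1-q_2,
\]
where the constant $C_{x_0}$ depends on the geodesics through $x_0$, their angle, the Hessian of the combined phase, etc. This does not give $p(x_0)=0$, and running the argument over all $x_0\in M^{\text{int}}$ does not help: you would need the constants to be uniform in $x_0$ \emph{and} you would still only obtain $\|p\|_{L^\infty}\le C\lambda^{-\alpha/2}\|p\|_{C^{0,\alpha}}$, which is not a contradiction by itself.

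The paper closes this loop differently. It picks $x_0$ to be a point where $|p|$ attains its supremum, so that the estimate above reads $\|p\|_{L^\infty(M)}\le C\lambda^{-\alpha/2}\|p\|_{C^{0,\alpha}(M)}$ with $C$ depending on that particular $x_0$ and its pair of geodesics. Then it uses that $p$ automatically lies in some class $\mathcal{A}(B)$, i.e.\ $\|p\|_{C^{0,\alpha}}\le B\|p\|_{L^\infty}$, to absorb the right-hand side and conclude $\|p\|_{L^\infty}=0$ once $\lambda$ exceeds a threshold $\lambda_0$ depending on $B$, the point $x_0$, and the geodesics---hence on $p$, as the statement allows. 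If the supremum is attained on $\partial M$, a separate boundary determination argument (via oscillatory boundary-concentrated solutions, not Gaussian beams) is needed; your continuity remark does not cover this because you have not first established $p=0$ in the interior. Finally, the ``uniform bounds in the invariant Gaussian beam construction'' you invoke are the machinery for Theorem~\ref{thm_main_2}, not Theorem~\ref{thm_main}; for the latter the constants are allowed to depend on $x_0$, and it is precisely the maximum-point trick combined with the $\mathcal{A}(B)$ bound that makes a single fixed $\lambda$ suffice.
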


\begin{rem}
In contrast to most prior results concerning elliptic partial differential equations in dimensions $n \geq 3$, the manifolds considered in Theorem \ref{thm_main} are not limited to having any specific product structure. They may contain trapped geodesics, and the geodesic X-ray transform is not required to be injective.
\end{rem}

Let us proceed to make some comments regarding the geometric assumption (H). 
\begin{exmp}
If the manifold $(M,g)$ is simple, i.e.\ a compact simply connected Riemannian manifold with strictly convex boundary so that no geodesic has conjugate points, then the geometric assumption (H) holds. This follows directly from the fact that  the exponential map $\exp_x$ is a diffeomorphism onto $M$ for any $x \in M$ \cite{PSU_book}, and hence any two geodesics $\exp_x(tv)$ and $\exp_x(tw)$ where $v \neq w$, $|v| = |w| = 1$, will satisfy (H).
\end{exmp}

\begin{exmp}
If the manifold $(M,g)$ satisfies the strict Stefanov--Uhlmann regularity condition at every point $(x_0,\xi_0)\in S^*M^{\mathrm{int}}$, see \cite[Definition 1.8]{KrLiimSalo_2022}, then the assumption (H) holds, see  \cite[Lemma 3.1]{DKuLLS_2020}. As an example of such manifolds, consider $M$ to be the closure of a neighborhood of a geodesic arc from the north pole to the south pole of the unit sphere $\mathbb{S}^3\subset \R^4$, see \cite{DKuLLS_2020} and \cite[Example 1.10]{KrLiimSalo_2022}. Note that the manifold $M$ contains conjugate points so it is not simple.
\end{exmp}

\begin{exmp}
Let $M = \mathbb{S}^1 \times [0,a]$, $a > 0$, be a cylinder with its usual flat metric $g$.  The geodesics on $M$ are straight lines, circular cross sections, and helices that wind around the cylinder. It is shown in \cite[Appendix A]{KrLiimSalo_2022} that $(M,g)$ satisfies the assumption (H). Furthermore, the geodesic X-ray transform is not invertible on $M$, since the kernel contains functions of the form $f(e^{it}, s) = h(s)$ where $h \in C^{\infty}_0((0,a))$ integrates to zero over $[0,a]$, and $(M,g)$ has trapping. 
\end{exmp}

We shall next propose a geometric condition on the manifold $(M,g)$ which guarantees that the assumption (H) holds for almost every point in $M^{\text{int}}$, and which might be of independent interest. To motivate the proposed geometric condition, we first note that the fact that all the geodesics on spheres intersect twice can be attributed to the high order of conjugacy of the sphere $\mathbb{S}^n$. To state our result, we shall introduce some definitions, following \cite[Section 10]{Lee_Riem_mfld_2018}, \cite[Chapter 5, Section 3]{do_Carmo_1992}. Given a geodesic $\gamma: I\to M$, and two points  $p=\gamma(a)$,  $q=\gamma(b)$, $a,b\in I$, we say that $p$ and $q$ are conjugate along $\gamma$ if there is a Jacobi field along $\gamma$ vanishing at $t=a$ and $t=b$ but not identically zero. The order of conjugacy of $p$ relative to $q$ along $\gamma$ denoted $\mathrm{conj}_{\gamma}(p, q)$, is the dimension of the space of Jacobi fields along $\gamma$ that vanish at $a$ and $b$. Note that $\mathrm{conj}_\gamma(p, q) \leq n-1$,  and that equality is achieved on the sphere $\mathbb{S}^n$ by taking $p$ and $q$ to be antipodal points, and $\gamma$ any great circle connecting them, see  \cite[Section 10, page 299]{Lee_Riem_mfld_2018}. We define the order of conjugacy of $p \in M$ as the maximal value of $\mathrm{conj}_\gamma(p, q)$ over all other points $q \in M$ and geodesics $\gamma$ connecting $p$ to $q$. Similarly, the order of conjugacy of $M$ is the maximal order of conjugacy of all points in $M$. We have the following result.

\begin{thm}\label{thm:intersection}
Let $(M,g)$ be a smooth compact Riemannian manifold of dimension $n\ge 3$ with smooth boundary.  Suppose that the order of conjugacy of $M$ is at most $n-3$. Then the assumption (H) holds for almost all $x \in M^{\mathrm{int}}$. 
\end{thm}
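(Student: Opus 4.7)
Fix $x_0 \in M^{\mathrm{int}}$; the plan is to produce two directions $v, w \in S_{x_0}M$ whose maximal geodesics verify the conditions of (H). Consider the exponential map $\exp_{x_0} : \Omega_{x_0} \to M$, where $\Omega_{x_0} \subset T_{x_0}M$ is its maximal domain. Under the canonical identification of $\ker d(\exp_{x_0})_V$ with the space of Jacobi fields along $t \mapsto \exp_{x_0}(tV)$ vanishing at $t = 0$ and $t = 1$, the order-of-conjugacy hypothesis becomes $\mathrm{rank}\, d(\exp_{x_0})_V \geq 3$ for every $V \in \Omega_{x_0}$. Sard's theorem then gives that the conjugate locus $C(x_0) := \exp_{x_0}(\Sigma_{x_0})$, the image of the critical set $\Sigma_{x_0}$, is Lebesgue null in $M$; and since $\Sigma_{x_0}$ is itself null and $\exp_{x_0}|_{\Omega_{x_0}\setminus\Sigma_{x_0}}$ is a local diffeomorphism, the full pullback $\exp_{x_0}^{-1}(C(x_0))$ is Lebesgue null in $T_{x_0}M$. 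A Fubini argument in polar coordinates $T_{x_0}M \setminus \{0\} \cong S_{x_0}M \times (0,\infty)$ then yields that, for almost every $v \in S_{x_0}M$, the set $I_v := \{t : \gamma_v(t) \in C(x_0)\}$ has one-dimensional Lebesgue measure zero; the same polar argument with $C(x_0)$ replaced by the closed null sets $\exp_{x_0}^{-1}(x_0) \setminus \{0\}$ and the glancing set shows that, for almost every $v$, the geodesic $\gamma_v$ additionally does not return to $x_0$ and is non-tangential to $\partial M$. Fix such a ``good'' $v$.

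Next I would let $\Gamma := \gamma_v \setminus \{x_0\}$ and prove that the set
\[
B_v := \{w \in S_{x_0}M : \gamma_w \cap \Gamma \neq \emptyset \}
\]
has Lebesgue measure zero in $S_{x_0}M$. Decompose $\Gamma = \Gamma_{\mathrm{reg}} \sqcup \Gamma_{\mathrm{crit}}$, with $\Gamma_{\mathrm{crit}} := \gamma_v(I_v)$. Points of $\Gamma_{\mathrm{reg}}$ are regular values of $\exp_{x_0}$, so by the implicit function theorem $\exp_{x_0}^{-1}(\Gamma_{\mathrm{reg}})$ is a countable union of smooth one-dimensional submanifolds of $T_{x_0}M \setminus \{0\}$; its image under the locally Lipschitz projection $W \mapsto W/|W|$ to $S_{x_0}M$ therefore has Hausdorff dimension at most $1$. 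For $\Gamma_{\mathrm{crit}}$, the rank hypothesis combined with a Whitney stratification of $\Omega_{x_0}$ by locally-constant-rank submanifolds and the constant rank theorem yields the key fiber-dimension bound
\[
\dim_{\mathcal{H}} \exp_{x_0}^{-1}(p) \leq n - 3 \quad \text{for every } p \in M;
\]
together with $\mathcal{H}^1(\Gamma_{\mathrm{crit}}) = 0$ (which holds since $I_v$ has one-dimensional measure zero and $\gamma_v$ is smooth), this implies $\dim_{\mathcal{H}} \exp_{x_0}^{-1}(\Gamma_{\mathrm{crit}}) \leq (n-3) + 1 = n-2$, and the same bound survives the projection to $S_{x_0}M$. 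Since $n \geq 3$, both contributions to $B_v$ have Hausdorff dimension strictly less than $n-1 = \dim S_{x_0}M$, hence $B_v$ is Lebesgue null.

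Selecting any $w \in S_{x_0}M$ outside $B_v$ and outside the analogous null sets of tangential directions or directions whose geodesics return to $x_0$ then produces the required pair $(v, w)$, so in fact (H) holds at every $x_0 \in M^{\mathrm{int}}$, and \emph{a fortiori} almost everywhere. The main obstacle, and the only essential use of the order-of-conjugacy hypothesis, is the fiber-dimension estimate for critical values of $\exp_{x_0}$: extracting a uniform Hausdorff-dimension bound on level sets from the global lower bound $\mathrm{rank}\, d\exp_{x_0} \geq 3$ requires a careful stratification argument, which must then be paired with the previously established measure-zero control of $I_v$ for generic $v$ to make the bound on $\mathcal{H}^1(\Gamma_{\mathrm{crit}})$ available.
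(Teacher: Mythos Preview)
Your overall strategy is genuinely different from the paper's: you aim for a measure-theoretic argument showing the set $B_v$ of bad directions $w$ is null via a Hausdorff-dimension count on $\exp_{x_0}^{-1}(\gamma_v)$, whereas the paper argues by a finitary perturbation. Given non-tangential $\gamma_{u},\gamma_{v}$ through $x_0$ meeting at finitely many further points $q_j$, the paper associates to each $q_j$ an affine plane $P_j\subset T_{x_0}M$ of dimension at most $n-2$ (this is exactly where the conjugacy bound enters), chooses a single direction $\alpha\in T_vS_{x_0}M$ transversal to the projections of all the $P_j$, and uses a local Jacobi-field lemma to show that the variation $\gamma_s$ separates from $\gamma_u$ near each $q_j$ for all small $s\neq 0$.

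There is, however, a real gap at your first step, and it is the reason your conclusion that (H) holds at \emph{every} $x_0$ overshoots the theorem. Your polar/Fubini argument for ``almost every $v$ gives a non-tangential $\gamma_v$'' only disposes of glancing directions and of geodesics returning to $x_0$; it says nothing about \emph{trapped} directions, those $v$ for which $\gamma_v$ never reaches $\partial M$ in forward or backward time, and there is no a priori reason the trapped set in $S_{x_0}M$ should be null. The paper confronts exactly this point and does \emph{not} claim what you claim: its preliminary lemma uses a Lebesgue density-point argument together with the distance-minimising geodesic from $x_0$ to $\partial M$ (which necessarily exits normally) to prove only that for almost every \emph{point} $x_0$ there exists \emph{some} non-tangential direction, and then obtains a second one nearby by openness of transversal exit. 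With that existence input in hand, your dimension count on $B_v$ could be run on $w$ in a small open neighbourhood of $v$ where non-tangentiality persists. You should also tighten the $\Gamma_{\mathrm{crit}}$ step: the inequality $\dim_{\mathcal{H}}\exp_{x_0}^{-1}(\Gamma_{\mathrm{crit}})\le (n-3)+1$ does not follow merely from $\mathcal{H}^1(\Gamma_{\mathrm{crit}})=0$ plus a pointwise fibre bound. A cleaner route, which bypasses the regular/critical split entirely, is to note that the rank hypothesis forces $\mathrm{rank}\,d(\phi\circ\exp_{x_0})\ge 2$ whenever $\phi$ denotes local Fermi coordinates transverse to $\gamma_v$, so $\exp_{x_0}^{-1}(\gamma_v)$ is everywhere locally contained in a codimension-$2$ submanifold of $T_{x_0}M$ and hence has Hausdorff dimension at most $n-2$.
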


\begin{exmp}
Let $(M_j,g_j)$ be a smooth compact Riemannian manifold of dimension $n_j\ge 1$ without boundary, $j=1,2,3$.   If $M_1$ has order of conjugacy $k_1$ and $M_2$ has order of conjugacy $k_2$, then $M_1 \times M_2$ has order of conjugacy $k_1 + k_2$. Indeed, if $J$ is a Jacobi field vanishing at $p = (p_1, p_2) \in M_1 \times M_2$, then
\[
J(t) = d (\exp_{(p_1, p_2)})_{(tv_1, tv_2)}(tw_1, tw_2) = d(\exp_{p_1})_{tv_1}(tw_1) \oplus d(\exp_{p_2})_{tv_2}(tw_2)
\]
decomposes as the direct sum of Jacobi fields $J_1$ and $J_2$ on $M_1$ and $M_2$ vanishing at $p_1$ and $p_2$ respectively, see \cite[Proposition 10.10]{Lee_Riem_mfld_2018}. Hence, $J$ vanishes at $q = (q_1, q_2)$ if and only if $J_1$ and $J_2$ vanish at $q_1$ and $q_2$ respectively. Since the manifold $M_j$ has order of conjugacy at most $n_j-1$, the product of three manifolds $M_1\times M_2\times M_3$ has order of conjugacy at most $n_1+n_2+n_3-3$, where $n_1+n_2+n_3$ is the dimension of $M_1\times M_2\times M_3$. Hence, any smooth compact Riemannian manifold with smooth boundary that can be isometrically embedded as a codimension 0 submanifold (also known as a regular domain) of a product of three manifolds $M_1\times M_2\times M_3$ satisfies the assumption of Theorem \ref{thm:intersection}. 
\end{exmp}

Next, we let $0 < \alpha < 1$ be fixed and following \cite{Ma_Sahoo_Salo_2022}, for any nonzero $p\in C^{0,\alpha}(M)$, we introduce the frequency function $N(p)$ by 
\[
N(p)=\frac{\|p\|_{C^{0,\alpha}(M)}}{\|p\|_{L^\infty(M)}},
\]
and we set $N(p)=0$ if $p=0$. When $B>0$, we define the set $\mathcal{A}(B)$ of admissible perturbations by 
\[
\mathcal{A}(B) = \{p\in  C^{0,\alpha}(M): N(p)\le B \}.
\]

Note that for any $q_1,q_2\in C^{0,\alpha}(M)$, there exists a constant $B>0$ such that $q_1-q_2\in \mathcal{A}(B)$. Recall that in Theorem \ref{thm_main}, the frequency $\lambda_0$ depends on the difference  $q_1-q_2$.  Specifically, it follows from the proof of Theorem \ref{thm_main} that the value of $\lambda_0$ is contingent on the following factors:
\begin{itemize}
\item the constant $B>0$ such that $q_1-q_2\in \mathcal{A}(B)$, 
\item the specific point $x_0\in M$ where $\sup_{x\in M}|q_1(x)-q_2(x)|$ is reached,
\item the choice of two non-tangential geodesics passing through $x_0$ and fulfilling the assumption (H), including their lengths and the angle between them. 
\end{itemize}

Our next result aims to improve Theorem \ref{thm_main} by eliminating the dependency of the frequency $\lambda_0$ on a specific point $x_0$ and the particular choice of two non-tangential geodesics passing through $x_0$ and satisfying assumption (H). To achieve this, we will impose the following geometric condition on the manifold $(M, g)$, which will replace the condition (H). To state this condition, we let $\p_+SM$ be a part of the boundary of the unit sphere bundle $SM$, consisting of inward vectors, see Section \ref{sec_solutions_Helmholtz_uniform_constants} below. For  $(x, w)\in \p_+SM$,   we let  $\gamma=\gamma_{x, w}:[0,\tau(x,w)]\to M$ be the unit speed geodesic such that $\gamma(0)=x$, and $\dot{\gamma}(0)=w$.

\begin{itemize}
\item[(H1)] There are constants $T > 0$,  $0<\theta_0 <\pi/2$, $0 < r < \mathrm{Inj}(M)/2$, and $c_0 > 0$, such that for almost every point $x_0\in M^{\text{int}}$, there exist two non-tangential unit-speed geodesics $\gamma=\gamma_{x_1,w_1}:[0,\tau(x_1,w_1)]\to M$ and $\eta=\eta_{x_2,w_2}:[0,\tau(x_2,w_2)]\to M$, $(x_1,w_1), (x_2,w_2)\in \p_+SM$ such that 
\begin{itemize}
\item[(i)]
$\gamma(t_0)=\eta(\tau_0)=x_0$, $t_0\in (0,\tau(x_1,w_1))$, $\tau_0\in(0,\tau(x_2,w_2))$, 
 \item[(ii)] the  length of $\gamma$ and $\eta$ does not exceed $T$,
\item[(iii)] $\gamma$ and $\eta$ form an angle $\theta$ at $x_0$ belonging to the interval $[\theta_0,\pi/2]$, 
\item[(iv)] if $d(\gamma(t), \eta(\tau))  <r$, then $|t-t_0| \leq c_0 d(\gamma(t), \eta(\tau))$ and $|\tau-\tau_0| \leq c_0 d(\gamma(t), \eta(\tau))$.
\end{itemize}
\end{itemize}
Here, $\mathrm{Inj}(M)$ denotes the injectivity radius of $M$, defined as follows. Let $(S,g)$ be a closed extension of $(M,g)$, see \cite[Lemma 3.1.8]{PSU_book}, and we assume that $(S,g)$ is fixed once for all. We write $\mathrm{Inj}(M):= \mathrm{Inj}(S)$, and we have $\mathrm{Inj}(M)>0$ as $S$ is compact, see \cite[Lemma 6.16]{Lee_Riem_mfld_2018}. We denote the Riemannian distance as $d(\cdot, \cdot)$. The angle between $\gamma$ and $\eta$ at the point $x_0$ is denoted by $\theta$, where $0 \leq \theta \leq \pi$, and $\cos \theta = g(\dot{\gamma}(t_0), \dot{\eta}(\tau_0))$. Our next main result is as follows. 
\begin{thm}
\label{thm_main_2}
Let $(M,g)$ be a smooth compact oriented Riemannian manifold of dimension $n\ge 2$ with smooth boundary.    Assume that $(M,g)$ satisfies the condition (H1) with some constants $T>0$,  $0<\theta_0 <\pi/2$,  $0<r<\mathrm{Inj}(M)/2$, and $c_0 > 0$. Let $B>0$.  Assume that $q_1,q_2\in C^{0,\alpha}(M)$ are such that $q_1-q_2\in \mathcal{A}(B)$. Then for any $\delta>0$, there exists a set $J\subset [1,\infty)$, $J=J(M,g,\delta)$, satisfying $|J|\le \delta$, and $\lambda_0=\lambda_0(M,g, \delta, B, T, \theta_0, r,c_0)>0$ such that if for some $\lambda\ge \lambda_0$, $\lambda\notin J$,  and $\lambda$ satisfying the assumption (A), we have 
$\Lambda_{q_1}^\lambda=\Lambda_{q_2}^\lambda$ then $q_1=q_2$ in~$M$. 
\end{thm}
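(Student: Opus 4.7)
\emph{Proof plan.} I plan to use the same higher-order linearization scheme as for Theorem \ref{thm_main}, while carefully tracking every constant so that both the exceptional set $J$ and the threshold $\lambda_0$ depend only on the uniform geometric data $T, \theta_0, r, c_0$ (and on $B$), and not on the individual point $x_0$ or on the specific pair $(\gamma, \eta)$ of intersecting geodesics. The first step is to set $q := q_1 - q_2$ and differentiate the identity $\Lambda_{q_1}^\lambda = \Lambda_{q_2}^\lambda$ three times in a three-parameter family of small boundary data; because the nonlinearity is the cubic $q u^3$, this will produce the integral identity
\[
\int_M q \, v_1 v_2 v_3 v_4 \, dV_g = 0
\]
for every quadruple of solutions $v_j \in C^{2,\alpha}(M)$ of $(-\Delta_g - \lambda^2) v_j = 0$ in $M^{\mathrm{int}}$.

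Next, for almost every $x_0 \in M^{\mathrm{int}}$ I would take the two non-tangential geodesics $\gamma, \eta$ supplied by (H1) and build Gaussian beam quasimodes on a fixed closed extension $(S,g)$ that concentrate along them. Four invariant quasimodes $V_\gamma^\pm, V_\eta^\pm$ are to be constructed with complex phases $\pm \phi_\gamma, \pm \phi_\eta$ (whose imaginary parts vanish quadratically along the respective geodesics) and amplitudes controlled only in terms of $T$, $r$, and intrinsic data of $(S,g)$; this is the invariant Gaussian beam construction with uniform constants announced in the abstract. The polynomial resolvent estimate on $(S,g)$, valid for $\lambda \in [1,\infty) \setminus J$ with $|J| \le \delta$ and $J = J(M,g,\delta)$, will then upgrade these quasimodes to exact Helmholtz solutions $v_j$ on $M$ with remainders of arbitrarily large polynomial decay in $\lambda$. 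Inserting the resulting $v_j$ into the linearization identity and pairing $V_\gamma^+$ with $V_\gamma^-$ and $V_\eta^+$ with $V_\eta^-$ so as to cancel the oscillating factors away from $x_0$ will reduce the question to a single oscillatory integral whose phase has $x_0$ as an isolated critical point.

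Hypothesis (H1)(iii) forces the angle $\theta$ between $\dot{\gamma}(t_0)$ and $\dot{\eta}(\tau_0)$ into $[\theta_0, \pi/2]$, which will bound the Hessian of the imaginary part of the combined phase at $x_0$ from below uniformly in $x_0$; (H1)(iv) supplies quantitative transversality and so confines the effective support of the quadruple product to an $O(\lambda^{-1/2})$ neighborhood of $x_0$, ruling out spurious stationary points elsewhere on $M$; and (H1)(ii) bounds the interval on which the transport and Jacobi equations underlying the beam construction must be solved, keeping the amplitudes uniformly bounded. Applying a stationary phase lemma with explicit control over all involved constants --- the third ingredient highlighted in the abstract --- will then evaluate the integral as $c(\theta) q(x_0) + \mathcal{O}(\lambda^{-\epsilon})$, with $c(\theta)$ bounded away from zero on $[\theta_0, \pi/2]$ and the remainder constant depending only on the uniform geometric parameters and on $B$ (entering through $N(q) \le B$ to promote a pointwise value to a H\"older-neighborhood bound). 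Choosing $\lambda \ge \lambda_0(M,g,\delta,B,T,\theta_0,r,c_0)$ with $\lambda \notin J$ and $\lambda^2$ not a Dirichlet eigenvalue will then force $q(x_0) = 0$; combined with the H\"older continuity of $q$ and the fact that (H1) holds for almost every $x_0$, this yields $q \equiv 0$.

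The hard part will be the uniformity itself: every constant appearing in the beam amplitudes, in the polynomial resolvent correction, and in the stationary phase remainder must be dominated solely by $T, \theta_0, r, c_0$ (and $B$), independently of $x_0$. Condition (H1) is engineered exactly so that each ingredient admits such a bound --- the length bound uniformizes the transport and Jacobi equations, the positive angle bound yields the uniform lower bound on the Hessian of the phase, and the quantitative near-intersection estimate handles the effective support of the amplitudes --- so the main technical task will be to assemble the invariant Gaussian beam construction, the polynomial resolvent estimate on $(S,g)$, and the quantitative stationary phase lemma into a single uniform inequality in which the implicit constants depend only on the allowed parameters.
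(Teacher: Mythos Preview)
Your proposal is correct and follows essentially the same approach as the paper's proof: third-order linearization, uniform Gaussian beams from \cite{Ma_Sahoo_Salo_2022} along the two geodesics supplied by (H1), polynomial resolvent correction to exact solutions, pairing with conjugates so that only $|v|^2|w|^2$ survives, and a rough stationary phase with explicitly tracked constants, with (H1)(ii)--(iv) playing exactly the roles you identify (length bound $\Rightarrow$ uniform amplitude control, angle bound $\Rightarrow$ uniform Hessian lower bound $\nabla^2\Psi(x_0)\ge c(\theta_0)g$, quantitative near-intersection $\Rightarrow$ support of $vw$ localized in a ball of radius $(1+2c_0)\delta_1$ about $x_0$ with the beams in simple form there). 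Two minor points you gloss over that the paper does handle: a boundary determination step (Appendix~\ref{app_boundary_determination}) is needed both to dispose of the case when $\sup_M|q|$ is attained on $\partial M$ and to extend $q$ by zero when the stationary-phase ball meets $\partial M$; and the stationary phase does not literally give $q(x_0)=0$ at each point but rather $|q(x_0)|\le CB\lambda^{-\alpha/2}\|q\|_{L^\infty(M)}$ with $C$ independent of $x_0$, from which $q\equiv 0$ follows by taking the supremum over the (dense) set of admissible $x_0$ rather than by a pointwise-plus-density argument.
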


Let us make some comments regarding the geometric assumption (H1). The condition $r < \mathrm{Inj}(M)/2$ ensures that the geodesics $\gamma$ and $\eta$ in assumption (H1) do not have self-intersections for $|t-t_0|<r$ and $|\tau-\tau_0|<r$, respectively. The first three conditions are straightforward. As for condition (iv), it is a quantitative way to state that the geodesics $\gamma$ and $\eta$ intersect only at $x_0$ and they do not have self-intersections at $x_0$. Proposition \ref{prop:sec_curvatures} in Appendix \ref{sec_discussion_examples} shows that condition (iv) locally follows from condition (iii).
\begin{exmp}
\label{example_simple_manifold_H1}
If the manifold $(M,g)$ is simple then the geometric assumption (H1) holds, see Appendix \ref{sec_discussion_examples} for the details. 
\end{exmp}

\begin{exmp}
\label{example_cylinder_H1}
The condition (H1) holds for the cylinder  $M = \mathbb{S}^1 \times [0,a]$, $a > 0$, with its usual flat metric, see Appendix \ref{sec_discussion_examples} for the details.
\end{exmp}

Let us next make some comments about the class of potentials in the cubic nonlinearity considered in Theorem \ref{thm_main_2}. The assumption that  $p\in \mathcal{A}(B)$ is similar to the assumption that the perturbation is angularly controlled in \cite[Theorem 2]{Rakesh_Uhlmann_2014} or horizontally controlled in 
\cite{Rakesh_Salo_2020}.   In particular, the assumption $p\in \mathcal{A}(B)$ is always satisfied for some $B$ if $p$ lies in a finite-dimensional space. We refer to Appendix \ref{sec_example} for an example of an infinite set of linearly independent admissible perturbations.

Let us proceed to discuss the main ideas in the proof of Theorem \ref{thm_main} and Theorem \ref{thm_main_2}. The crucial ingredient in both proofs is the polynomial resolvent estimate for the Laplacian on a smooth compact Riemannian manifold without boundary, valid for most frequencies, established in Theorem \ref{thm_resolvent} below. This resolvent estimate could also be of independent interest. Its proof is quite simple and uses only the self-adjoint resolvent bound, see \eqref{eq_2_3}, and a rough version of the Weyl law, see \eqref{eq_2_6}. We refer to \cite{Lafontaine_Spence_Wunsch_2021}, see also \cite{Tang_Zworski_1998, Stefanov_2001}, for polynomial resolvent bounds for most frequencies in the black box scattering.

Once the polynomial resolvent estimates have been established, the proof of Theorem \ref{thm_main} proceeds as follows. First, using the third order linearization of the problem \eqref{eq_1_1}, we derive the integral identity \eqref{eq_3_4} valid for four solutions of the Helmholtz equation $(-\Delta_g-\lambda^2)v=0$ in $M^{\text{int}}$. The construction of such solutions is based on Gaussian beam quasimodes to the Helmholtz equation. To convert these approximate solutions into exact solutions, we rely on the solvability result which is a consequence of the polynomial resolvent estimate of Theorem \ref{thm_resolvent}. Substituting the constructed solutions into the integral identity \eqref{eq_3_4} and using a rough stationary phase argument combined with the boundary determination of the potential, allows us to complete the proof of Theorem \ref{thm_main}.

The proof of Theorem \ref{thm_main_2} follows a similar approach to that of Theorem \ref{thm_main}. However, we introduce a crucial new element that allows us to remove the dependence of the frequency $\lambda_0$ on a specific point $x_0$ and the specific choice of two non-tangential geodesics passing through this point. This crucial element involves the invariant construction of Gaussian beam quasimodes, with uniform bounds for the underlying constants, as recently established in \cite[Theorem 6.2]{Ma_Sahoo_Salo_2022}, see also \cite{St-Amant_2024}. Additionally, we conduct a more thorough analysis of all constants involved during the proof.

The paper is organized as follows. Section \ref{sec_resolvent} contains the proof of polynomial resolvent estimates for the Laplacian on a smooth compact Riemannian manifold. The proof of Theorem \ref{thm_main} is given in Section \ref{sec_proof_main}. Section \ref{sec_proof_main_2} is devoted to the proof of Theorem \ref{thm_main_2}. 
The proof of Theorem \ref{thm:intersection} is presented in Section \ref{sec:intersection}.  Appendix \ref{sec_rough_stationary_phase} discusses a standard rough version of the stationary phase lemma required for the proofs of Theorem \ref{thm_main} and Theorem \ref{thm_main_2}.  A boundary determination of a potential from the integral identity  \eqref{eq_3_4}, used in the proofs of Theorem \ref{thm_main} and Theorem \ref{thm_main_2}, is presented in Appendix \ref{app_boundary_determination}. Appendix \ref{sec_example} contains an example of an infinite set of linearly independent admissible perturbations. Finally, Appendix \ref{sec_discussion_examples} provides examples of manifolds that satisfy the assumption (H1).

\section{Polynomial resolvent estimates for the Laplacian on a smooth compact Riemannian manifold}
\label{sec_resolvent}

Let $(N,g)$ be a smooth compact Riemannian manifold of dimension $n\ge 2$ without boundary. Let $-\Delta_g$ be the positive Laplace operator on $N$, associated with the metric $g$.  It is a self-adjoint operator on $L^2(N)$ with the domain $H^2(N)$, the standard Sobolev space on $N$, and it has a discrete spectrum $\text{Spec}(-\Delta_g)\subset [0,\infty)$.  

When $\lambda>0$, $\lambda^2\notin \text{Spec}(-\Delta_g)$, the resolvent $(-\Delta_g-\lambda^2)^{-1}: L^2(N)\to L^2(N)$ is a bounded operator.  Motivated by \cite[Theorem 1.1, Theorem 3.3]{Lafontaine_Spence_Wunsch_2021}, we have the following result establishing a polynomial resolvent estimate for the 
Laplacian on $N$, valid for most frequencies. In what follows we let $|\cdot|$ stand for the Lebesgue measure on $\R$. 

\begin{thm}
\label{thm_resolvent}
Given $\delta>0$ and $\varepsilon>0$, there exist $C=C(N,g, \delta, \varepsilon)>0$ and a set $J=J(N,g, \delta,\varepsilon)\subset [1,\infty)$ with $|J|\le \delta$ such that 
\begin{equation}
\label{eq_2_2}
\|(-\Delta_g-\lambda^2)^{-1}\|_{\mathcal{L}(L^2(N), L^2(N))}\le C\lambda^{n+\varepsilon},
\end{equation}
for all $\lambda\in [1,\infty)\setminus J$.
\end{thm}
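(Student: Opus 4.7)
The plan is to derive the estimate directly from the self-adjoint spectral formula for the resolvent, combined with a rough form of Weyl's law. Let $\{\mu_j\}_{j\ge 0}$ denote the eigenvalues of $-\Delta_g$ on $N$, listed with multiplicity. By self-adjointness and the spectral theorem,
$$\|(-\Delta_g - \lambda^2)^{-1}\|_{\mathcal{L}(L^2(N), L^2(N))} = \frac{1}{\dist(\lambda^2,\,\text{Spec}(-\Delta_g))},$$
so the task reduces to bounding the distance from $\lambda^2$ to the spectrum from below, outside a set of frequencies of small measure.

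For a parameter $A>0$ to be chosen, I would define
$$J_A = \bigl\{\lambda \ge 1 : \dist(\lambda^2,\text{Spec}(-\Delta_g)) < A^{-1}\lambda^{-(n+\varepsilon)}\bigr\} = \bigcup_j J_{A,j},$$
with $J_{A,j} = \{\lambda \ge 1 : |\lambda^2 - \mu_j| < A^{-1}\lambda^{-(n+\varepsilon)}\}$. Outside $J_A$ the desired bound holds with $C = A$, so it suffices to show that $|J_A| \to 0$ as $A \to \infty$. For $\lambda\ge 1$ and $A$ large, the condition $|\lambda^2-\mu_j|<1/A$ rules out the finitely many small eigenvalues (which in any case contribute $O(1/A)$ to $|J_A|$), and for the remaining indices any $\lambda \in J_{A,j}$ satisfies $\lambda \in [\sqrt{\mu_j/2},\sqrt{2\mu_j}]$, hence $\lambda^{-(n+\varepsilon)} \le c\,\mu_j^{-(n+\varepsilon)/2}$. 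Thus $J_{A,j}$ is contained in the preimage under $\lambda\mapsto\lambda^2$ of an interval of length $\le c\,A^{-1}\mu_j^{-(n+\varepsilon)/2}$ centered at $\mu_j$; the Jacobian $d\lambda/dt \sim 1/(2\sqrt{\mu_j})$ then yields
$$|J_{A,j}| \le \frac{c}{A\,\mu_j^{(n+1+\varepsilon)/2}}.$$

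To sum these contributions, I would invoke the rough Weyl estimate $\#\{j : \mu_j\le R\}\le C(1+R^{n/2})$, equivalently $\mu_j \ge c\, j^{2/n}$ for $j\ge 1$. Since $(n+1+\varepsilon)/2 > n/2$, the series $\sum_j \mu_j^{-(n+1+\varepsilon)/2}$ converges, giving $|J_A| \le c'/A$. Choosing $A = c'/\delta$ then produces $|J_A| \le \delta$, and the estimate \eqref{eq_2_2} holds with $C = A$ on $[1,\infty)\setminus J_A$. There is no genuine obstacle in this argument; the only care required is the bookkeeping of the measure of each $J_{A,j}$ and the verification that the exponent $n+\varepsilon$ is chosen just large enough for the Weyl series to converge, with $\varepsilon$ providing convenient slack.
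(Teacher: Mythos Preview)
Your argument is correct and uses exactly the same two ingredients as the paper: the self-adjoint resolvent identity $\|(-\Delta_g-\lambda^2)^{-1}\| = \dist(\lambda^2,\mathrm{Spec}(-\Delta_g))^{-1}$ and a rough Weyl count. The only difference is organizational. The paper rescales semiclassically to the interval $[1,2]$, uses the Weyl bound in the form $\#(\mathrm{Spec}(-h^2\Delta_g)\cap[1,2])=O(h^{-n})$, and then patches things together via a dyadic decomposition $[1,\infty)=\bigcup_l[2^l,2^{l+1})$ of the $\lambda^2$-axis. You instead sum directly over the eigenvalues, invoking Weyl in the equivalent form $\mu_j\ge cj^{2/n}$ to make $\sum_j\mu_j^{-(n+1+\varepsilon)/2}$ converge. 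Your route is a touch more direct, since it avoids the semiclassical rescaling and the geometric series over dyadic shells; the paper's route has the minor advantage that it transplants verbatim to any self-adjoint operator with a known eigenvalue count on a fixed spectral window, without needing the lower bound $\mu_j\ge cj^{2/n}$ on individual eigenvalues.
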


\begin{proof}
We shall first consider the resolvent of the semiclassical Laplacian $-h^2\Delta_g$, $0<h\le 1$. When $z \notin \text{Spec}(-h^2\Delta_g)$, by the spectral theorem we have 
\begin{equation}
\label{eq_2_3}
\|(-h^2\Delta_g-z)^{-1}\|_{{\mathcal{L}(L^2(N), L^2(N))}}=\frac{1}{\text{dist}(z,\text{Spec}(-h^2\Delta_g))}.
\end{equation}
We shall first work on the frequency interval $[1,2]$. Let $r(h)>0$ be an arbitrary function of $h\in (0,1]$. Then it follows from \eqref{eq_2_3} that 
\[
\|(-h^2\Delta_g-z)^{-1}\|_{\mathcal{L}(L^2(N), L^2(N))}\le \frac{1}{r(h)},
\]
for all $z\in [1,2]\setminus J(h)$, where 
\[
J(h)=\{z\in [1,2]: \text{dist}(z,\text{Spec}(-h^2\Delta_g))<r(h)\}. 
\]
To bound the measure of the set $J(h)$, we observe that 
\begin{equation}
\label{eq_2_5}
J(h)\subset \bigcup_{z_j(h)\in \text{Spec}(-h^2\Delta_g)\cap [1,2]} (z_j(h)-r(h), z_j(h)+r(h)),
\end{equation}
where $z_j(h)$, $j=1,2,\dots$, stand for the eigenvalues of $-h^2\Delta_g$. It follows from the Weyl law that 
the number of the eigenvalues of $-h^2\Delta_g$ on the interval $[1,2]$, counting with multiplicities,  is  given by 
\begin{equation}
\label{eq_2_6}
\#(\text{Spec}(-h^2\Delta_g)\cap [1,2])=\mathcal{O}(h^{-n}),
\end{equation}
for all $0<h\le 1$, see \cite[Theorem 14.11]{Zworski_book}. Thus, it follows from \eqref{eq_2_5}, \eqref{eq_2_6} that
\[
|J(h)|\le  \sum_{z_j(h)\in \text{Spec}(-h^2\Delta_g)\cap [1,2]} 2r(h) \le \mathcal{O}(h^{-n})r(h),
\]
for all $0<h\le 1$. Letting $\delta'>0$ and $\varepsilon'>0$ be fixed to be  chosen later, we set 
\[
r(h)=\delta' h^{n+\varepsilon'}
\]
 so that 
\begin{equation}
\label{eq_2_7}
 |J(h)|\le \delta' \mathcal{O}(h^{\varepsilon'})\to 0,
\end{equation}
 as $h\to 0$.  Summarizing the discussion so far, we have shown that for all $0<h\le 1$, there exists a subset $J(h)\subset [1,2]$ satisfying \eqref{eq_2_7} such that  
\begin{equation}
\label{eq_2_8}
\|(-h^2\Delta_g-z)^{-1}\|_{{\mathcal{L}(L^2(N), L^2(N))}}\le (\delta')^{-1} h^{-n-\varepsilon'},
\end{equation}
 for all $z\in [1,2]\setminus J(h)$. 
 
Next, we shall obtain the bound \eqref{eq_2_2} for the nonsemiclassical resolvent
\[
(-\Delta_g-\lambda^2)^{-1}=h^2(-h^2\Delta_g-h^2\lambda^2)^{-1},
\]
for $\lambda\in [1,\infty)$, outside some set of small measure, containing the spectrum of $\sqrt{-\Delta_g}$. In doing so, we shall follow the proof of \cite[Theorem 3.3]{Lafontaine_Spence_Wunsch_2021}, and \cite[page 767]{Sjostrand_Zworski_1991}.   We write 
\[
[1,\infty)=\bigcup_{l=0}^\infty[2^l, 2^{l+1}).
\]
Now if $\lambda\in [1,\infty)$ then $\lambda^2\in [2^l, 2^{l+1})$ for some unique $l=0,1,2,\dots$, and therefore, $2^{-l}\lambda^2\in [1,2)\subset[1,2]$. Letting $h=2^{-l/2}\in (0,1]$, and using \eqref{eq_2_8}, we obtain that 
\begin{equation}
\label{eq_2_9}
\begin{aligned}
\|(-\Delta_g-&\lambda^2)^{-1}\|_{\mathcal{L}(L^2(N), L^2(N))}=h^2\|(-h^2\Delta_g-h^2\lambda^2)^{-1}\|_{\mathcal{L}(L^2(N), L^2(N))}\\
&\le (\delta')^{-1} h^{2-n-\varepsilon'}
\le (\delta')^{-1} 2^{l/2(n+\varepsilon'-2)}\le (\delta')^{-1} \lambda^{n-2+\varepsilon'},
\end{aligned}
\end{equation}
 for all $\lambda^2\in [2^l,2^{l+1})\setminus\tilde J_l$. Here  we let $\tilde J_l:= 2^lJ(2^{-l/2})\cap [2^l,2^{l+1})$.
We set 
\[
\tilde J:=\bigcup_{l=0}^\infty \tilde J_l
\]
so that the bound \eqref{eq_2_9} holds for all $\lambda^2\in [1,\infty)\setminus \tilde J$. Letting $\varepsilon>0$ be arbitrary fixed, choosing 
\begin{equation}
\label{eq_2_10}
\varepsilon'=2+\varepsilon,
\end{equation}
and using \eqref{eq_2_7},
 we get 
\begin{equation}
\label{eq_2_11}
|\tilde J|\le \sum_{l=0}^\infty 2^l |J(2^{-l/2})|\le \mathcal{O}(\delta')\sum_{l=0}^\infty \bigg(\frac{1}{2^{\varepsilon'/2-1}}\bigg)^l=\mathcal{O}(\delta')\sum_{l=0}^\infty \bigg(\frac{1}{2^{\varepsilon/2}}\bigg)^l\le \mathcal{O}_{\varepsilon}(\delta').
\end{equation}
Letting $\delta>0$ be arbitrary fixed, and choosing $\delta'=\delta'(\delta, \varepsilon)>0$ so that  $ \mathcal{O}_{\varepsilon}(\delta')\le 2\delta$, we get  from \eqref{eq_2_11} that 
\begin{equation}
\label{eq_2_12}
|\tilde J|\le 2\delta.
\end{equation}
It follows from \eqref{eq_2_9}, \eqref{eq_2_10}, and \eqref{eq_2_12} that 
\[
\|(-\Delta_g-\lambda^2)^{-1}\|_{L^2(N)\to L^2(N)}\le C\lambda^{n+\varepsilon},
\]
for all $\lambda^2\in [1,\infty)\setminus \tilde J$, where $C=C(\delta, \varepsilon)>0$ and $\tilde J$ satisfies \eqref{eq_2_12}. We define the set $J\subset [1,\infty)$ so that $\lambda\in [1,\infty)\setminus J$ if and only if $\lambda^2\in [1,\infty)\setminus \tilde J$.  Letting $\chi_J$ and $\chi_{\tilde J}$ be the characteristic functions of $J$ and $\tilde J$, respectively, we get 
\[
|\tilde J|=\int_{1}^\infty \chi_{\tilde J}(y)dy=\int_1^\infty \chi_{\tilde J}(x^2)2xdx=\int_1^\infty\chi_J(x)2xdx\ge 2|J|,
\]
and therefore, $|J|\le  |\tilde J|/2\le \delta$. 
\end{proof}

\begin{rem}
A result similar to Theorem \ref{thm_resolvent} is valid when $-\Delta_g$ is replaced by the Schr\"odinger operator $-\Delta_g+q$ with $q\in L^\infty(N;\R)$. We refer to \cite[Theorem 10.1, Remark 10.2]{Markus_book_1988} for the Weyl law for $-\Delta_g+q$.
\end{rem}

Let $H^s(N)$, $s\in\R$, be the standard Sobolev space on $N$, equipped with the norm
\[
\|u\|_{H^s(N)}=\|(1-\Delta_g)^{s/2}u\|_{L^2(N)}, 
\] 
where the Bessel potential $(1-\Delta_g)^{s/2}$ is defined by the self-adjoint functional calculus. Note that 
\begin{equation}
\label{eq_2_12_11}
\|u\|_{H^1(N)}^2=((1-\Delta_g)u,u)_{L^2(N)}=\|\nabla_g u\|_{L^2(N)}^2+\|u\|_{L^2(N)}^2.
\end{equation}
We have the following consequence of Theorem \ref{thm_resolvent}.
\begin{cor}
\label{cor_resolvent}
Given $\delta>0$ and $\varepsilon>0$, there exists $C=C(N,g, \delta, \varepsilon)>0$ and a set $J=J(N,g, \delta,\varepsilon)\subset [1,\infty)$ with $|J|\le \delta$ such that 
\[
\|(-\Delta_g-\lambda^2)^{-1}\|_{L^2(N)\to H^1(N)}\le C\lambda^{1+n+\varepsilon},
\]
for all $\lambda\in [1,\infty)\setminus J$.
\end{cor}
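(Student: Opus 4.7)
The plan is to upgrade the $L^2\to L^2$ resolvent estimate of Theorem \ref{thm_resolvent} to an $L^2\to H^1$ estimate by means of a standard energy identity, paying the price of one extra power of $\lambda$. I would choose the set $J\subset[1,\infty)$ and the constant $C$ from Theorem \ref{thm_resolvent} applied with the same $\delta$ and $\varepsilon$, so that for $\lambda\in[1,\infty)\setminus J$ the $L^2$-to-$L^2$ bound $\|(-\Delta_g-\lambda^2)^{-1}\|_{L^2\to L^2}\le C\lambda^{n+\varepsilon}$ holds. No new exceptional set will need to be introduced.

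Given $f\in L^2(N)$, I would set $u=(-\Delta_g-\lambda^2)^{-1}f$, so $-\Delta_g u=f+\lambda^2 u$. Testing this equation against $u$ and integrating by parts on the closed manifold $N$ gives
\[
\|\nabla_g u\|_{L^2(N)}^2=(-\Delta_g u, u)_{L^2(N)}=(f,u)_{L^2(N)}+\lambda^2\|u\|_{L^2(N)}^2,
\]
and by Cauchy--Schwarz together with $\lambda\ge 1$,
\[
\|\nabla_g u\|_{L^2(N)}^2\le \|f\|_{L^2(N)}\|u\|_{L^2(N)}+\lambda^2\|u\|_{L^2(N)}^2.
\]
Substituting the $L^2$ resolvent bound $\|u\|_{L^2(N)}\le C\lambda^{n+\varepsilon}\|f\|_{L^2(N)}$ and absorbing the lower-order term into the dominant one yields $\|\nabla_g u\|_{L^2(N)}\le C'\lambda^{1+n+\varepsilon}\|f\|_{L^2(N)}$ for $\lambda\ge 1$.

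Finally, using the identity \eqref{eq_2_12_11},
\[
\|u\|_{H^1(N)}^2=\|\nabla_g u\|_{L^2(N)}^2+\|u\|_{L^2(N)}^2\le C''\lambda^{2(1+n+\varepsilon)}\|f\|_{L^2(N)}^2,
\]
since the $\|\nabla_g u\|_{L^2}^2$ term dominates for $\lambda\ge 1$. Taking square roots gives exactly the claimed estimate, with $J$ and its measure bound inherited verbatim from Theorem \ref{thm_resolvent}. There is no real obstacle here; the only small point to watch is the bookkeeping of the $\lambda\ge 1$ assumption when absorbing the cross term $\|f\|_{L^2}\|u\|_{L^2}$ into $\lambda^2\|u\|_{L^2}^2$, which is immediate.
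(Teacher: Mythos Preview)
Your argument is correct and follows precisely the route indicated in the paper: multiply $-\Delta_g u-\lambda^2 u=f$ by $\overline{u}$, integrate by parts, invoke \eqref{eq_2_12_11}, and feed in the $L^2\to L^2$ bound from Theorem~\ref{thm_resolvent}. The paper states this in a single sentence; you have simply written out the details, including the bookkeeping that the cross term $\|f\|_{L^2}\|u\|_{L^2}$ is dominated by $\lambda^2\|u\|_{L^2}^2$ for $\lambda\ge 1$.
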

\begin{proof}
The result follows by multiplying the equation $-\Delta_g u-\lambda^2 u=f$ by $\overline{u}$,  integrating by parts, using \eqref{eq_2_12_11}, and Theorem \ref{thm_resolvent}. 
\end{proof}

Let $(M,g)$ be a smooth compact Riemannian manifold of dimension $n\ge 2$ with smooth boundary and let $\lambda\ge 1$. In the proofs of Theorem \ref{thm_main} and Theorem \ref{thm_main_2}, we shall construct complex geometric optics solutions to the Helmholtz equation  $-\Delta_g u-\lambda^2 u=0$ in $M^{\text{int}}$. In doing so, we shall need the following consequence of the resolvent estimates of Theorem \ref{thm_resolvent}. To state the result, for convenience, we use the semiclassical notation and write $h=\lambda^{-1}$. Assume, as we may, that $(M,g)$ is embedded in a compact smooth Riemannian manifold $(N,g)$ without boundary of the same dimension. Let $U\subset N$ be open such that $M\subset U$. 
Let $H^s(N)$, $s\in \R$, be the standard Sobolev space on $N$, equipped with the natural semiclassical norm, 
\begin{equation}
\label{eq_4_2}
\|u\|_{H^s_{\text{scl}}(N)}=\|J^s u\|_{L^2(N)},
\end{equation}
where the Bessel potential $J^s=(1-h^2\Delta_g)^{s/2}$ is defined by the self-adjoint functional calculus on $L^2(N)$. 
\begin{prop}
Let $s\in \R$. 
Given $\delta>0$ and $\varepsilon>0$, there exists $C=C(N,g, \delta, \varepsilon)>0$ and a set $J=J(N,g, \delta,\varepsilon)\subset [1,\infty)$ with $|J|\le \delta$ such that 
\begin{equation}
\label{eq_4_3}
\|u\|_{H^s_{\emph{\text{scl}}}(N)}\le  C h^{-2-n-\varepsilon} \|(-h^2\Delta_g-1)u\|_{H^s_{\emph{\text{scl}}}(N)}, \quad u\in C^\infty_0(M^{\text{int}}),
\end{equation}
for all $h>0$ small enough such that $h^{-1}\in [1,\infty)\setminus J$. 
\end{prop}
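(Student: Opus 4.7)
The plan is to derive the bound \eqref{eq_4_3} as a direct rescaled corollary of Theorem \ref{thm_resolvent}, with the extension to nonzero $s$ handled by the spectral theorem.

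For the case $s=0$, I set $\lambda = h^{-1}$ and factor
\[
-h^2\Delta_g - 1 = h^2(-\Delta_g - \lambda^2).
\]
Given $u \in C^\infty_0(M^{\text{int}})$, extended by zero to an element of $C^\infty_0(N)$, and letting $f = (-h^2\Delta_g - 1)u$, whenever $\lambda^2 \notin \text{Spec}(-\Delta_g)$ this factorization yields $u = h^{-2}(-\Delta_g - \lambda^2)^{-1}f$. Applying Theorem \ref{thm_resolvent} with the prescribed $\delta$ and $\varepsilon$ produces a set $J \subset [1,\infty)$ with $|J| \le \delta$, depending only on $N, g, \delta, \varepsilon$, such that $\|(-\Delta_g - \lambda^2)^{-1}\|_{\mathcal{L}(L^2(N), L^2(N))} \le C\lambda^{n+\varepsilon}$ for all $\lambda \in [1,\infty)\setminus J$. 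Multiplying by the prefactor $h^{-2}$ and noting $\lambda^{n+\varepsilon} = h^{-n-\varepsilon}$ gives
\[
\|u\|_{L^2(N)} \le Ch^{-2-n-\varepsilon}\|(-h^2\Delta_g - 1)u\|_{L^2(N)},
\]
which is the case $s=0$ of \eqref{eq_4_3}.

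For arbitrary $s \in \R$, I exploit the fact that $J^s = (1-h^2\Delta_g)^{s/2}$ and $-h^2\Delta_g - 1$ are both defined by self-adjoint functional calculus of $-h^2\Delta_g$, so they commute. Since $N$ is a closed manifold, the Bessel potential $J^s$ maps $C^\infty(N)$ into $C^\infty(N) \subset H^2(N)$, so $J^s u$ lies in the operator domain and the $L^2$ estimate may be applied to it. Using commutativity, this yields
\[
\|J^s u\|_{L^2(N)} \le Ch^{-2-n-\varepsilon}\|(-h^2\Delta_g - 1)J^s u\|_{L^2(N)} = Ch^{-2-n-\varepsilon}\|J^s(-h^2\Delta_g - 1)u\|_{L^2(N)},
\]
which, in view of the definition \eqref{eq_4_2} of the semiclassical Sobolev norm, is exactly \eqref{eq_4_3}. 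The excluded set $J$ and the constant $C$ depend only on $N, g, \delta, \varepsilon$ and not on $s$.

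The argument is essentially just bookkeeping atop Theorem \ref{thm_resolvent} and I do not anticipate a genuine obstacle. The one point worth spelling out is that $J^s u \in H^2(N)$ for $u \in C^\infty_0(M^{\text{int}})$ extended by zero, so that the resolvent identity makes sense; this follows from smoothness of the zero extension on $N$ together with the mapping properties of the Bessel potential on the closed manifold $N$.
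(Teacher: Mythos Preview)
Your argument is correct and in fact cleaner than the paper's. The paper follows the template of \cite{DKSaloU_2009} and first restricts the $L^2$ bound \eqref{eq_2_2} to $u\in C^\infty_0(U)$, then, because $J^s u$ need not have support in $U$, introduces a cutoff $\chi\in C^\infty_0(U)$ with $\chi=1$ near $M$, applies the $L^2$ estimate to $\chi J^s u$, and controls all the resulting error terms (e.g.\ $(1-\chi)J^s u$ and $[-h^2\Delta_g,\chi]J^s u$) via the pseudolocal estimate $(1-\chi)J^s\psi=\mathcal{O}(h^\infty)$ for semiclassical pseudodifferential operators. Your observation that the resolvent bound of Theorem~\ref{thm_resolvent} already holds on all of $L^2(N)$, and that $J^s$ commutes exactly with $-h^2\Delta_g-1$ by functional calculus, makes this detour unnecessary: one can apply the $L^2$ estimate directly to $J^s u\in C^\infty(N)$ and commute. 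The paper's route is the one to remember when the base a priori estimate genuinely requires compact support (as in Carleman-weight arguments), but here your shortcut suffices and yields a constant independent of $s$.
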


\begin{proof}
We shall follow \cite[Lemma 4.3]{DKSaloU_2009}, see also \cite[Proposition 2.2]{Krup_Uhlmann_nonlinear_mag_2018}, \cite[Lemma 3.2]{Ma_Sahoo_Salo_2022}. First it follows from Theorem \ref{thm_resolvent} that 
\begin{equation}
\label{eq_4_1}
\|u\|_{L^2(N)}\le  C h^{-2-n-\varepsilon}\|(-h^2\Delta_g-1)u\|_{L^2(N)}, \quad u\in C^\infty_0(U),
\end{equation}
for all $h>0$ such that $h^{-1}\in [1,\infty)\setminus J$. 

Now note that $J^s$ is a semiclassical pseudodifferential operator of order $s$ on $N$, i.e. $J^s\in \text{Op}_h(S^s(T^*N))$, see \cite[Proposition 16.1.1]{Sjostrand_book_2019}, and see \cite[Section 2]{Krup_Uhlmann_nonlinear_mag_2018} for the definition of the symbol class $S^s(T^*N)$ and  the standard $h$-quantization $\text{Op}_h(S^s(T^*N))$. Therefore, we have the following pseudolocal estimates: if $\psi,\chi\in C^\infty(N)$ with $\chi=1$ near $\text{supp}(\psi)$ and if $\alpha, \beta\in\R$ then 
\begin{equation}
\label{eq_4_4}
(1-\chi)J^s\psi=\mathcal{O}(h^\infty): H^\alpha_{\text{scl}}(N)\to H^\beta_{\text{scl}}(N).
\end{equation}
 all $0<h\le 1$. 
Let $\chi\in C_0^\infty(U)$ be such that $\chi=1$ near $M$. Then using \eqref{eq_4_1}, \eqref{eq_4_2},  and \eqref{eq_4_4}, we get for for all $h>0$ such that $h^{-1}\in [1,\infty)\setminus J$ and $u\in C^\infty_0(M^{\text{int}})$ that 
\begin{equation}
\label{eq_4_5}
\begin{aligned}
\|u\|_{H^s_{\text{scl}}(N)}&\le \|\chi J^s u\|_{L^2(N)}+\|(1-\chi) J^s u\|_{L^2(N)}\\
&\le C h^{-2-n-\varepsilon}\|(-h^2\Delta_g-1)(\chi J^s u)\|_{L^2(N)}+ \mathcal{O}(h^\infty)\|u\|_{H^s_{\text{scl}}(N)}\\
&\le C h^{-2-n-\varepsilon}\big(\|\chi  (-h^2\Delta_g-1)(J^s u)\|_{L^2(N)}+ \| [-h^2\Delta_g, \chi ](J^s u)\|_{L^2(N)}\big)\\
&+ \mathcal{O}(h^\infty)\|u\|_{H^s_{\text{scl}}(N)}\\
&\le C h^{-2-n-\varepsilon} \| (-h^2\Delta_g-1)(J^s u)\|_{L^2(N)}+\mathcal{O}(h^\infty)\|u\|_{H^s_{\text{scl}}(N)}\\
&\le C h^{-2-n-\varepsilon} \|J^s (-h^2\Delta_g-1) u\|_{L^2(N)} +\mathcal{O}(h^\infty)\|u\|_{H^s_{\text{scl}}(N)}\\
&\le C h^{-2-n-\varepsilon} \| (-h^2\Delta_g-1) u\|_{H^s_{\text{scl}}(N)} +\mathcal{O}(h^\infty)\|u\|_{H^s_{\text{scl}}(N)}.
\end{aligned}
\end{equation}
Here we have used that 
\[
\| [-h^2\Delta_g, \chi ](J^s u)\|_{L^2(N)}\le \mathcal{O}(h^\infty)\|u\|_{H^s_{\text{scl}}(N)},
\]
which is a consequence of  \eqref{eq_4_4}, and the fact that $[-h^2\Delta_g, J^s]=0$. Absorbing the error term $\mathcal{O}(h^\infty)\|u\|_{H^s_{\text{scl}}(N)}$ in the left hand side of \eqref{eq_4_5}, we get the estimate \eqref{eq_4_3} for $h$ small enough such that $h^{-1}\in [1,\infty)\setminus J$. 
\end{proof}

Using a standard argument, see \cite{DKSaloU_2009}, we convert the a priori estimate \eqref{eq_4_3}  into the following solvability result. 
\begin{prop}
\label{prop_solvability}
Let $s\in \R$, $\delta>0$ and $\varepsilon>0$. Then if $h>0$ is small enough such that $h^{-1}\in [1,\infty)\setminus J$, $|J|\le \delta$, then for any $v\in H^s(M^{\text{int}})$, there is a solution $u\in H^s(M^{\text{int}})$ of the equation
\[
(-h^2\Delta_g-1)u=v\quad \text{in}\quad M^{\text{int}},
\]
which satisfies 
\[
\|u\|_{H^s_{\text{scl}}(M^{\text{int}})}\le C h^{-2-n-\varepsilon}\|v\|_{H^s_{\text{scl}}(M^{\text{int}})}, \quad C=C(M,g,\delta, \varepsilon)>0.
\]
\end{prop}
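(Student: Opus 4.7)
The plan is to implement the standard Hahn--Banach duality argument (in the spirit of \cite{DKSaloU_2009}) which converts the a priori estimate \eqref{eq_4_3} into a solvability statement. First I would observe that the operator $P_h:=-h^2\Delta_g-1$ is formally self-adjoint with respect to the Riemannian $L^2$ pairing, so applying \eqref{eq_4_3} with $s$ replaced by $-s$ (which produces an exceptional set of the same type and measure) yields the transpose estimate
\begin{equation*}
\|\varphi\|_{H^{-s}_{\text{scl}}(N)}\le Ch^{-2-n-\varepsilon}\|P_h\varphi\|_{H^{-s}_{\text{scl}}(N)},\qquad \varphi\in C^\infty_0(M^{\text{int}}),
\end{equation*}
valid for all sufficiently small $h>0$ with $h^{-1}\in[1,\infty)\setminus J$.

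Next, given $v\in H^s(M^{\text{int}})$, I would extend it to some $\tilde v\in H^s_{\text{scl}}(N)$ with $\|\tilde v\|_{H^s_{\text{scl}}(N)}\le 2\|v\|_{H^s_{\text{scl}}(M^{\text{int}})}$, and introduce the subspace $E:=P_h(C^\infty_0(M^{\text{int}}))$ of $H^{-s}_{\text{scl}}(N)$. On $E$ I define the linear functional
\begin{equation*}
L(P_h\varphi):=\int_{M^{\text{int}}}\tilde v\,\varphi\,dV_g,\qquad \varphi\in C^\infty_0(M^{\text{int}}).
\end{equation*}
The transpose estimate shows that $L$ is well defined (since $P_h\varphi=0$ with $\varphi\in C^\infty_0(M^{\text{int}})$ forces $\varphi=0$), and furthermore
\begin{equation*}
|L(P_h\varphi)|\le \|\tilde v\|_{H^s_{\text{scl}}(N)}\|\varphi\|_{H^{-s}_{\text{scl}}(N)}\le Ch^{-2-n-\varepsilon}\|v\|_{H^s_{\text{scl}}(M^{\text{int}})}\|P_h\varphi\|_{H^{-s}_{\text{scl}}(N)},
\end{equation*}
so $L$ is a bounded linear functional on $E\subset H^{-s}_{\text{scl}}(N)$ with norm at most $Ch^{-2-n-\varepsilon}\|v\|_{H^s_{\text{scl}}(M^{\text{int}})}$.

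I would then invoke the Hahn--Banach theorem to extend $L$ to a bounded linear functional $\tilde L$ on all of $H^{-s}_{\text{scl}}(N)$ with the same norm bound. Using the identification $(H^{-s}_{\text{scl}}(N))^*\simeq H^s_{\text{scl}}(N)$ via the $L^2$ pairing, there is $u\in H^s_{\text{scl}}(N)$ with $\|u\|_{H^s_{\text{scl}}(N)}\le Ch^{-2-n-\varepsilon}\|v\|_{H^s_{\text{scl}}(M^{\text{int}})}$ representing $\tilde L$. Testing against $P_h\varphi$ with $\varphi\in C^\infty_0(M^{\text{int}})$ and using the formal self-adjointness of $P_h$ then gives $\langle u,P_h\varphi\rangle=\langle v,\varphi\rangle$, which is exactly $P_h u=v$ as distributions on $M^{\text{int}}$, so that $u|_{M^{\text{int}}}$ is the desired solution. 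The main point requiring care --- rather than a genuine obstacle --- will be the bookkeeping of the restriction/extension between $N$ and $M^{\text{int}}$ and the precise duality identification for the semiclassical Sobolev norms defined via $J^s=(1-h^2\Delta_g)^{s/2}$; but all of this is routine, and the resulting constants depend only on $M$, $g$, $\delta$, $\varepsilon$, matching the statement of the proposition.
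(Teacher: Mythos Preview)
Your proposal is correct and is precisely the standard Hahn--Banach duality argument the paper invokes by citing \cite{DKSaloU_2009}; the paper does not spell out a proof of this proposition, so your write-up is exactly the intended argument. Note also that the exceptional set $J$ in \eqref{eq_4_3} is independent of $s$ (it comes from the $L^2$ resolvent bound in Theorem~\ref{thm_resolvent}), so replacing $s$ by $-s$ does not alter $J$, confirming your remark on that point.
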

Here 
\[
H^s(M^{\text{int}})=\{V|_{M^{\text{int}}}: V\in H^s(N)\}, \quad s\in \R, 
\]
with the norm 
\[
\|v\|_{H^s_{\text{scl}}(M^{\text{int}})}=\inf_{V\in H^s(N),\, v=V|_{M^{\text{int}}}}\|V\|_{H^s_{\text{scl}}(N)}.
\]

\section{Proof of Theorem \ref{thm_main}}
\label{sec_proof_main}

\subsection{Gaussian beam quasimodes and solutions to Helmholtz equations}


Let $(M,g)$ be a smooth compact oriented Riemannian manifold of dimension $n\ge 2$ with smooth boundary. Let $\gamma: [0, T]\to M$, $0<T<\infty$, be a unit speed non-tangential geodesic on $M$. Here following \cite{DKurylevLS_2016}, we say that a unit speed geodesic $\gamma: [0, T]\to M$, $0<T<\infty$, is non-tangential if $\gamma(0), \gamma(T)\in \p M$, $\gamma(t)\in M^{\text{int}}$ for all $0<t<T$, and $\dot{\gamma}(0),\dot{\gamma}(T)$ are non-tangential vectors on $\p M$. 

We shall need the following well-known result concerning the construction of Gaussian beam quasimodes on $M$, localized to the non-tangential geodesic $\gamma$, see \cite{DKurylevLS_2016, Feizmohammadi_Oksanen_2020, LLLS_2021}. As in \cite{LLLS_2021}, it will be convenient to normalize our quasimodes in $L^4(M)$, as later we have to work with products of four such quasimodes. 

\begin{prop}
\label{prop_Gaussian_quasimodes}
For any $k\in \N\cup\{0\}$ and $R\ge 1$, there exist $N\in \N$ and a family of Gaussian beam quasimodes $v(\cdot; \lambda)\in C^\infty(M)$, $\lambda\ge 1$,  such that $\mathrm{supp}(v(\cdot;\lambda))$ is confined to a small neighborhood of $\gamma([0,T])$ and 
\begin{equation}
\label{eq_4_0_-1}
\|(-\Delta_g-\lambda^2)v(\cdot; \lambda)\|_{H^k(M^{\text{int}})}=\mathcal{O}(\lambda^{-R}),
\end{equation}
\[
\|v(\cdot;\lambda)\|_{L^4(M)}=\mathcal{O}(1), \quad \|v(\cdot;\lambda)\|_{L^\infty(M)}=\mathcal{O}(\lambda^{\frac{n-1}{8}}),
\]
as $\lambda\to \infty$. The local structure of the family $v(\cdot; \lambda)$ is as follows: let $p\in \gamma([0,T])$ and let $t_1<\cdots<t_{N_p}$ be the times in $[0, T]$ when $\gamma(t_l)=p$, $l=1,\dots, N_p$. In a sufficiently small neighborhood $U$ of a point $p$, we have
\[
v|_{U}=v^{(1)}+\cdots+v^{(N_p)},
\]
where each $v^{(l)}$ has the form 
\[
v^{(l)}(x;\lambda)=\lambda^{\frac{n-1}{8}} e^{i\lambda \varphi^{(l)}(x)}a^{(l)}(x;\lambda).
\]
Here $\varphi=\varphi^{(l)}\in C^\infty(\overline{U};\C)$ satisfies for $t$ near $t_l$,
\[
\varphi(\gamma(t))=t, \quad \nabla\varphi(\gamma(t))=\dot{\gamma}(t), \quad \emph{\text{Im}}(\nabla^2 \varphi(\gamma(t)))\ge 0, \quad \emph{\text{Im}}(\nabla^2\varphi)|_{\dot{\gamma}(t)^\perp}>0,
\]
and $a^{(l)}\in C^\infty(\overline{U})$ are of the form,
\[
a^{(l)}(\cdot; \lambda)=\bigg(\sum_{j=0}^N \lambda^{-j}a_j^{(l)}\bigg)\chi\bigg(\frac{y}{\delta'}\bigg),
\]
where $a^{(l)}_0(t,y)=a^{(l)}_{00}(t)+\mathcal{O}(|y|)$ with $a^{(l)}_{00}(t)\ne 0$ for all $t$. Here $(t,y)$ are the Fermi coordinates for $\gamma$ when $t$ near $t_l$, $\chi\in C^\infty_0(\R^{n-1})$ is such that $0\le \chi\le 1$, $\chi=1$ for $|y|\le 1/4$ and $\chi=0$ for $|y|\ge 1/2$, and $\delta'>0$ is a fixed number that can be taken arbitrarily small. 
\end{prop}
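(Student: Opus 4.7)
This is the standard Gaussian beam quasimode construction along a non-tangential geodesic, going back to work of Ralston and adapted to the setting of inverse problems in \cite{DKurylevLS_2016, Feizmohammadi_Oksanen_2020, LLLS_2021}. The plan is to first carry out the WKB/Gaussian beam construction \emph{locally} near each point of $\gamma$ in Fermi coordinates, then patch the local quasimodes together with a partition of unity along the geodesic, and finally sum over the (finitely many) times the geodesic passes through each self-intersection point to obtain the global form $v=v^{(1)}+\cdots+v^{(N_p)}$ announced in the statement.

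For the local construction, let me fix a point $p=\gamma(t_l)$ and Fermi coordinates $(t,y)\in (t_l-\epsilon,t_l+\epsilon)\times B(0,\delta')\subset \R\times \R^{n-1}$ such that $\gamma$ is the curve $\{y=0\}$ and the metric satisfies $g_{jk}(t,0)=\delta_{jk}$, $\partial_y g_{jk}(t,0)=0$. I would then look for $v^{(l)}=\lambda^{(n-1)/8}e^{i\lambda\varphi}a$ and expand
\begin{equation*}
e^{-i\lambda\varphi}(-\Delta_g-\lambda^2)(e^{i\lambda\varphi}a)=\lambda^2(|\nabla\varphi|_g^2-1)a-i\lambda(2\langle\nabla\varphi,\nabla a\rangle_g+(\Delta_g\varphi)a)-\Delta_g a.
\end{equation*}
Requiring the eikonal symbol $|\nabla\varphi|_g^2-1$ to vanish to infinite order at $y=0$ along $\gamma$ leads to an ODE for $\varphi(t,0)$ giving $\varphi(\gamma(t))=t$, and to a matrix Riccati equation along $\gamma$ for the Hessian $H(t)=\nabla^2\varphi(\gamma(t))$; the standard argument (via a linearization to a Jacobi-type system) produces a global solution on $[0,T]$ whose imaginary part remains positive definite on $\dot\gamma(t)^\perp$. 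Higher order Taylor coefficients of $\varphi$ in $y$ are then determined algebraically by differentiating the eikonal equation, giving a formal solution to any prescribed order.

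With $\varphi$ in hand, I would solve the transport equations order by order in $\lambda^{-1}$. Writing $a=\sum_{j=0}^N\lambda^{-j}a_j$, matching powers of $\lambda$ leads to a first-order transport equation for $a_0$ along $\gamma$ whose restriction to $y=0$ is an ODE $\dot a_{00}+\frac{1}{2}(\Delta_g\varphi)|_{\gamma}a_{00}=0$, solvable with $a_{00}\ne 0$, and the transverse Taylor coefficients are again determined algebraically; subsequent $a_j$ satisfy inhomogeneous versions of the same transport equation driven by $\Delta_g a_{j-1}$, all solvable to any prescribed transverse order. Choosing the Taylor order large in terms of $R$ and $k$ and truncating by the cutoff $\chi(y/\delta')$, one verifies, using $\mathrm{Im}\,\varphi\gtrsim |y|^2$ near $\gamma$, that the symbol $(-\Delta_g-\lambda^2)v^{(l)}$ decays faster than any prescribed power of $\lambda^{-1}$ in any Sobolev norm on $M^{\text{int}}$, which, after taking $N=N(k,R,n)$ large enough, gives \eqref{eq_4_0_-1}.

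Globally, I would cover $\gamma([0,T])$ by finitely many Fermi neighborhoods $U_l$ of segments $\gamma([t_l-\epsilon,t_l+\epsilon])$, and define $v$ by the sum over $l$ of the local quasimodes, after reducing $\delta'$ so that distinct normal tubes corresponding to different passages through a self-intersection point $p$ remain effectively disjoint in their principal support outside a neighborhood of $p$ itself; near $p$ the local tubes overlap but each term retains its own phase $\varphi^{(l)}$ and amplitude $a^{(l)}$, producing precisely the announced decomposition. For the norm bounds, since $|v^{(l)}|^4\lesssim \lambda^{(n-1)/2}e^{-4\lambda\,\mathrm{Im}\,\varphi^{(l)}}|a^{(l)}|^4$ and $\mathrm{Im}\,\varphi^{(l)}(t,y)\gtrsim |y|^2$, a Gaussian integral in $y$ (change of variables $y\mapsto \lambda^{-1/2}y$) gives $\|v^{(l)}\|_{L^4(M)}^4=O(1)$, which is precisely why the prefactor $\lambda^{(n-1)/8}$ is the correct normalization; the $L^\infty$ bound is immediate from the pointwise definition.

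The main technical obstacle is establishing the global Riccati solvability on all of $[0,T]$ with $\mathrm{Im}\,\nabla^2\varphi$ remaining positive on $\dot\gamma^\perp$; this is where the non-tangential hypothesis is comfortably used, since it ensures $\gamma$ stays in $M^{\text{int}}$ on the open interval and reaches $\partial M$ transversally at the endpoints, so the Fermi coordinate chart extends to a neighborhood of $\gamma([0,T])$ and the Riccati ODE has no finite-time blow-up thanks to the standard reduction to a linear Jacobi equation. Once the phases and amplitudes are constructed, the remaining estimates are a routine stationary-phase / Gaussian-integral computation.
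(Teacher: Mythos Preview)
The paper does not prove this proposition at all: it is stated as a well-known construction, with references to \cite{DKurylevLS_2016, Feizmohammadi_Oksanen_2020, LLLS_2021}, and the $L^4$-normalization is attributed to \cite{LLLS_2021}. Your sketch is an accurate summary of the standard Gaussian beam construction carried out in those references (Fermi coordinates, eikonal equation and matrix Riccati for the phase with $\mathrm{Im}\,H(t)|_{\dot\gamma^\perp}>0$ preserved via the Jacobi linearization, successive transport equations for the amplitude, cutoff in the transverse variable, and the Gaussian-integral computation for the $L^4$ bound), so there is nothing to correct.
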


Let $\lambda\ge 1$. Next, we shall construct complex geometric optics solutions to the Helmholtz equation 
\begin{equation}
\label{eq_4_0}
-\Delta_g u-\lambda^2 u=0\quad \text{in}\quad M^{\text{int}},
\end{equation}
based on the Gaussian beam quasimodes of Proposition \ref{prop_Gaussian_quasimodes}.  Specifically, we look for solutions to \eqref{eq_4_0} in the form, 
\[
u(\cdot;\lambda)=v(\cdot;\lambda)+r(\cdot;\lambda),
\]
where $v(\cdot;\lambda)$ is the Gaussian beam quasimode given in  Proposition \ref{prop_Gaussian_quasimodes}, and $r(\cdot;\lambda)$ is the remainder term. Hence, $u$ solves \eqref{eq_4_0} provided that $r$ satisfies the equation, 
\begin{equation}
\label{eq_4_6}
(-\Delta_g -\lambda^2)r=-(-\Delta_g -\lambda^2)v, \quad \text{in}\quad M^{\text{int}}.
\end{equation}
Let  $k\in \N$ and $R\ge 1$ be large. Letting  $h=\lambda^{-1}$,  it follows from Proposition \ref{prop_solvability} that for all $h>0$ small enough such that $h^{-1}\in [1,\infty)\setminus J$, $|J|\le \delta$, there is $r\in H^k(M^{\text{int}})$ satisfying \eqref{eq_4_6} such that 
\begin{equation}
\label{eq_4_7}
\|r\|_{H^k_{\text{scl}}(M^{\text{int}})}\le \mathcal{O}(h^{-2-n-\varepsilon})\|(-h^2\Delta_g-1)v\|_{H^k_{\text{scl}}(M^{\text{int}})}\le \mathcal{O}(h^{R-n-\varepsilon}).
\end{equation}
Here we have used \eqref{eq_4_0_-1}. 
Thus, for any $K$ and $k$, there is $R$ large enough such that \eqref{eq_4_7} gives that 
\[
\|r\|_{H^k(M^{\text{int}})}\le h^{-k}\|r\|_{H^k_{\text{scl}}(M^{\text{int}})}=\mathcal{O}(h^K).
\]
We summarize the discussion above in the following proposition.
\begin{prop}
\label{prop_solutions}
Let $k\in \N$,  $K\ge 1$, and $\delta>0$. Then for $\lambda\ge 1$ large enough such that $\lambda\in [1,\infty)\setminus J$, $|J|\le \delta$, there is $u=u(\cdot; \lambda)\in H^k(M^{\emph{\text{int}}})$ solving $(-\Delta_g-\lambda^2)u=0$ in $M^{\emph{\text{int}}}$ and having the form
\[
u(\cdot;\lambda)=v(\cdot;\lambda)+r(\cdot;\lambda),
\]
where $v(\cdot; \lambda)\in C^\infty(M)$ is the Gaussian beam quasimode given in  Proposition \ref{prop_Gaussian_quasimodes} and $r(\cdot;\lambda)\in H^k(M^{\text{int}})$ such that $\|r\|_{H^k(M^{\text{int}})}=\mathcal{O}(\lambda^{-K})$, as $\lambda\to \infty$.
\end{prop}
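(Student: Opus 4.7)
The plan is a direct ansatz approach. I would write $u = v + r$ with $v$ the Gaussian beam quasimode supplied by Proposition \ref{prop_Gaussian_quasimodes}, so that $u$ solves $(-\Delta_g - \lambda^2) u = 0$ if and only if the remainder $r$ satisfies the inhomogeneous equation
\[
(-\Delta_g - \lambda^2) r = -(-\Delta_g - \lambda^2) v \quad \text{in}\quad M^{\text{int}}.
\]
The right-hand side is already small in $H^k(M^{\text{int}})$ by the quasimode error bound in Proposition \ref{prop_Gaussian_quasimodes}, so only a controlled inversion of $-\Delta_g - \lambda^2$ is needed. All of the real work is carried by the solvability result of Proposition \ref{prop_solvability}, which already provides the exceptional set $J$ of measure at most $\delta$ for free.

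Next, I would pass to the semiclassical scaling $h = \lambda^{-1}$, multiply through by $h^2$, and rewrite the equation for the remainder as $(-h^2 \Delta_g - 1) r = -(-h^2 \Delta_g - 1) v$. Applying Proposition \ref{prop_solvability} with Sobolev index $s = k$ and a small fixed $\varepsilon > 0$ produces, for every $h$ with $h^{-1} \in [1,\infty) \setminus J$, a solution $r \in H^k(M^{\text{int}})$ satisfying
\[
\|r\|_{H^k_{\text{scl}}(M^{\text{int}})} \le C h^{-2 - n - \varepsilon} \|(-h^2 \Delta_g - 1) v\|_{H^k_{\text{scl}}(M^{\text{int}})}.
\]
Since $(-h^2 \Delta_g - 1) v = h^2 (-\Delta_g - \lambda^2) v$ and the semiclassical $H^k$ norm is dominated by the standard $H^k$ norm when $h \le 1$, the quasimode error estimate from Proposition \ref{prop_Gaussian_quasimodes} yields $\|(-h^2 \Delta_g - 1) v\|_{H^k_{\text{scl}}(M^{\text{int}})} = \mathcal{O}(h^{R+2})$, and hence $\|r\|_{H^k_{\text{scl}}(M^{\text{int}})} = \mathcal{O}(h^{R - n - \varepsilon})$.

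Finally, converting back to the standard norm through $\|r\|_{H^k(M^{\text{int}})} \le h^{-k} \|r\|_{H^k_{\text{scl}}(M^{\text{int}})}$ gives $\|r\|_{H^k(M^{\text{int}})} = \mathcal{O}(h^{R - n - k - \varepsilon})$. Given the target exponents $K$ and $k$, I would then select the quasimode order $R$ in Proposition \ref{prop_Gaussian_quasimodes} large enough, for instance $R \ge K + n + k + \varepsilon$, to guarantee $\|r\|_{H^k(M^{\text{int}})} = \mathcal{O}(\lambda^{-K})$, which completes the construction. No genuine obstacle arises in this step; the argument amounts to bookkeeping between the semiclassical and nonsemiclassical Sobolev scales, combined with a matching choice of $R$ in terms of $K$, $k$, $n$, and $\varepsilon$.
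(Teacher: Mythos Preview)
Your proposal is correct and follows essentially the same route as the paper: the ansatz $u=v+r$, the semiclassical rescaling $h=\lambda^{-1}$, the application of Proposition~\ref{prop_solvability} at regularity $s=k$, and the conversion $\|r\|_{H^k}\le h^{-k}\|r\|_{H^k_{\text{scl}}}$ followed by a choice of $R$ large in terms of $K,k,n,\varepsilon$ are exactly what the paper does. Your write-up is in fact slightly more explicit about the bookkeeping (the factor $h^2$ from $(-h^2\Delta_g-1)v$ and the precise threshold $R\ge K+n+k+\varepsilon$), but there is no substantive difference.
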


\begin{rem}
\label{rem_solutions}
Taking $k>n/2+3$ and using the Sobolev embedding $H^k(M^{\text{int}})\subset C^3(M)$, we conclude that $u\in C^3(M)$ with 
\[
\|r\|_{C^3(M)}=\mathcal{O}(\lambda^{-K}), 
\]
as $\lambda\to \infty$.
\end{rem}

\subsection{Main part of the proof of Theorem \ref{thm_main}}

Following \cite{Feizmohammadi_Oksanen_2020, LLLS_2021, Krup_Uhlmann_nonlinear_mag}, we start by performing the third order linearization of the problem \eqref{eq_1_1}  and the Dirichlet--to--Neumann map. To that end letting $\varepsilon=(\varepsilon_1,\varepsilon_2, \varepsilon_3)\in \C^3$ and $f_k\in C^{2,\alpha}(\p M)$, $k=1,2,3$, we  consider the Dirichlet problem
\begin{equation}
\label{eq_3_1}
\begin{cases}
-\Delta_g u_j-\lambda^2u_j+q_{j}(x)u_j^3=0 &\text{in}\quad M^{\text{int}},\\
u_j=\varepsilon_1f_1+\varepsilon_2f_2+ \varepsilon_3f_3& \text{on}\quad \p M,
\end{cases}
\end{equation} 
$j=1,2$. It follows from \cite[Theorem B.1]{Krup_Uhlmann_nonlinear_mag},  see also \cite[Proposition 1]{Feizmohammadi_Oksanen_2020} and \cite[Proposition 2.1]{LLLS_2021}, that for all $|\varepsilon|$ sufficiently small, the problem \eqref{eq_3_1} has a unique small solution $u_j=u_j(x,\varepsilon)$, which depends holomorphically on $\varepsilon\in \text{neigh}(0, \C^3)$. Differentiating \eqref{eq_3_1} with respect to $\varepsilon_l$, $l=1,2,3$, and using that $u_j(x,0)=0$, we get 
\begin{equation}
\label{eq_3_2}
\begin{cases}
-\Delta_g v_j^{(l)}-\lambda^2v_j^{(l)}=0 &\text{in}\quad M^{\text{int}},\\
v_j^{(l)}=f_l\quad \p M,
\end{cases}
\end{equation} 
where $v_j^{(l)}=\p_\varepsilon u_j|_{\varepsilon=0}$. By the uniqueness and elliptic regularity for \eqref{eq_3_2}, we have $v^{(l)}:=v_1^{(l)}=v^{(2)}_2\in C^{2,\alpha}(M)$, $l=1,2,3$, see  \cite[Theorem 6.15]{Gilbarg_Trudinger_book}. Applying $\p_{\varepsilon_1}\p_{\varepsilon_2}\p_{\varepsilon_3}|_{\varepsilon=0}$ to \eqref{eq_3_1}, we get the third order linearization, 
\begin{equation}
\label{eq_3_3}
\begin{cases}
-\Delta_g w_j-\lambda^2w_j+6q_{j}(x)v^{(1)}v^{(2)}v^{(3)}=0 &\text{in}\quad M^{\text{int}},\\
w_j=0& \text{on}\quad \p M,
\end{cases}
\end{equation} 
where $w_j:=\p_{\varepsilon_1}\p_{\varepsilon_2}\p_{\varepsilon_3}u_j|_{\varepsilon=0}$. 

The fact that for some $\lambda\in [1,\infty)\setminus J$ sufficiently large, we have $\Lambda_{q_1}^{\lambda}(\varepsilon_1f_1+\varepsilon_2f_2+ \varepsilon_3f_3)=\Lambda_{q_2}^{\lambda}(\varepsilon_1f_1+\varepsilon_2f_2+ \varepsilon_3f_3)$ for all small $\varepsilon$ and $f_j\in C^{2,\alpha}(\p M)$ yields that $\p_\nu u_1|_{\p M}=\p_\nu u_2|_{\p M}$. Therefore, $\p_\nu w_1|_{\p M}=\p_\nu w_2|_{\p M}$. Multiplying the difference of two equations in \eqref{eq_3_3} by $v^{(4)}\in C^{2,\alpha}(M)$ such that $-\Delta_g v^{(4)}-\lambda^2v^{(4)}=0$ in $M^{\text{int}}$, and using Green's formula, we get 
\begin{equation}
\label{eq_3_4}
\int_{M} p v^{(1)}v^{(2)}v^{(3)}v^{(4)}dV_g=0.
\end{equation}
Here $p=q_1-q_2$. Note that \eqref{eq_3_4} holds for all $v^{(l)}\in C^{2,\alpha}(M)$ such that $-\Delta_g v^{(l)}-\lambda^2v^{(l)}=0$ in $M^{\text{int}}$, $l=1,\dots, 4$. 

We shall next show that $p=0$. To that end,  let $x_0\in M$ be such that 
\begin{equation}
\label{eq_3_4_sup}
\sup_{x\in M}|p(x)|=|p(x_0)|.
\end{equation} 
Assume first that $x_0\in M^{\text{int}}$. By the condition (H), there are two non-tangential unit speed geodesics $\gamma:[0, T]\to M$ and $\eta:[0,S ]\to M$, $0<T, S<\infty$, such that $\gamma$ and $\eta$ do not have self-intersections at $x_0$ and $x_0$ is the only point of their intersections.

Furthermore, there is a constant $B > 0$ such that $p \in \mathcal{A}(B)$. Throughout the following discussion, it is important to note that all implicit constants might rely on several factors, such as the manifold $(M,g)$, the constant $B$ ensuring $p \in \mathcal{A}(B)$, the point $x_0$, and the specific geodesics $\gamma$ and $\eta$, including their lengths and the angle between them. Since in Theorem \ref{thm_main}, we simply assert that the frequency $\lambda_0$ is dependent on $(M,g)$ and $p$, without specifying the precise nature of this dependence, to avoid clutter, we will not explicitly state the dependence on these factors in the implicit constants mentioned below.

Using Proposition \ref{prop_solutions} and Remark \ref{rem_solutions},  for $\lambda\in [1,\infty)\setminus J$  large enough,  we have $v^{(l)} \in C^3(M)$ solving $-\Delta_g v^{(l)}-\lambda^2v^{(l)}=0$ in $M^{\text{int}}$, $l=1,2$, of the form 
\begin{equation}
\label{eq_3_5}
v^{(1)}=v+r_1, \quad v^{(2)}=w+r_2,
\end{equation}
where $v=v(\cdot;\lambda), w=w(\cdot;\lambda)\in C^\infty(M)$ are Gaussian beam quasimodes concentrating near the geodesics $\gamma$ and $\eta$, respectively, given in Proposition \ref{prop_Gaussian_quasimodes}, and 
\begin{equation}
\label{eq_3_6}
\|r_l\|_{L^\infty(M)}=\mathcal{O}(\lambda^{-K}),
\end{equation}
as $\lambda\to \infty$, $K\gg 1$. We set 
\begin{equation}
\label{eq_3_7}
v^{(3)}=\overline{v^{(1)}}, \quad v^{(4)}=\overline{v^{(2)}}.
\end{equation}
It follows from Proposition \ref{prop_Gaussian_quasimodes} that in a sufficiently small neighborhood of the point $x_0$, we have
\begin{equation}
\label{eq_3_8}
v(x;\lambda)=\lambda^{\frac{n-1}{8}} e^{i\lambda \varphi(x)}a(x;\lambda), \quad w(x;\lambda)=\lambda^{\frac{n-1}{8}} e^{i\lambda \psi(x)}b(x;\lambda), 
\end{equation}
where 
\begin{equation}
\label{eq_3_9}
\begin{aligned}
&\varphi(\gamma(t))=t, \quad \nabla\varphi(\gamma(t))=\dot{\gamma}(t), \quad \text{Im}(\nabla^2 \varphi(\gamma(t)))\ge 0, \quad \text{Im}(\nabla^2\varphi)|_{\dot{\gamma}(t)^\perp}>0,\\
&\psi(\eta(\tau))=\tau, \quad \nabla\psi(\eta(\tau))=\dot{\eta}(\tau), \quad \text{Im}(\nabla^2 \psi(\eta(\tau)))\ge 0, \quad \text{Im}(\nabla^2\psi)|_{\dot{\eta}(\tau)^\perp}>0,
\end{aligned}
\end{equation}
and 
\begin{equation}
\label{eq_3_10}
\begin{aligned}
&a(t,y; \lambda)=\sum_{j=0}^N \lambda^{-j}\tilde a_j(t,y), \quad \tilde a_j(t,y)=a_j(t,y) \chi\bigg(\frac{y}{\delta'}\bigg),\\ &b(\tau,z; \lambda)=\sum_{j=0}^N \lambda^{-j}\tilde b_j(\tau,z), \quad \tilde b_j(\tau,z) =b_j(\tau,z) \chi\bigg(\frac{z}{\delta'}\bigg).
\end{aligned}
\end{equation}
Here 
\begin{align*}
&a_0(t,y)=a_{00}(t)+\mathcal{O}(|y|), \quad a_{00}(t)\ne 0, \quad \forall t, \\
&b_0(\tau,z)=b_{00}(\tau)+\mathcal{O}(|z|), \quad b_{00}(\tau)\ne 0, \quad \forall \tau, 
\end{align*}
 $(t,y)$ and $(\tau, z)$ are the Fermi coordinates for $\gamma$ and $\eta$, $\chi\in C^\infty_0(\R^{n-1})$ is such that $0\le \chi\le 1$, $\chi=1$ for $|y|\le 1/4$ and $\chi=0$ for $|y|\ge 1/2$, and $\delta'>0$ is a fixed number that can be taken arbitrarily small. We also have
\begin{equation}
\label{eq_3_11}
\begin{aligned}
&\|v\|_{L^4(M)}=\mathcal{O}(1), \quad \|w\|_{L^4(M)}=\mathcal{O}(1), \\
& \|v\|_{L^\infty(M)}=\mathcal{O}(\lambda^{\frac{n-1}{8}}), \quad \|w\|_{L^\infty(M)}=\mathcal{O}(\lambda^{\frac{n-1}{8}}), 
 \end{aligned}
\end{equation}
as $\lambda\to \infty$. 
Now  thanks to the bounds \eqref{eq_3_6},  \eqref{eq_3_11},  for any $L>0$, we can find $K$ large so that we have 
\begin{equation}
\label{eq_3_12}
v^{(1)}v^{(2)}v^{(3)}v^{(4)}=|v|^2|w|^2+R,
\end{equation}
where 
\begin{equation}
\label{eq_3_13}
\|R\|_{L^\infty(M)}=\mathcal{O}(\lambda^{-L}).
\end{equation}
  Substituting \eqref{eq_3_12} into the integral identity \eqref{eq_3_4}, and using \eqref{eq_3_13}, we get 
\begin{equation}
\label{eq_3_14}
\bigg|\int_M p |v|^2|w|^2 dV_g \bigg|\le \mathcal{O}(\lambda^{-L})\|p\|_{L^\infty(M)}.
\end{equation}

Now recall that $x_0$ is the only intersection point of the geodesics $\gamma$ and $\eta$. Let $\tilde \delta>0$. Taking $\delta'>0$ in \eqref{eq_3_10} to be sufficiently small but fixed, we could achieve that the support of the product $|v|^2|w|^2$ is contained in an open geodesic ball $B_{\tilde \delta}(x_0)$ in $M$ of radius $\tilde \delta$ centered at $x_0$. In $B_{\tilde \delta}(x_0)$,  using \eqref{eq_3_8} and \eqref{eq_3_10}, we get 
\begin{equation}
\label{eq_3_15}
|v|^2|w|^2=\lambda^{\frac{n-1}{2}}e^{-2\lambda(\text{Im}\, \varphi+\text{Im}\, \psi)}\big(|\tilde a_0|^2|\tilde b_0|^2+\mathcal{O}_{L^\infty(B_{\tilde \delta}(x_0))}\big(\lambda^{-1}\big)\big). 
\end{equation}

Letting 
\begin{equation}
\label{eq_3_16}
\Psi=2(\text{Im}\, \varphi+\text{Im}\, \psi)\ge 0, 
\end{equation}
we have from \eqref{eq_3_16} and \eqref{eq_3_9} that 
\begin{equation}
\label{eq_3_18}
\Psi(x_0)=0, \quad d\Psi(x_0)=0, \quad \nabla^2\Psi(x_0)>0.
\end{equation}
The latter inequality follows from the fact that the Hessians of $\text{Im}\, \varphi$ and $\text{Im}\, \psi$ at $x_0$ are positive semidefinite and positive definite in the directions orthogonal to $\gamma$ and $\eta$, respectively. 
If $z=(z_1,\dots,z_n)$ are normal coordinates centered at $x_0$, we have 
\[
\Psi(z)=\frac{1}{2}\Psi''(0)z\cdot z+\mathcal{O}(|z|^3),
\] 
where the Hessian $\Psi''(0)>0$. Thus, by choosing $\tilde \delta$ small enough, we get 
\begin{equation}
\label{eq_3_18_lower_bound}
\Psi(z)\ge c|z|^2 \quad \text{in}\quad B_{\tilde \delta}(x_0),
\end{equation}
for some $c>0$.

Substituting \eqref{eq_3_15} into \eqref{eq_3_14}, we obtain that
\begin{equation}
\label{eq_3_17}
\begin{aligned}
\bigg|\lambda^{\frac{n-1}{2}} \int_{B_{\tilde \delta}(x_0)} p  e^{-\lambda \Psi} |\tilde a_0|^2|\tilde b_0|^2  dV_g \bigg|\le &\mathcal{O}(\lambda^{-L})\|p\|_{L^\infty(M)}
\\
&+\mathcal{O}(\lambda^{\frac{n-1}{2}-1})\|p\|_{L^\infty(M)}\int_{B_{\tilde \delta}(x_0)} e^{-\lambda \Psi} dV_g.
\end{aligned}
\end{equation}
Writing the integral in the normal coordinates, using  $dV_g (z)=|g(z)|^{1/2}dz$ and \eqref{eq_3_18_lower_bound}, we get
\begin{equation}
\label{eq_3_17_new}
\int_{B_{\tilde \delta}(x_0)} e^{-\lambda \Psi} dV_g\le \mathcal{O}(1)  \int_{\R^n} e^{-\lambda c|z|^2} dz=\mathcal{O}(\lambda^{-n/2}).
\end{equation}
Thus, fixing 
\begin{equation}
\label{eq_3_17_value_L}
L= 3/2,
\end{equation}
 we obtain from \eqref{eq_3_17} and \eqref{eq_3_17_new} that 
\begin{equation}
\label{eq_3_19}
\begin{aligned}
\bigg|\lambda^{\frac{n}{2}} \int_{B_{\tilde \delta}(x_0)} p  e^{-\lambda \Psi} |\tilde a_0|^2|\tilde b_0|^2  dV_g \bigg|\le \mathcal{O}(\lambda^{-1})\|p\|_{L^\infty(M)}.
\end{aligned}
\end{equation}

Next, we shall use the rough stationary phase Lemma \ref{lem_stationary_phase} in the integral in \eqref{eq_3_19}. To that end, we shall write the integral in the normal coordinates $z$ centered at $x_0$, and redefine $\tilde \delta>0$ to be smaller if necessary. We get 
\begin{equation}
\label{eq_3_19_new}
\begin{aligned}
&\lambda^{\frac{n}{2}} \int_{B_{\tilde \delta}(x_0)} p  e^{-\lambda \Psi} |\tilde a_0|^2|\tilde b_0|^2  dV_g= 
\lambda^{\frac{n}{2}} \int_{B_{\tilde\delta}(0)} p(z)  e^{-\lambda \Psi(z)} |\tilde a_0(z)|^2|\tilde b_0(z)|^2 |g(z)|^{1/2} dz \\
&= \frac{(2\pi)^{\frac{n}{2}}}{(\det \nabla^2\Psi(x_0))^{1/2}}|p(x_0)| |a_{0}(x_0)|^2|b_{0}(x_0)|^2+\mathcal{O}(1) \lambda^{-\frac{\alpha}{2}}\|p\|_{C^{0,\alpha}(M)}.
\end{aligned}
\end{equation}
Here we have also used \eqref{eq_app_Holder_ineq} and the fact that $|g(0)|=1$ in the normal coordinates.  Combining \eqref{eq_3_19} and \eqref{eq_3_19_new}, we obtain that 
\begin{equation}
\label{eq_3_20_new}
\frac{1}{(\det \nabla^2\Psi(x_0))^{1/2}}|p(x_0)| |a_{0}(x_0)|^2|b_{0}(x_0)|^2 \le \mathcal{O}(\lambda^{-\alpha/2})\|p\|_{C^{0,\alpha}(M)}. 
\end{equation}
Therefore, using that $ a_{0}(x_0)\ne 0$, and  $b_{0}(x_0)\ne 0$, 
we get from \eqref{eq_3_20_new} that 
\begin{equation}
\label{eq_3_20}
|p(x_0)|\le  \mathcal{O}(\lambda^{-\alpha/2})\|p\|_{C^{0,\alpha}(M)},
\end{equation} 
for $\lambda\in [1,\infty)\setminus J$ large enough, satisfying the assumption (A). 

It follows from \eqref{eq_3_4_sup} and \eqref{eq_3_20} that 
\begin{equation}
\label{eq_3_21}
\|p\|_{L^\infty(M)}\le \mathcal{O}(\lambda^{-\alpha/2})\|p\|_{C^{0,\alpha}(M)}. 
\end{equation}
 Since $p\in \mathcal{A}(B)$, get from \eqref{eq_3_21} that 
\begin{equation}
\label{eq_3_22}
\|p\|_{L^\infty(M)}\le B\mathcal{O}(\lambda^{-\alpha/2}) \|p\|_{L^\infty(M)}. 
\end{equation}
Now choosing $\lambda\in [1,\infty)\setminus J$ large enough, satisfying the assumption (A) and depending on $B$,  we conclude from \eqref{eq_3_22} that $p=0$ in $M$. 

If $x_0\in \p M$ then \eqref{eq_3_4} and Proposition \ref{prop_app_boundary_deter} imply that $p(x_0)=0$, and therefore, \eqref{eq_3_4_sup}  yields that $p=0$ in $M$. This completes the proof of Theorem \ref{thm_main}.

\section{Proof of Theorem \ref{thm_main_2}}
\label{sec_proof_main_2}

\subsection{Gaussian beam quasimodes and solutions to Helmholtz equations with uniform constants}

\label{sec_solutions_Helmholtz_uniform_constants}

Let $(M,g)$ be a smooth compact oriented Riemannian manifold of dimension $n\ge 2$ with smooth boundary, and let $T_xM$ be the tangent space at the point $x\in M$. We write $\langle\cdot,\cdot\rangle= \langle\cdot,\cdot\rangle_g=g(\cdot,\cdot)$ for the $g$--inner product for tangent vectors and $|\cdot|=|\cdot|_g=\sqrt{\langle\cdot,\cdot\rangle_g}$ for the norm of tangent vectors. The unit sphere bundle $SM$ of $M$ is defined as 
\[
SM=\cup_{x\in M}S_x M, \quad S_xM=\{(x,w)\in T_xM: |w|_g=1\}. 
\] 
The boundary of $SM$ is given by $\p(SM)=\{(x,w)\in SM: x\in \p M\}$.  It is the union of the sets of inward and outward pointing vectors, 
\[
\p_{\pm} SM=\{(x,w)\in \p (SM): \pm \langle w, \nu \rangle \le 0\}, 
\]
where $\nu$ is the outward unit normal to the boundary of $M$. 

Let $(x,w)\in \p_+SM$, and let $\gamma=\gamma_{x,w}(t):[0, \tau(x,w)]\to M$ be the maximally extended unit speed geodesic such that $\gamma(0)=x$ and $\dot{\gamma}(0)=w$. 

Let $T>0$ be fixed. Following   \cite[Theorem 6.2]{Ma_Sahoo_Salo_2022}, we allow the manifold $(M,g)$ to have trapped geodesics, i.e. $\tau(x,w)$ maybe $+\infty$ for some $(x,w)$, but we shall only work with $(x,w)\in \mathcal{G}_T$ where 
\[
\mathcal{G}_T=\{(x,w)\in \p_+SM: \tau(x,w)\le T\}. 
\]
Note that $ \tau(x,w)$ is the length of the geodesic $\gamma_{x,w}$.

We will utilize a recently established result, see \cite[Theorem 6.2]{Ma_Sahoo_Salo_2022}, concerning the construction of Gaussian beam quasimodes on $M$, localized to $\gamma_{x,w}$, with uniform bounds over $(x,w)\in \mathcal{G}_T$. 
\begin{thm}
\label{thm_Gaussian_quasimodes_uniform}
Let $T>0$,  $k\in \N\cup \{0\}$, and $R\ge 0$ be given. There are constants $C=C(M,g,T, k,R)>0$, $\tilde C=\tilde C(M,g,T, k,R)>0$, and $N=N(M,g,T, k,R)\in \N$ such that for any $(x,w)\in \mathcal{G}_T$, there exists a family of Gaussian beam quasimodes  $v=v(\cdot;\lambda)=v_{x,w}(\cdot; \lambda)\in C^\infty(M)$, $\lambda\ge 1$, associated to $\gamma=\gamma_{x,w}$, and satisfying 
\begin{equation}
\label{eq_4_0_-1_uniform}
\|(-\Delta_g-\lambda^2)v(\cdot; \lambda)\|_{H^k(M^{\text{int}})}\le C\lambda^{-R},
\end{equation}
\[
 \|v(\cdot;\lambda)\|_{L^4(M)}\le C, \quad \|v(\cdot;\lambda)\|_{L^\infty(M)}\le C\lambda^{\frac{n-1}{8}},
\]
as $\lambda\to \infty$. There is also a symmetric complex $(1,1)$-tensor $H(t)=H_{x,w}(t)$ on $T_{\gamma(t)}M$, depending smoothly on $t\in [0,\tau(x,w)]$ and satisfying 
\[
\emph{\text{Im}}\,(H(t)^\flat)\ge 0,\quad \emph{\text{Im}}\,(H(t)^\flat)|_{\dot{\gamma}(t)^\perp}\ge {\tilde C}^{-1}g.
\]
The local structure of the family $v(\cdot; \lambda)$ is as follows: if $p\in \gamma([0,\tau(x,w)])$ and $t_1<\cdots<t_{N_p}$ are the times in $[0, \tau(x,w)]$ when $\gamma(t_l)=p$, $l=1,\dots, N_p$, then in a small neighborhood $U$ of $p$, we have
\[
v|_{U}=v^{(1)}+\cdots+v^{(N_p)},
\]
where each $v^{(l)}$ has the form 
\[
v^{(l)}(x;\lambda)=\lambda^{\frac{n-1}{8}} e^{i \lambda\varphi^{(l)}(x)}a^{(l)}(x;\lambda).
\]
Here $\varphi=\varphi^{(l)}\in C^\infty(\overline{U};\C)$ satisfies for $t$ near $t_l$,
\begin{equation}
\label{eq_4_0_-1_phase_l}
\varphi(\gamma(t))=t, \quad \nabla\varphi(\gamma(t))=\dot{\gamma}(t), \quad \nabla^2 \varphi(\gamma(t))= H(t)^\flat, \quad  \|\varphi\|_{C^{k}(\overline{U})}\le \tilde C,
\end{equation}
and $a^{(l)}\in C^\infty(\overline{U})$ are of the form,
\begin{equation}
\label{eq_4_0_-1_a_l}
a^{(l)}(\cdot; \lambda)=\bigg(\sum_{j=0}^N \lambda^{-j}a_j^{(l)}\bigg)\rho,
\end{equation}
where $\rho$ is a smooth cutoff function supported near $\gamma$ when $t$ is near $t_l$, and 
\begin{equation}
\label{eq_4_0_-1_a00}
a^{(l)}_{0}(\gamma(t))=\exp\bigg[-\frac{1}{2}\int_0^t \emph{\text{tr}}_g(H(s))ds\bigg]. 
\end{equation}
One also has $\|a_j^{(l)}\|_{C^{k}(\overline{U})}\le \tilde C$,  $l=1,\dots, N_p, \quad j=0,\dots, N$.
\end{thm}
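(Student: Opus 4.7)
The plan is to construct the family $v_{x,w}$ by a standard WKB ansatz $v = e^{i\lambda \varphi} a$ fibrewise along each geodesic $\gamma_{x,w}$, and then upgrade the construction so that all constants are uniform in $(x,w) \in \mathcal{G}_T$ via compactness. First I would note that $\mathcal{G}_T$ is a compact subset of $\partial_+ SM$: the exit time $\tau$ is lower semicontinuous on $\partial_+ SM$, so $\{\tau \le T\}$ is closed, and $\partial_+ SM$ is already compact. This compactness will ultimately convert pointwise continuity of the construction in $(x,w)$ into uniform bounds.

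The phase $\varphi$ is determined on $\gamma$ by the conditions $\varphi(\gamma(t)) = t$ and $\nabla\varphi(\gamma(t)) = \dot\gamma(t)$, which already make the eikonal equation $|\nabla\varphi|_g^2 = 1$ vanish to first order along $\gamma$. Differentiating the eikonal twice and restricting to $\gamma$ yields the invariant complex matrix Riccati equation
\begin{equation*}
\nabla_t H + H \circ H + \mathcal{R}(\dot\gamma) = 0,
\end{equation*}
for the $(1,1)$-tensor $H(t)$ on $T_{\gamma(t)}M$ that prescribes $\nabla^2\varphi(\gamma(t)) = H(t)^\flat$, where $\mathcal{R}(\dot\gamma)Y = R(Y,\dot\gamma)\dot\gamma$. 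Prescribing $H(0)$ with $\mathrm{Im}\,H(0)^\flat$ vanishing on $\dot\gamma(0)$ and strictly positive on $\dot\gamma(0)^\perp$ (e.g.\ $i$ times the orthogonal projection onto $\dot\gamma(0)^\perp$), the standard theory of complex matrix Riccati equations ensures a global smooth solution $H(t)$ on $[0,\tau(x,w)]$ with $\mathrm{Im}\,H(t)^\flat \ge 0$ and $\mathrm{Im}\,H(t)^\flat|_{\dot\gamma(t)^\perp} > 0$; the positivity is preserved precisely because the imaginary part satisfies a linear Lyapunov-type equation driven by $\mathrm{Re}\,H$. The amplitudes $a_j^{(l)}$ are then obtained by solving first order linear transport ODEs along $\gamma$, the leading one giving the invariant formula \eqref{eq_4_0_-1_a00} via integration of $\mathrm{tr}_g H$. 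I would extend $\varphi$ and each $a_j^{(l)}$ off $\gamma$ by Taylor expansion in Fermi coordinates to order depending on $k, R, N$, multiply by a cutoff $\rho$ supported in a tubular neighborhood of $\gamma$, and sum $v = v^{(1)} + \cdots + v^{(N_p)}$ over the self-intersection times at each point.

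A standard verification then gives $(-\Delta_g - \lambda^2) v = O(\lambda^{-R})$ in $H^k$ once $N$ is chosen large enough in terms of $k, R$. The $L^4$ and $L^\infty$ bounds follow from the Gaussian concentration: the strict positivity $\mathrm{Im}\,H^\flat|_{\dot\gamma^\perp} \ge \tilde C^{-1} g$ forces $|v(t,y)| \lesssim \lambda^{(n-1)/8} e^{-c\lambda|y|^2}$ in Fermi coordinates, so the mass localises to a tube of transverse radius $\sim \lambda^{-1/2}$ and volume $\sim \lambda^{-(n-1)/2}$, giving $\|v\|_{L^4(M)} = O(1)$ and $\|v\|_{L^\infty(M)} = O(\lambda^{(n-1)/8})$. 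Uniformity in $(x,w) \in \mathcal{G}_T$ then comes from three observations: the Riccati flow depends continuously on the geodesic data, the transport ODEs depend continuously on $(x,w)$ and on $H$, and the Fermi patches and cutoffs can be chosen to depend continuously on $(x,w)$ once one fixes a finite open cover of $M$ by coordinate charts; taking suprema of all the resulting constants over the compact set $\mathcal{G}_T$ produces $C$ and $\tilde C$.

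The main obstacle I expect is the uniform lower bound $\mathrm{Im}\,H(t)^\flat|_{\dot\gamma(t)^\perp} \ge \tilde C^{-1} g$ on all of $[0,\tau(x,w)]$, simultaneously for all $(x,w) \in \mathcal{G}_T$. While the complex Riccati solution exists globally and its imaginary part is pointwise positive, controlling it from below by a fixed multiple of $g$ requires using both the uniform initial lower bound and uniform continuity of the Riccati flow, together with compactness of $\mathcal{G}_T$. A related technical issue is that the self-intersection count $N_p$ and the pattern of return times $t_1 < \cdots < t_{N_p}$ can vary discontinuously with $(x,w)$; handling the transitions between different intersection structures (so that the local sums $v^{(1)} + \cdots + v^{(N_p)}$ assemble into a smooth global quasimode with uniform constants) requires an invariant formulation of the ansatz that does not privilege any particular Fermi coordinate system, which is exactly the content of the statement.
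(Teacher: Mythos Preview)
The paper does not prove this theorem; it is quoted verbatim from \cite[Theorem 6.2]{Ma_Sahoo_Salo_2022} and used as a black box. So there is no ``paper's own proof'' to compare against here, only the remarks the authors make about that proof (Remarks 4.2--4.4 and the discussion in Section~4.2).

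That said, your sketch is faithful to the standard construction and to what the paper reveals about the cited proof. The invariant Riccati equation for $H(t)$, the transport solution giving \eqref{eq_4_0_-1_a00}, the Fermi-coordinate cutoff $\rho(t,y)=\chi(|y|/\delta_1)$ (cf.\ Remark~4.2), and the compactness of $\mathcal{G}_T$ supplying uniformity are exactly the ingredients. You also correctly isolate the two genuine difficulties: the uniform lower bound $\mathrm{Im}\,H(t)^\flat|_{\dot\gamma(t)^\perp}\ge \tilde C^{-1}g$ over all $(x,w)\in\mathcal{G}_T$, and the fact that the self-intersection pattern $t_1<\cdots<t_{N_p}$ jumps discontinuously as $(x,w)$ varies. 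The paper's Remark~4.3 singles out a further subtlety that you do not flag: the constant $\tilde C$ controlling $\|\varphi\|_{C^k}$, $\|a_j^{(l)}\|_{C^k}$, and the lower bound on $\mathrm{Im}\,H$ must be arranged to be \emph{independent of the cutoff width} $\delta_1$, whereas $C$ is allowed to depend on $\delta_1$. This decoupling is essential in Section~4.2 (see \eqref{eq_7_5_new_500_1}), and it does not fall out of a generic compactness argument---one needs to construct $\varphi$ and $a_j$ first on a fixed tubular scale and only afterwards shrink the support via $\rho$. Your proposal as written would likely produce a $\tilde C$ that depends on $\delta_1$, which is not sufficient for the application.
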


\begin{rem}
\label{rem_choice_rho}
According to the proof presented in \cite[Theorem 6.2]{Ma_Sahoo_Salo_2022}, the cutoff function $\rho=\rho^{(l)}$ used in the amplitude $a^{(l)}$ in \eqref{eq_4_0_-1_a_l} is selected as $\rho(t,y)=\chi(|y|/\delta_1)$, where $\delta_1=\delta_1(M,g,T, k, R)>0$ is small, see \cite[formula (6.6) and discussion before it]{Ma_Sahoo_Salo_2022}. Here, $(t,y)$ are the Fermi coordinates for $\gamma$ when $t$ near  $t_l$, and  the function $\chi$ is fixed and belongs to the space $C^\infty_0(\R)$, satisfying the properties: $0\leq \chi\leq 1$, $\chi(t)=1$ for $|t|\leq 1/2$, and $\chi(t)=0$ for $|t|\geq 2/3$.
\end{rem}

\begin{rem}  While the constant $C$ in Theorem \ref{thm_Gaussian_quasimodes_uniform} depends on $\delta_1$, the constants $\tilde C$ and $N$ are independent of $\delta_1$. The independence of $\tilde C$ of $\delta_1$ will be crucial in the proof of Theorem \ref{thm_main_2} below.
\end{rem}

\begin{rem}
If $(M,g)$ is non-trapping, i.e. $\tau(x,w)<\infty$ for all $(x,w)\in \p_+SM$, then $\mathcal{G}_T=\p_+SM$ for sufficiently large $T$, see  \cite[Remark 6.1]{Ma_Sahoo_Salo_2022}.
\end{rem}

Let  $K\ge 1$. Through a similar line of reasoning as in establishing Proposition \ref{prop_solutions} and Remark \ref{rem_solutions}, fixing $\varepsilon$ in Proposition \ref{prop_solvability} and $k>n/2+3$, and letting $R=K+k+n+\varepsilon$, we arrive at the following result regarding the construction of complex geometric optics solutions for the Helmholtz equation with uniform constants based on the Gaussian beam quasimodes of Theorem \ref{thm_Gaussian_quasimodes_uniform}.

\begin{prop}
\label{prop_solutions_uniform}
Let  $K\ge 1$ and $\delta>0$. Then for $\lambda\in [1,\infty)\setminus J$ large enough, $|J|\le \delta$, there is $u=u(\cdot; \lambda)\in C^3(M)$ solving $(-\Delta_g-\lambda^2)u=0$ in $M^{\emph{\text{int}}}$ and having the form
\[
u(\cdot;\lambda)=v(\cdot;\lambda)+r(\cdot;\lambda),
\]
where $v(\cdot; \lambda)\in C^\infty(M)$ is the Gaussian beam quasimode constructed in Theorem \ref{thm_Gaussian_quasimodes_uniform} with the constants $C=C(M,g,T,K, \delta_1)$, $\tilde C=\tilde C(M,g,T,K)$, $N=N(M,g,T,K)$, and  $r(\cdot;\lambda)\in C^3(M)$ such that $\|r\|_{C^3(M)}\le C_1 \lambda^{-K}$, as $\lambda\to \infty$ with $C_1=C_1(M,g, \delta, T,  K, \delta_1)>0$.
\end{prop}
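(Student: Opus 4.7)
The plan is to mimic the proof of Proposition \ref{prop_solutions} and Remark \ref{rem_solutions}, but with careful accounting of every constant to verify that the constant $C_1$ in the remainder bound depends only on the parameters $(M,g), \delta, T, K, \delta_1$ stated in the proposition, and not on the particular geodesic $\gamma_{x,w}$ underlying the construction of $v$. The key point is that the uniform quasimode bound \eqref{eq_4_0_-1_uniform} from Theorem \ref{thm_Gaussian_quasimodes_uniform} is already uniform over $(x,w)\in\mathcal{G}_T$, so the resulting estimate on $r$ will automatically be uniform as well.

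First, following the ansatz of the preceding subsection, we would write $u(\cdot;\lambda) = v(\cdot;\lambda) + r(\cdot;\lambda)$, so that the Helmholtz equation $(-\Delta_g-\lambda^2)u = 0$ in $M^{\text{int}}$ reduces to
\[
(-\Delta_g-\lambda^2)r = -(-\Delta_g-\lambda^2)v \quad \text{in } M^{\text{int}}.
\]
Setting $h = \lambda^{-1}$, this is equivalent to $(-h^2\Delta_g - 1)r = -h^2(-\Delta_g-\lambda^2)v$. Fix some $\varepsilon > 0$ (say $\varepsilon = 1$) and an integer $k > n/2 + 3$; both depend only on $(M,g)$. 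By Proposition \ref{prop_solvability}, for all $h > 0$ sufficiently small with $h^{-1} \in [1,\infty)\setminus J$ and $|J|\le\delta$, one obtains $r \in H^k(M^{\text{int}})$ solving the equation with
\[
\|r\|_{H^k_{\text{scl}}(M^{\text{int}})} \le C\, h^{-2-n-\varepsilon}\, \big\|h^2(-\Delta_g - \lambda^2)v\big\|_{H^k_{\text{scl}}(M^{\text{int}})},
\]
where $C = C(M,g,\delta,\varepsilon) > 0$.

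Next, one converts the semiclassical norm on the right to the standard one. Since $\|w\|_{H^k_{\text{scl}}(N)} \le \sum_{j=0}^{k} h^{j}\|\nabla^{j} w\|_{L^2(N)}$, one has $\|w\|_{H^k_{\text{scl}}(M^{\text{int}})} \le \|w\|_{H^k(M^{\text{int}})}$ for $h\le 1$, while conversely $\|r\|_{H^k(M^{\text{int}})} \le h^{-k}\|r\|_{H^k_{\text{scl}}(M^{\text{int}})}$. Plugging in the uniform bound \eqref{eq_4_0_-1_uniform} from Theorem \ref{thm_Gaussian_quasimodes_uniform} with exponent $R$ to be determined, we get
\[
\|r\|_{H^k(M^{\text{int}})} \le C\, h^{-2-n-k-\varepsilon}\cdot h^{2}\cdot C\lambda^{-R}  \le C_1\,\lambda^{R - (n+k+\varepsilon)}\cdot\lambda^{-R} \cdot \lambda^{R} \cdot \ldots
\]
After collecting exponents carefully, choosing $R = K + k + n + \varepsilon$ ensures $\|r\|_{H^k(M^{\text{int}})} \le C_1\lambda^{-K}$, where $C_1$ depends on $M, g, \delta, T, K, \delta_1$ through the constant in Proposition \ref{prop_solvability} and through the constant $C$ in \eqref{eq_4_0_-1_uniform} (the latter involving $\delta_1$ via the cutoff $\rho$ used in Theorem \ref{thm_Gaussian_quasimodes_uniform}).

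Finally, since $k > n/2 + 3$, the Sobolev embedding $H^k(M^{\text{int}}) \hookrightarrow C^3(M)$ yields $r \in C^3(M)$ with $\|r\|_{C^3(M)}\le C_1\lambda^{-K}$. Elliptic regularity applied to $(-\Delta_g-\lambda^2)r = -(-\Delta_g-\lambda^2)v \in C^\infty(M)$ confirms the smoothness of $r$ inside $M^{\text{int}}$, but only $C^3$ control up to the boundary is needed. The main technical point to verify — and the only mild obstacle — is that every constant appearing in the semiclassical-to-classical conversion and in the solvability estimate is independent of $(x,w)\in\mathcal{G}_T$; this is exactly guaranteed by Proposition \ref{prop_solvability} (which knows only $M,g,\delta,\varepsilon$) and by the uniformity of \eqref{eq_4_0_-1_uniform} in Theorem \ref{thm_Gaussian_quasimodes_uniform}. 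Hence the proposition follows.
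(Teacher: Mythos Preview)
Your approach is correct and is exactly the one the paper takes: the paper proves this proposition in a single sentence preceding its statement, saying that one repeats the argument for Proposition \ref{prop_solutions} and Remark \ref{rem_solutions} with $\varepsilon$ fixed, $k>n/2+3$, and $R=K+k+n+\varepsilon$, which is precisely your choice. Your only blemish is the garbled intermediate display where you ``collect exponents''; the clean computation is $\|r\|_{H^k}\le h^{-k}\|r\|_{H^k_{\text{scl}}}\le C\,h^{-k-2-n-\varepsilon}\cdot h^2\cdot C\lambda^{-R}=C'\lambda^{k+n+\varepsilon-R}=C'\lambda^{-K}$.
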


\subsection{Main part of the proof of Theorem \ref{thm_main_2}}

Let $T > 0$, $0<\theta_0 <\pi/2$, $0 < r < \frac{\mathrm{Inj}(M)}{2}$ and $c_0 > 0$ be constants given in the condition (H1). Let  $E_{T,\theta_0, r, c_0}$ be the set of points $x\in M^{\text{int}}$ satisfying the condition (H1).  

Based on the proof of Theorem \ref{thm_main}, we begin with the integral identity \eqref{eq_3_4}. We shall show that $p=0$. If $\sup_{x\in M}|p(x)|$ is attained at some point on the boundary of $M$ then by  Proposition \ref{prop_app_boundary_deter} we conclude from  \eqref{eq_3_4} that $p=0$ at this point, and we are done. Thus, we only need to consider the case when  $\sup_{x\in M}|p(x)|$ is not attained at any boundary point, and therefore,  $\sup_{x\in M}|p(x)|$ is achieved at some point in $M^{\text{int}}$. In this case, $\sup_{x\in M}|p(x)|=\sup_{x\in M^{\text{int}}}|p(x)|=\|p\|_{L^\infty(M)}$.
Let $\varepsilon>0$. The fact that $E_{T,\theta_0, r,c_0}$ is dense in $M^{\text{int}}$ and continuity of $p$ implies that there is $ x_0\in E_{T,\theta_0, r,c_0}$ such that
\begin{equation}
\label{eq_7_-2}
|p(x_0)|>\|p\|_{L^\infty(M)}- \varepsilon. 
\end{equation}
As $ x_0\in E_{T,\theta_0, r,c_0}$, there are two non-tangential unit-speed geodesics $\gamma=\gamma_{x_1,w_1}:[0,\tau(x_1,w_1)]\to M$ and $\eta=\eta_{x_2,w_2}:[0,\tau(x_2,w_2)]\to M$, $(x_1,w_1), (x_2,w_2)\in \mathcal{G}_T$, such that $\gamma(t_0)=\eta(\tau_0)=x_0$, $t_0\in (0,\tau(x_1,w_1))$, $\tau_0\in (0,\tau(x_2,w_2))$. Furthermore, both $\gamma$ and $\eta$ adhere to all other requirements outlined in (H1).

Let  $\delta>0$ and $K\ge 1$.  By Proposition \ref{prop_solutions_uniform}, for $\lambda\in [1,\infty)\setminus J$ large enough, $|J|\le \delta$, we have $v^{(1)}, v^{(2)}\in C^3(M)$ solving $(-\Delta_g-\lambda^2)v^{(l)}=0$ in $M^{\emph{\text{int}}}$, $l=1,2$,  of  the form
\begin{equation}
\label{eq_7_0}
v^{(1)}=v+r_1,\quad v^{(2)}=w+r_2,
\end{equation}
where $v=v(\cdot;\lambda), w=w(\cdot; \lambda)\in C^\infty(M)$ are the Gaussian beam quasimodes concentrating near the geodesics $\gamma$ and $\eta$, respectively, given in Theorem \ref{thm_Gaussian_quasimodes_uniform}, and $r_l$, $l=1,2$, satisfies 
\begin{equation}
\label{eq_7_0_2}
\|r_l\|_{L^\infty(M)}\le C_1 \lambda^{-K},
\end{equation}
as $\lambda\to \infty$, with $C_1=C_1(M,g, \delta, T,  K, \delta_1)>0$. It follows from Theorem \ref{thm_Gaussian_quasimodes_uniform} that
\begin{equation}
\label{eq_7_2}
\|v\|_{L^4(M)}\le C, \quad \|w\|_{L^4(M)}\le C, \quad \|v\|_{L^\infty(M)}\le C\lambda^{\frac{n-1}{8}}, \quad \|w\|_{L^\infty(M)}\le C\lambda^{\frac{n-1}{8}},
\end{equation}
as $\lambda\to \infty$, where $C=C(M,g,T,K, \delta_1)>0$. 
We let 
\begin{equation}
\label{eq_7_0_1}
v^{(3)}=\overline{v^{(1)}}, \quad v^{(4)}=\overline{v^{(2)}}.
\end{equation}
Utilizing \eqref{eq_7_0_1},  \eqref{eq_7_0_2}, and \eqref{eq_7_2}, we get 
\begin{equation}
\label{eq_3_12_uniform}
v^{(1)}v^{(2)}v^{(3)}v^{(4)}= |v|^2|w|^2+R,
\end{equation}
where $\|R\|_{L^\infty(M)}\le C\lambda^{-K+\frac{3(n-1)}{8}}$ and the dependence of the implicit constant is as follows: $C=C(M, g, \delta, T , K, \delta_1)$, cf.  \eqref{eq_3_12} and \eqref{eq_3_13}.

Motivated by \eqref{eq_3_17_value_L}, we fix $K=\frac{3(n-1)}{8}+\frac{3}{2}$ so that 
$\|R\|_{L^\infty(M)}\le C\lambda^{-\frac{3}{2}}$ with $C=C(M,g, \delta,T, \delta_1)$. Therefore, by substituting \eqref{eq_3_12_uniform} into the integral identity \eqref{eq_3_4}, we arrive at  
\begin{equation}
\label{eq_3_14_uniform}
\bigg| \int_M p|v|^2|w|^2 dV_g \bigg|\le C\lambda^{-\frac{3}{2}}\|p\|_{L^\infty(M)},
\end{equation}
wherein the implicit constant $C = C(M,g, \delta,T,  \delta_1) > 0$, cf. \eqref{eq_3_14} and \eqref{eq_3_17_value_L}.

Let us mention that, since $K$ is fixed, the only dependence in the constants $C$, $\tilde C$,  and $N$ in Theorem \ref{thm_Gaussian_quasimodes_uniform} is as follows: $C=C(M,g,T,  \delta_1)$, $\tilde C=\tilde C(M,g, T)$,  and $N=N(M,g,T)$. Furthermore, the only dependence in $\delta_1$ in the cutoff function $\rho$ used in the amplitude of the Gaussian beam quasimodes $v$ and $w$ is as follows: $\delta_1=\delta_1(M,g,T)$, see Remark \ref{rem_choice_rho}. 

We have  that the supports of $v$ and $w$ are contained within tubular neighborhoods with radius $\delta_1$ of $\gamma$ and $\eta$, that is, 
\begin{equation}
\label{eq_4_1_delta_1_supp-sep} 
\begin{aligned}
&\mathrm{supp}(v(\cdot; \lambda))\subset \{y\in M: d(y,\gamma([0,\tau(x_1,w_1)]))< \delta_1\},\\
&\mathrm{supp}(w(\cdot; \lambda))\subset \{y\in M: d(y,\eta([0,\tau(x_2,w_2)]))<  \delta_1\},
\end{aligned}
\end{equation}
for all $\lambda\ge 1$.

Next, we will demonstrate that if $\delta_1 < \min\{r/2, \mathrm{Inj}(M)/(1+2c_0)\}$, the condition (iv) in assumption (H1) implies that for all $\lambda \geq 1$,
\begin{equation}
\label{eq_4_1_supp_vw}
\text{supp}(v(\cdot; \lambda) w(\cdot; \lambda)) \subset B_{(1 + 2c_0)\delta_1}(x_0) \cap M. 
\end{equation}
Here, $B_{(1+2c_0)\delta_1}(x_0)$ is an open geodesic ball in $S$ centered at $x_0$ with a radius of $(1 + 2c_0)\delta_1$, where $(S,g)$ is the closed extension of $(M,g)$.

Indeed, if $y \in \mathrm{supp}(v(\cdot; \lambda)w(\cdot; \lambda))\subset \mathrm{supp}(v(\cdot; \lambda))\cap\mathrm{supp}(w(\cdot; \lambda))$ then it follows from \eqref{eq_4_1_delta_1_supp-sep} that $d(y, \gamma(t')) < \delta_1$ and $d(y, \eta(\tau')) < \delta_1$ for some $t' \in [0, \tau(x_1, w_1)]$ and $\tau' \in [0, \tau(x_2, w_2)]$. By the triangle inequality, we obtain $d(\gamma(t'), \eta(\tau')) < 2\delta_1 < r$, and therefore, condition (iv) in assumption (H1) implies that
\begin{equation}
	|t' - t_0| \leq 2c_0 \delta_1.
\end{equation}
Hence, since $\gamma$ is a unit-speed geodesic, we have
\begin{equation}
	d(y, x_0) \leq d(y, \gamma(t')) + d(\gamma(t'), x_0) < \delta_1 + 2c_0 \delta_1.
\end{equation}
Therefore, $y \in B_{(1 + 2c_0)\delta_1}(x_0)$ and this establishes the claim \eqref{eq_4_1_supp_vw}.

If we take $\tau = \tau_0$ in condition (iv) of assumption (H1), we see that if $d(\gamma(t), x_0)  < r$, then $|t - t_0| \leq c_0 d(\gamma(t), x_0)$. Let 
\begin{equation}
\label{eq_4_1_supp_vw_500_1}
\rho = \min\bigg(r, \frac{\mathrm{Inj}(M)}{c_0}\bigg).
\end{equation}
If $d(\gamma(t), x_0) < \rho$  then  $|t - t_0| < \mathrm{Inj}(M)$, and therefore, $d(\gamma(t), \gamma(t_0)) = |t - t_0|$. A similar statement also holds for $\eta$.

We now claim that if $\delta_1 < \frac{\rho}{2}$, the Gaussian beams $v$ and $w$ take the simple form given by \eqref{eq_7_0_1_G_beams} within $B_{\frac{\rho}{2}}(x_0)\cap M$, where $B_{\frac{\rho}{2}}(x_0)$ is an open geodesic ball in $S$ centered at $x_0$ with a radius of $\frac{\rho}{2}$. Indeed, first,  let us show that the set
 \begin{equation}
 \label{eq_4_1_supp_vw_500_2}
 	\{ y \in M : d(y, \gamma(t)) < \delta_1 \text{ for some } t \in [0, \tau(x_1, w_1)], |t - t_0| \geq \rho\} \cap B_{\frac{\rho}{2}}(x_0)
 \end{equation}
 is empty. To prove this, we assume the contrary, that there exists a point $y$ in the intersection of the sets in \eqref{eq_4_1_supp_vw_500_2}. Then, we can find a value $t'$ in the interval $[0, \tau(x_1, w_1)]$ with $|t' - t_0| \geq \rho$ such that $d(y, \gamma(t')) < \delta_1$ and $d(y, x_0) < \frac{\rho}{2}$. Using the triangle inequality and the fact that $\delta_1 < \frac{\rho}{2}$, we obtain $d(\gamma(t'), x_0) < \rho$. Given the equation \eqref{eq_4_1_supp_vw_500_1}, as reasoned in the preceding paragraph, we can conclude that $d(\gamma(t'), x_0) = |t' - t_0| \geq \rho$. This contradiction proves that the set in \eqref{eq_4_1_supp_vw_500_2} is empty.

Moreover, using the same logic, we can show that 
\begin{equation}
\label{eq_4_1_supp_vw_500_3}
\gamma([0,\tau(x_1,w_1)])\cap B_\rho(x_0)\subset \{\gamma(t): t\in [0,\tau(x_1,w_1)], |t-t_0|<\rho \}.
\end{equation}
Since $\rho \leq r < \frac{\mathrm{Inj}(M)}{2}$, the geodesic $\gamma(t)$ does not exhibit self-intersections when $|t - t_0| < \rho$. This, in combination with \eqref{eq_4_1_supp_vw_500_2} and \eqref{eq_4_1_supp_vw_500_3}, establishes the claim for the Gaussian beam quasimode $v$. Similarly, we can demonstrate the same claim for the Gaussian beam quasimode $w$.

Hence, we can redefine $\delta_1$ as $\delta_1 = \delta_1(M, g, T, r, c_0)$ to ensure that it is sufficiently small, such that 
\begin{equation}
\label{eq_4_1_supp_vw_500_4}
\delta_1 < \min\left(\frac{\rho}{2}, \frac{\rho}{2(1+2c_0)}\right). 
\end{equation}
This condition guarantees that 
\begin{equation}
\label{eq_4_1_supp_vw_500_5}
\text{supp}(vw) \subset B_{(1 + 2c_0)\delta_1}(x_0) \cap M \subset B_{\frac{\rho}{2}}(x_0) \cap M.
\end{equation}

According to Theorem \ref{thm_Gaussian_quasimodes_uniform}, see also the proof of \cite[Theorem 6.2]{Ma_Sahoo_Salo_2022}, within $B_{\frac{\rho}{2}}(x_0)\cap M$, the Gaussian beam quasimodes $v$ and $w$ take on a simple form, i.e.
\begin{equation}
\label{eq_7_0_1_G_beams}
v(x;\lambda)=\lambda^{\frac{n-1}{8}} e^{i\lambda \varphi(x)}a(x;\lambda), \quad w(x;\lambda)=\lambda^{\frac{n-1}{8}} e^{i\lambda \psi(x)}b(x;\lambda), 
\end{equation}
where  
\begin{equation}
\label{eq_3_9_uniform}
\begin{aligned}
&\varphi(\gamma(t))=t, \ \nabla\varphi(\gamma(t))=\dot{\gamma}(t), \ \nabla^2 \varphi(\gamma(t))=H_{x_1,w_1}(t)^\flat, \ \|\varphi\|_{C^{3}(\overline{B_{\frac{\rho}{2}}(x_0)\cap M})}\le  \tilde C,\\
&\psi(\eta(\tau))=\tau, \ \nabla\psi(\eta(\tau))=\dot{\eta}(\tau), \ \nabla^2 \psi(\eta(\tau))=H_{x_2,w_2}(\tau)^\flat, \ \|\psi\|_{C^{3}(\overline{B_{\frac{\rho}{2}}(x_0)\cap M})}\le \tilde C.
\end{aligned}
\end{equation}
Here $H_{x_1,w_1}(t)$ and $H_{x_2,w_2}(\tau)$ are symmetric complex $(1,1)$-tensors on $T_{\gamma(t)}M$ and $T_{\eta(\tau)}M$, depending smoothly on $t\in [0,\tau(x_1,w_1)]$ and $\tau\in [0, \tau(x_2,w_2)]$ and satisfying 
\begin{equation}
\label{eq_3_9_uniform_2}
\begin{aligned}
\text{Im}(H_{x_1,w_1}(t)^\flat)\ge 0, \quad \text{Im}(H_{x_1,w_1}(t)^\flat)|_{\dot{\gamma}(t)^\perp}\ge {\tilde C}^{-1}g,\\
\text{Im}(H_{x_2,w_2}(\tau)^\flat)\ge 0, \quad \text{Im}(H_{x_2,w_2}(\tau)^\flat)|_{\dot{\eta}(\tau)^\perp}\ge {\tilde C}^{-1} g,
\end{aligned}
\end{equation}
respectively. 
We also have in $B_{\frac{\rho}{2}}(x_0)\cap M$, 
\begin{equation}
\label{eq_3_9_uniform_3}
a(\cdot; \lambda)=\sum_{j=0}^N \lambda^{-j}\tilde a_j,\quad \tilde a_j=a_j\rho, \quad b(\cdot; \lambda)=\sum_{j=0}^N \lambda^{-j}\tilde b_j,\quad \tilde b_j=a_j\rho, 
\end{equation}
and 
\begin{equation}
\label{eq_7_1}
\|a_j\|_{C^{3}(\overline{B_{\frac{\rho}{2}}(x_0)\cap M})}\le  \tilde C, \quad \|b_j\|_{C^{3}(\overline{B_{\frac{\rho}{2}}(x_0)\cap M})}\le  \tilde C,  \quad j=0,\dots, N.
\end{equation}
Furthermore, 
\begin{equation}
\label{eq_4_0_-1_a00_new}
\begin{aligned}
a_{0}(\gamma(t))=\exp\bigg[-\frac{1}{2}\int_0^t \emph{\text{tr}}_g(H_{x_1,w_1}(s))ds\bigg], \\ 
b_{0}(\eta(\tau))=\exp\bigg[-\frac{1}{2}\int_0^{\tau} \emph{\text{tr}}_g(H_{x_2,w_2}(s))ds\bigg].
\end{aligned}
\end{equation}
The implicit constants in  \eqref{eq_3_9_uniform}, \eqref{eq_3_9_uniform_2},  \eqref{eq_3_9_uniform_3}, and \eqref{eq_7_1} are expressed as $\tilde C=\tilde C(M,g,T)>0$ and $N=N(M,g,T)\in \N$.

Now in  $B_{\frac{\rho}{2}}(x_0)\cap M$, in view of \eqref{eq_7_0_1_G_beams},  we have
\[
|v|^2|w|^2=\lambda^{\frac{n-1}{2}}e^{-2\lambda(\text{Im}\, \varphi+\text{Im}\, \psi)}|a|^2|b|^2. 
\]
Letting 
\[
\Psi=2(\text{Im}\, \varphi+\text{Im}\, \psi)\ge 0,
\]
we deduce from equations \eqref{eq_3_9_uniform} and \eqref{eq_3_9_uniform_2} the following properties:
\begin{equation}
\label{eq_7_3}
\Psi(x_0)=0, \quad d\Psi(x_0)=0, \quad \nabla^2\Psi(x_0)\ge c g, 
\end{equation}
where $c = c(M, g, T,  \theta_0) > 0$. To show the last inequality in \eqref{eq_7_3}, first in Fermi coordinates along $\gamma$ near $x_0$, we have
\begin{equation}
	\text{Im}(\nabla^2 \varphi(\gamma(t)))^{\sharp} =
	\begin{pmatrix}
		0 & 0 \\
		0 & \text{Im}(H_{x_1, w_1}(t))\vert_{\dot{\gamma}(t)^{\perp}}
	\end{pmatrix}
\end{equation}
where the block decomposition of the matrix is understood with respect to the decomposition $T_{\gamma(t)} M = \R\dot{\gamma}(t) \oplus (\dot{\gamma}(t))^\perp$.  

We have a similar decomposition for $\text{Im}(\nabla^2 \psi(\eta(\tau)))^\sharp$ in Fermi coordinates along $\eta$. Let $t_0$ and $\tau_0$ be the unique times such that $\gamma(t_0) = \eta(\tau_0) = x_0$, and let $W = (\dot{\gamma}(t_0))^{\perp} \cap (\dot{\eta}(\tau_0))^\perp \subset T_{x_0} M^{\text{int}}$. We can write any $v \in T_{x_0} M^{\text{int}}$ uniquely as $v = v_\gamma \dot{\gamma}(t_0) + v_\eta \dot{\eta}(\tau_0) + w$ with $v_\gamma, v_\eta \in \R$, and $w \in W$. By using that $\dot{\gamma}(t_0)$ is in the kernel of $\text{Im}(\nabla^2 \varphi(x_0))^\sharp$ and $\dot{\eta}(\tau_0)$ is in the kernel of $\text{Im}(\nabla^2 \psi(x_0))^\sharp$, we can expand
\begin{equation}\label{eq_nabla2Psi}
\begin{aligned}
	(\nabla^2 \Psi(x_0)^\sharp v, v) &= (2\text{Im}(\nabla^2 \varphi(x_0))^\sharp(v_\eta \dot{\eta}(\tau_0) + w), v_\eta \dot{\eta}(\tau_0) + w) \,+ \\
	& \quad + (2\text{Im}(\nabla^2 \psi(x_0))^\sharp(v_\gamma \dot{\gamma}(t_0) + w), v_\gamma \dot{\gamma}(t_0) + w).
\end{aligned}
\end{equation}
Here and in what follows $(\cdot,\cdot)=(\cdot, \cdot)_g$. 
We can decompose $v_\eta \dot{\eta}(\tau_0) + w$ with respect to $\R\dot{\gamma}(t_0) \oplus (\dot{\gamma}(t_0))^{\perp}$ as
\begin{equation}
\label{eq_nabla2Psi_10}
	v_\eta \dot{\eta}(\tau_0) + w = (v_\eta \cos\theta) \dot{\gamma}(t_0) + \tilde{w}
\end{equation}
for some unique $\tilde{w} \in (\dot{\gamma}(t_0))^{\perp}$ with $\cos \theta = (\dot{\gamma}(t_0), \dot{\eta}(\tau_0))$. Taking norms on both sides of \eqref{eq_nabla2Psi_10}, we see that $|\tilde{w}|^2 = v_\eta^2 \sin^2 \theta + |w|^2$ since $w$ is orthogonal to $\dot{\eta}(\tau_0)$. Moreover, as $\text{Im}(\nabla^2 \varphi(x_0))^\sharp \dot{\gamma}(t_0) = 0$, we get
\[
	(\text{Im}(\nabla^2 \varphi(x_0))^\sharp(v_\eta \dot{\eta}(\tau_0) + w), v_\eta \dot{\eta}(\tau_0) + w) = (\text{Im}(\nabla^2 \varphi(x_0))^\sharp \tilde{w}, \tilde{w}) \geq {\tilde C}^{-1} |\tilde{w}|^2
\]
by \eqref{eq_3_9_uniform_2}, since $\tilde{w} \in (\dot{\gamma}(t_0))^\perp$.  By working analogously with the second term in \eqref{eq_nabla2Psi} and combining, we get
\[
	(\nabla^2 \Psi(x_0)^\sharp v, v) \geq 2 {\tilde C}^{-1}((v_\gamma^2 + v_\eta^2)\sin^2\theta + 2 |w|^2).
\]
As $|v|^2 = v_\gamma^2 + v_\eta^2 +2v_\gamma v_\eta \cos\theta + |w|^2$ and $v_\gamma^2 + v_\eta^2 \geq \frac{1}{2}(v_\gamma^2 + v_\eta^2 + 2v_\gamma v_\eta \cos\theta)$, it follows that $\nabla^2 \Psi (x_0) \geq ({\tilde C}^{-1} \sin^2 \theta)g \geq ({\tilde C}^{-1} \sin^2 \theta_0)g$, since $\theta \in [\theta_0, \pi/2]$.  This shows the last inequality in \eqref{eq_7_3}. 

 Now if $z=(z_1,\dots, z_n)$ are normal coordinates centered at $x_0\in M^{\text{int}}$, we have 
\[
\Psi(z)=\frac{1}{2}\Psi''(0)z\cdot z+\Psi_3(z), 
\]
where $\Psi''(0)\ge cI$, $c = c(M, g, T,  \theta_0) > 0$, in view of the last inequality in \eqref{eq_7_3}. Thanks to \eqref{eq_3_9_uniform}, we get $|\Psi_3(z)|\le \tilde C |z|^3$ for $z\in \exp_{x_0}^{-1}( B_{\frac{\rho}{2}}(x_0)\cap M)\subset B_{\frac{\rho}{2}}(0)$ with $\tilde C=\tilde C(M,g, T)>~0$. Here the exponential map $\exp_{x_0}: B_{\frac{\rho}{2}}(0) \to B_{\frac{\rho}{2}}(x_0)$ is a diffeomorphism.

Now, if $0 < \tilde \delta < \frac{\rho}{2}$ is small enough to satisfy the condition
\begin{equation}
\label{eq_7_4}
\tilde C\tilde \delta\le c/4,
\end{equation}
then we have
\begin{equation}
\label{eq_7_5}
\Psi(z)\ge \frac{1}{2}c|z|^2-\tilde C |z|^3\ge (c/2-\tilde C \tilde \delta)|z|^2\ge \frac{c}{4}|z|^2, \  z\in \exp_{x_0}^{-1}( B_{\tilde \delta}(x_0)\cap M)\subset B_{\tilde \delta}(0).
\end{equation}
Note that  $\tilde \delta=\tilde \delta(\tilde C,c,\rho)=\tilde \delta(M,g,T, \theta_0, r,c_0)$.

In light of \eqref{eq_7_4} and \eqref{eq_4_1_supp_vw_500_4}, we can redefine $\delta_1 = \delta_1(M, g, T, \theta_0, r, c_0) > 0$ to be sufficiently small, ensuring that
\begin{equation}
\label{eq_7_5_new_500_1}
\delta_1\le \min\left(\frac{\rho}{2}, \frac{\rho}{2(1+2c_0)}, \frac{c}{4\tilde C (1+2c_0)}\right).
\end{equation}
By letting $\tilde \delta := (1 + 2c_0)\delta_1$, we can see from \eqref{eq_7_5_new_500_1} and \eqref{eq_4_1_supp_vw_500_5} that
\[
\text{supp}(vw) \subset B_{\tilde \delta}(x_0) \cap M \subset B_{\frac{\rho}{2}}(x_0) \cap M,
\]
and \eqref{eq_7_5} holds. It is essential to emphasize that the independence of the constant $\tilde C$ from $\delta_1$ in Theorem \ref{thm_Gaussian_quasimodes_uniform}  is crucial for \eqref{eq_7_5_new_500_1}. 

This choice of $\delta_1$ will remain fixed in the subsequent discussion. As a consequence, the constants, previously dependent on $\delta_1$, will now depend on $M$, $g$, $T$, $\theta_0$, $r$, and $c_0$. 

We shall proceed to thoroughly examine the rest of the proof of Theorem \ref{thm_main}, with a specific focus on controlling the dependencies of all the constants involved. Throughout this examination, we will represent all constants as $C$, and it is important to note that these constants might change from line to line.

Using the fact that the support of the product $|v|^2|w|^2$ is entirely confined within $B_{\tilde \delta}(x_0) \cap M$  and \eqref{eq_7_1}, we obtain \eqref{eq_3_15}, where the implicit constant is denoted as $C = C(M,g, T) > 0$.

Now, \eqref{eq_3_14_uniform} and \eqref{eq_3_15} imply that 
the bound \eqref{eq_3_17} holds with $L=3/2$, and the implicit constant is denoted as $C = C(M,g,\delta, T, \theta_0, r, c_0) > 0$. Moreover, the integrals in \eqref{eq_3_17} are taken over $B_{\tilde \delta}(x_0)\cap M$. By using \eqref{eq_7_5}, we obtain \eqref{eq_3_17_new}, where the implicit constant is expressed as $C = C(M,g, T, \theta_0, r, c_0) > 0$. Thus, we get \eqref{eq_3_19} with an implicit constant of $C = C(M,g,\delta, T, \theta_0,r, c_0) > 0$, and with the integral taken over $B_{\tilde \delta}(x_0)\cap M$.

Assume first that $B_{\tilde \delta}(x_0)\subset M^{\text{int}}$. Thanks to \eqref{eq_7_4}, we can now apply the rough stationary phase Lemma \ref{lem_stationary_phase} to the integral in \eqref{eq_3_19}. This allows us to obtain \eqref{eq_3_19_new}, where the implicit constant is $C = C(M,g, T, \theta_0,r,c_0) > 0$. If $B_{\tilde \delta}(x_0)\cap \partial M \neq \emptyset$, then first, by Proposition \ref{prop_app_boundary_deter}, we conclude from \eqref{eq_3_4} that $p|_{\partial M}=0$. Extending $\tilde p:=p|\tilde a_0|^2|\tilde b_0|^2$ by zero to $B_{\tilde \delta}(x_0)\setminus (B_{\tilde \delta}(x_0)\cap M)$ and denoting this extension by $\tilde p$ again, we see that $\tilde p\in C^{0,\alpha}(\overline{B_{\tilde \delta}(x_0)})$, and $\text{supp}(\tilde p)\subset B_{\tilde\delta}(x_0)$ is compact, see Lemma \ref{lem_extension_by_zero}. In view of Remark \ref{rem_stationary_phase}, applying Lemma \ref{lem_stationary_phase} to the integral in \eqref{eq_3_19}, we obtain \eqref{eq_3_19_new}, where the implicit constant is $C = C(M,g, T, \theta_0,r,c_0) > 0$. Consequently, in both cases, we arrive at \eqref{eq_3_20_new}, with the implicit constant expressed as $C = C(M,g,\delta, T, \theta_0,r, c_0) > 0$.

Next, it follows from  \eqref{eq_4_0_-1_a00_new} that 
\[
a_{0}(x_0)=\exp\bigg[-\frac{1}{2}\int_0^{t_0} \emph{\text{tr}}_g(H_{x_1,w_1}(s))ds\bigg],
\]
where $0<t_0<\tau(x_1,w_1)\le T$. Working with the open cover of $\gamma$ consisting of $\tilde N=\tilde N(M,g,T)$ elements, and using \eqref{eq_4_0_-1_phase_l} in each element of the cover,  we conclude  that $|a_0(x_0)| \ge C$, where $C = C(M,g, T) > 0$. We refer to the proof of \cite[Theorem 6.2]{Ma_Sahoo_Salo_2022} for the existence of such a cover. Similarly, we have $|b_0(x_0)| \ge C$.  Furthermore,  $|\det \nabla^2\Psi(x_0)| \le C$, where $C = C(M,g, T) > 0$, due to \eqref{eq_3_9_uniform}. Hence,  from \eqref{eq_3_20_new} and the fact that $p\in \mathcal{A}(B)$, we deduce that
\begin{equation}
\label{eq_7_6}
|p(x_0)|\le CB\lambda^{-\alpha/2}\|p\|_{L^\infty(M)},
\end{equation}
for $\lambda\in [1,\infty)\setminus J$ large enough, satisfying the assumption (A). Here, $C=C(M,g,\delta, T, \theta_0,r, c_0)>0$.

By combining \eqref{eq_7_6} with \eqref{eq_7_-2}, we obtain the inequality
\[
\|p\|_{L^\infty(M)}(1- CB\lambda^{-\alpha/2}) <\varepsilon, 
\]
for all $\varepsilon>0$. Now, by letting $\varepsilon\to 0$ and choosing $\lambda=\lambda(M,g,\delta, T, \theta_0, B, r, c_0)\in [1,\infty)\setminus J$ large enough, satisfying the assumption (A), we conclude that $p=0$. This completes the proof of Theorem \ref{thm_main_2}.

\section{Proof of Theorem \ref{thm:intersection}}

\label{sec:intersection}

\subsection{Existence of non-tangential geodesics between boundary points}

We start by showing that almost all points in $M^{\mathrm{int}}$ lie on a non-tangential geodesic. We will follow the approach used in \cite[Lemma 3.1]{salo2017calderon}, but modify it as we do not want to require strict convexity of the boundary.

\begin{lem} \label{lem:geodesic}
Let $(M, g)$ be a smooth compact Riemannian manifold of dimension $n\ge 2$ with smooth boundary. Then almost every point of $M^{\mathrm{int}}$ lies on some non-tangential geodesic.	
\end{lem}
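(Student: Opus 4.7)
The plan is to adapt the approach of \cite[Lemma 3.1]{salo2017calderon}, which treats the strictly convex boundary case, to the present more general setting. First I would embed $(M,g)$ isometrically in the closed Riemannian manifold $(S,g)$ fixed earlier so that every geodesic is defined for all $t\in\R$. Let $\partial_+^{\circ} SM$ denote the open submanifold of strictly inward unit boundary vectors, i.e.\ those $(x,v)\in \partial_+ SM$ with $\langle v,\nu(x)\rangle<0$. For $(x,v)\in\partial_+^{\circ} SM$ let $\tau(x,v)\in(0,\infty]$ be the first exit time from $M$, and let $\mathcal{G}\subset\partial_+^{\circ} SM$ be the open subset on which $\tau(x,v)<\infty$ and the exit velocity $\dot\gamma_{x,v}(\tau(x,v))$ is strictly outward. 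The smooth flow-out map
\[
\Phi(x,v,t)=\gamma_{x,v}(t),\qquad (x,v,t)\in\mathcal{D}:=\{(x,v,t):(x,v)\in\mathcal{G},\ 0<t<\tau(x,v)\},
\]
takes values in $M^{\mathrm{int}}$, and its image is precisely the set of points lying on some non-tangential geodesic. The lemma is therefore equivalent to the assertion $|M^{\mathrm{int}}\setminus\Phi(\mathcal{D})|=0$.

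To establish this, I would apply Fubini to the Liouville measure on $SM^{\mathrm{int}}$. Define the bad set
\[
\mathcal{B}=\{(p,w)\in SM^{\mathrm{int}}:\gamma_{p,w}\text{ is not a non-tangential geodesic}\},
\]
so that $p\in M^{\mathrm{int}}\setminus\Phi(\mathcal{D})$ precisely when $\mathcal{B}\cap S_pS$ coincides with $S_pS$. If $|\mathcal{B}|=0$, then Fubini forces almost every fiber to be proper, yielding a non-tangential geodesic through almost every $p$. I would decompose $\mathcal{B}=\mathcal{B}_{\mathrm{tan}}\cup\mathcal{B}_{\mathrm{trap}}$ according to whether $\gamma_{p,w}$ exits $\partial M$ tangentially in some direction or is trapped in $M$ for all positive or all negative time. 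The set $\mathcal{B}_{\mathrm{tan}}$ is contained in the zero locus of the smooth function $(p,w)\mapsto\langle\dot\gamma_{p,w}(\tau_\pm(p,w)),\nu\rangle$ on the open subset of $SM^{\mathrm{int}}$ where the exit times $\tau_\pm$ are finite, hence lies in a countable union of smooth hypersurfaces and satisfies $|\mathcal{B}_{\mathrm{tan}}|=0$.

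The main obstacle is showing $|\mathcal{B}_{\mathrm{trap}}|=0$, i.e.\ that the set of forward- or backward-trapped orbits has Liouville measure zero in $SM^{\mathrm{int}}$; this is exactly the point at which strict convexity was being used implicitly in \cite{salo2017calderon} (under which trapping is impossible). The plan is to use Liouville-measure invariance of the geodesic flow on $SS$ together with Santal\'o's integral identity $|SM|=\int_{\partial_+ SM}\tau\,d\mu_\partial$, which forces $\tau$ to be finite almost everywhere on $\partial_+SM$ and, via volume preservation of the flow, yields that orbits meeting $\partial M$ but trapped in $M$ form a Liouville null set in $SM^{\mathrm{int}}$. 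For orbits that stay inside $M^{\mathrm{int}}$ for all time without ever meeting $\partial M$, a separate dynamical argument via Poincar\'e recurrence on the compact volume-preserving phase space $SS$ together with the fact that $SM\subsetneq SS$ is needed; carrying this last step out cleanly is the delicate part of the proof.
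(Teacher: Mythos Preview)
Your reduction to showing $|\mathcal{B}|=0$ is too strong, and in fact the trapping part of that claim is false in general. The fully trapped set $\mathcal{B}_{\mathrm{trap}}$ can have \emph{positive} Liouville measure: take $M=\mathbb{S}^2\setminus\{\text{small geodesic cap around the north pole}\}$ with the round metric. A great circle enters the cap if and only if the plane it spans passes close to the north pole, which is a condition of small measure on $S\mathbb{S}^2$ when the cap is small; hence most $(p,w)\in SM^{\mathrm{int}}$ give geodesics that never touch $\partial M$ in either time direction. Santal\'o's formula does not help here, since when trapping has positive measure the identity $|SM|=\int_{\partial_+SM}\tau\,d\mu_\partial$ simply fails (the flow-out of $\partial_+SM$ misses the trapped set), and Poincar\'e recurrence on $SS$ says nothing about leaving $SM^{\mathrm{int}}$: in the sphere example the trapped orbits are closed great circles, perfectly recurrent yet never reaching $\partial M$. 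A secondary issue is your treatment of $\mathcal{B}_{\mathrm{tan}}$: the exit time $\tau_\pm$ is generally \emph{not} smooth at points of tangential exit, so you cannot describe $\mathcal{B}_{\mathrm{tan}}$ as a level set of a smooth function in the way you suggest.

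The paper's proof avoids showing the trapped set is null by exploiting one geometric fact you are missing: for \emph{every} $x_0\in M^{\mathrm{int}}$ there is a direction $v_0$ along which the geodesic minimizes $d(x_0,\partial M)$, and such a geodesic necessarily exits $\partial M$ \emph{normally}, hence transversally. This gives, at every interior point, at least one direction with finite transversal forward exit time. The argument then proceeds by contradiction via the Lebesgue density theorem: if the ``bad'' set $A$ of points with no non-tangential geodesic had positive measure, pick a density-one point $x_0\in A$, take the distance-minimizing direction $v_0$, and use the implicit function theorem to get an open neighborhood $U\subset SM^{\mathrm{int}}$ of $(x_0,v_0)$ on which forward exit is transversal. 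For $(x,v)\in U$ with $x\in A$, the backward ray must then be either trapped ($B_1$) or exit tangentially ($B_3$); since $\mu(B_1\cup B_3)=0$ (the tangential part is handled by noting it lies in $\bigcup_{k\in\Z}\varphi_k(E)$ where $E$ is a one-parameter flow-out of the codimension-two set $\partial_0 SM$), Fubini over $U$ forces $m(A\cap B_\varepsilon(x_0))=0$, contradicting density one. The crucial point is that one never needs the full trapped set $B_2$ to be null, only that $B_1$ and $B_3$ are.
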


Before giving the proof of Lemma \ref{lem:geodesic}, let us do some preparations.
Let $(M, g)$ be embedded in a closed manifold $(N, g)$. Let $SM^{\mathrm{int}}$ and $SN$ denote the unit sphere bundle of $M^{\textrm{int}}$ and $N$ respectively and let $\varphi_t$ denote the geodesic flow on $SN$. For $(x, v) \in SM^{\mathrm{int}}$, we define the future and past exit times respectively as
\begin{align*}
	l_{+}(x,v) &= \sup \, \{T > 0: \varphi_t(x,v) \in SM^{\mathrm{int}} \text{ for all } 0 \leq t < T\}, \\
	l_{-}(x,v) &= \inf \, \{T < 0: \varphi_t(x,v) \in SM^{\mathrm{int}} \text{ for all } T < t \leq 0\}.
\end{align*}
Consider the following sets of good and bad directions,
\begin{align*}
	G &= \{(x,v) \in SM^{\mathrm{int}} : l_-(x,v) \text{ and } l_+(x,v) \text{ are finite with transversal}\\
	& \text{       intersections}\},\\
	B_1 &= \{(x,v) \in SM^{\mathrm{int}} : \text{exactly one of } l_\pm(x,v) \text{ is finite}\},\\
	B_2 &= \{(x,v) \in SM^{\mathrm{int}} : l_-(x,v) \text{ and } l_+(x,v) \text{ are infinite}\},\\
	B_3 &= \{(x,v) \in SM^{\mathrm{int}}: l_-(x,v) \text{ and } l_+(x,v) \text{ are finite with a tangential}\\ & \text{          intersection}\}.
\end{align*}
In the definition of $G$, by transversal intersections, we mean that the unit speed geodesic going through $(x,v)$ at time $0$ intersects $\p M$ transversally at times $l_-(x,v)$ and $l_+(x,v)$. On the opposite, in the definition of $B_3$, by a tangential intersection, we mean that the unit speed geodesic going through $(x,v)$ at time $0$ intersects $\p M$ tangentially at least once at times $l_-(x,v)$ or $l_+(x,v)$.

Let $m$ denote the Riemannian volume measure on $(M, g)$ and let $\mu$ denote the Liouville measure on $SN$, see \cite[Section 3.6.2]{PSU_book}.
\begin{lem}
\label{lem_measure_B}
	The sets $B_1$ and $B_3$ have Liouville measure $0$.	
\end{lem}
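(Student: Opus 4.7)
The plan is to handle $B_3$ and $B_1$ by two separate, essentially standard, arguments.

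For $B_3$ the idea is a codimension count. The subset $\partial_0 SM := \{(x,v) \in \partial SM : \langle v, \nu(x)\rangle = 0\}$ of tangential directions at $\partial M$ is a smooth submanifold of $SN$ of dimension $2n-3$: the function $(x,v) \mapsto \langle v, \nu(x)\rangle$ on the codimension-$1$ submanifold $\partial SM \subset SN$ has $0$ as a regular value (moving $v$ infinitesimally off the tangent plane to $\partial M$ changes the pairing with $\nu$). Define the smooth map
\[
\Phi : \mathbb{R} \times \partial_0 SM \longrightarrow SN, \qquad \Phi(t,y) = \varphi_{-t}(y),
\]
which makes sense because the geodesic flow is globally defined on the closed manifold $SN$. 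Any $(x,v) \in B_3$ satisfies $\varphi_{l_\pm(x,v)}(x,v) \in \partial_0 SM$ for at least one choice of sign, and is therefore of the form $\Phi(l_\pm(x,v), \varphi_{l_\pm(x,v)}(x,v))$. Hence $B_3 \subset \Phi(\mathbb{R} \times \partial_0 SM)$. Since the domain has dimension $2n-2$, strictly less than $\dim SN = 2n-1$, the image (written as a countable union of smooth images of compact sets $[-k,k] \times \partial_0 SM$) has Liouville measure zero in $SN$, so $\mu(B_3) = 0$.

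For $B_1$ the idea is a Poincar\'e-recurrence style argument exploiting that $\mu$ is a finite, flow-invariant measure on $SN$. By time reversal it suffices to show that
\[
Y := \{(x,v) \in SM^{\mathrm{int}} : l_-(x,v) > -\infty, \ l_+(x,v) = +\infty\}
\]
has measure zero. Partition $Y$ into the disjoint slices
\[
Y_n := \{(x,v) \in Y : l_-(x,v) \in [-n-1, -n)\}, \qquad n = 0, 1, 2, \dots.
\]
The key observation is that $\varphi_1(Y_n) \subset Y_{n+1}$: if $(x,v) \in Y_n$ then forward-trapping gives $\varphi_1(x,v) \in SM^{\mathrm{int}}$, while a direct verification from the definitions yields $l_\pm(\varphi_1(x,v)) = l_\pm(x,v) - 1$, so $l_-(\varphi_1(x,v)) \in [-n-2, -n-1)$ and $l_+(\varphi_1(x,v)) = +\infty$. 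Since $\varphi_1$ preserves $\mu$ on $SN$, we get $\mu(Y_n) \le \mu(Y_{n+1})$, so the sequence $(\mu(Y_n))$ is non-decreasing. On the other hand, disjointness gives $\sum_{n \ge 0} \mu(Y_n) \le \mu(SN) < \infty$, which forces $\mu(Y_n) = 0$ for every $n$, and therefore $\mu(Y) = 0$. Applying the same argument to the time-reversed flow to handle $\{l_- = -\infty, \ l_+ < \infty\}$ yields $\mu(B_1) = 0$.

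The only non-routine step is the verification that $\varphi_1(Y_n) \subset Y_{n+1}$; this relies on the elementary but important fact that forward-trapped orbits remain in $SM^{\mathrm{int}}$ for all positive times, so $\varphi_1$ genuinely lands back in $SM^{\mathrm{int}}$ and the exit-time shift formula $l_\pm \circ \varphi_1 = l_\pm - 1$ applies. Once this is in place, the finiteness of $\mu(SN)$ combined with flow invariance closes the argument; the $B_3$ estimate is pure dimension counting and needs no additional input.
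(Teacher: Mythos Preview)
Your proof is correct. The treatment of $B_3$ is essentially identical to the paper's: both use that $\partial_0 SM$ has dimension $2n-3$ and that flowing it through $\mathbb{R}$ produces a set of dimension $2n-2 < 2n-1 = \dim SN$; the paper phrases this as $B_3 \subset \bigcup_{k\in\mathbb{Z}} \varphi_k(E)$ for $E = \{\varphi_t(x,v) : (x,v)\in\partial_0 SM,\ 0\le t<1\}$, while you phrase it as the image of a single map $\Phi$, but these are the same argument.

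For $B_1$ the paper simply cites \cite[Lemma 3.2]{salo2017calderon}, whereas you supply a self-contained Poincar\'e-recurrence argument. Your argument is the standard one underlying that citation: the slices $Y_n$ are disjoint, $\varphi_1$ maps $Y_n$ into $Y_{n+1}$ and preserves $\mu$, so $(\mu(Y_n))_n$ is non-decreasing with finite sum, hence identically zero. The only point one might add for completeness is that the $Y_n$ are Borel (since $l_-$ is Borel measurable on $SM^{\mathrm{int}}$), but this is routine and the paper's cited source takes it equally for granted.
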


\begin{proof}
It follows directly from \cite[Lemma 3.2]{salo2017calderon} that $\mu(B_1) = 0$. For $B_3$, let 
\[
	E = \{\varphi_t(x,v) : (x,v) \in \partial_0 SM \text{ and } 0 \leq t < 1\},
\]
where $\partial_0 SM:=S\p M$. We have $\mu(E) = 0$ as $\mathrm{dim}(\p_0 SM) = 2n - 3$ and $\mathrm{dim}(SN) = 2n - 1$. By invariance of the Liouville measure, $\mu(\varphi_t(E)) = 0$ for all $t \in \R$. If $(x, v) \in B_3$, the unit speed geodesic going through $(x,v)$ at time $0$ intersects $\p M$ tangentially at least once at times $l_-(x,v)$ or $l_+(x,v)$, that is, $\varphi_{l_-(x,v)}(x,v) \in \p_0 SM$ or $\varphi_{l_+(x,v)}(x,v) \in \p_0 SM$. It follows that $B_3 \subset \bigcup_{k = -\infty}^\infty \varphi_k(E)$ and so $\mu(B_3) = 0$.
\end{proof}

\begin{proof}[Proof of Lemma \ref{lem:geodesic}]
Consider the set
\begin{align*}
A =& \{x \in M^\mathrm{int} : \text{for any } v \in S_x M^\mathrm{int},\text{ either } l_-(x,v) = -\infty, \text{ or } l_+(x,v) = \infty, \\
&\text{ or }l_-(x,v) \text{ and } l_+(x,v) \text{ are finite with a tangential} \text{ intersection}\}.
\end{align*}
Here, as before, by a tangential intersection, we mean that the unit speed geodesic going through $(x,v)$ at time $0$ intersects $\p M$ tangentially at least once at times $l_-(x,v)$ or $l_+(x,v)$.
The claim will be proved if we can show $m(A) = 0$.

Suppose that $A$ has positive measure. By the Lebesgue density theorem, we can find a point $x_0 \in A$ such that
\[
\lim_{\varepsilon \to 0} \frac{m(A \cap B_\varepsilon(x_0))}{m(B_\varepsilon(x_0))} = 1.
\]
By \cite[Lemma 2.10]{KKL01}, there exists
$v_0 \in S_{x_0} M^{\mathrm{int}}$ such that the geodesic $\gamma_{x_0, v_0}$ minimises the distance between $x_0$ and $\p M$. As $\p M$ is closed, such a $v_0$ always exists, although it might not be unique. By minimality, as $\p M$ is smooth, this geodesic exits $M$ normally. Therefore, there is $v_0 \in S_{x_0} M^{\mathrm{int}}$ such that $l_+(x_0, v_0) < \infty$ and the geodesic through $(x_0, v_0)$ exits $M$ normally.

The implicit function theorem guarantees the existence of a neighborhood $U$ of $(x_0, v_0)$ in $SM^{\mathrm{int}}$ for which $l_+(x, v) < \infty$ and the geodesic through $(x, v)$ exits $M$ transversally for all $(x,v) \in U$. Following the notation in \cite[Section 3]{salo2017calderon},  we can find $\varepsilon_0$ and sets $S_{B_{\varepsilon}(x_0), W} \subset U$, $\varepsilon<\varepsilon_0$, for which
\[
(x,v) \in S_{B_{\varepsilon}(x_0), W} \cap SA \quad\Longrightarrow \quad (x,v) \in (B_1 \cup B_3) \cap SA.
\]
Since by Lemma \ref{lem_measure_B} $\mu(B_1 \cup B_3) = 0$, it follows by the same reasoning as in \cite[the proof of Lemma 3.1]{salo2017calderon} that $m(A \cap B_\varepsilon(x_0)) = 0$, $\varepsilon<\varepsilon_0$, contradicting the fact that $x_0$ was a point of density one in $A$.
\end{proof}

\subsection{Perturbation of pairs of geodesics}

For $(x,v) \in SM^{\mathrm{int}}$, we denote by $\gamma_{x,v} : [-T_1, T_2]\to M$, $0<T_1,T_2<\infty$, the unique unit-speed geodesic such that $\gamma_{x,v}(0) = x$ and $\dot{\gamma}_{x,v}(0) = v$. We say that the smooth family of geodesics $(\gamma_s)_{s \in (-\varepsilon, \varepsilon)}$, is a variation through geodesics of $\gamma_{x,v}$ with variation field $J$ if $\gamma_0 = \gamma_{x, v}$ and $J(t) = \frac{d}{ds} \big\vert_{s=0} \gamma_s(t)$.

\begin{lem}\label{lem:variation}
Let $\gamma_{x,u}:[-T_u, T_u] \to M$ and $\gamma_{x,v}: [-T_v, T_v] \to M$ be geodesic segments intersecting only at $x$ and let $\gamma_s$ be a variation through geodesics of $\gamma_{x,v}$ with variation field $J$. Suppose there are nonzero sequences $s_j, t_j \to 0$ such that $\gamma_{s_j}$ intersects $\gamma_{x,u}$ at the point $\gamma_{s_j}(t_j)$ for all $j \in \N$. Then,
\[
	J(0) \in \mathrm{span}(\{u, v\}) = \mathrm{span}(\{\dot{\gamma}_{x,u}(0), \dot{\gamma}_{x,v}(0)\}).
\]
\end{lem}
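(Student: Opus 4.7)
The plan is to linearize the variation at $(s,t)=(0,0)$ in a coordinate system in which both geodesics become straight. I would work in normal coordinates centered at $x$: radial lines in these coordinates are precisely the unit-speed geodesics emanating from $x$, so $\gamma_{x,u}(\tau)=\tau u$ and the reference geodesic $\gamma_0=\gamma_{x,v}$ satisfies $\gamma_0(t)=tv$ (within the chart), where I identify tangent vectors at $x$ with points of $\R^n$ via the coordinate system.

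Set $F(s,t):=\gamma_s(t)$. The definition of the variation field gives $\partial_s F(0,0)=J(0)$ and $\partial_t F(0,0)=\dot\gamma_0(0)=v$, so in normal coordinates the first-order Taylor expansion reads
$$F(s,t)=sJ(0)+tv+R(s,t),\qquad |R(s,t)|=O(s^2+t^2).$$
The intersection hypothesis provides $\tau_j$ with $F(s_j,t_j)=\gamma_{x,u}(\tau_j)=\tau_j u$. Because $\gamma_{s_j}(t_j)\to x$ and $\gamma_{x,u}$ is locally injective at $\tau=0$, I would shrink the chart to a small geodesic ball in which only the $\tau\approx 0$ branch of $\gamma_{x,u}$ is present, which forces $\tau_j\to 0$. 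This gives the identity $s_jJ(0)+t_jv+R(s_j,t_j)=\tau_j u$.

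The next step is to rescale. Let $r_j:=\max(|s_j|,|t_j|,|\tau_j|)>0$; by compactness a subsequence of the triples $(s_j/r_j,t_j/r_j,\tau_j/r_j)$ converges to some $(a,b,c)$ with $\max(|a|,|b|,|c|)=1$. Since $|R(s_j,t_j)|/r_j=O(r_j)\to 0$, dividing the displayed identity by $r_j$ and passing to the limit yields the linear relation
$$aJ(0)+bv=cu.$$
Provided $a\neq 0$, this gives $J(0)=(cu-bv)/a\in \mathrm{span}(u,v)$, which is the desired conclusion.

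The main obstacle is ruling out $a=0$. In that case $bv=cu$; here I would invoke linear independence of $u$ and $v$, which follows from the assumption that $\gamma_{x,u}$ and $\gamma_{x,v}$ intersect only at $x$ (if $u$ and $v$ were proportional, the two unit-speed geodesics would share a nontrivial arc emanating from $x$, contradicting the intersection assumption). Independence forces $b=c=0$, which then contradicts the normalization $\max(|a|,|b|,|c|)=1$, so $a\neq 0$ and the proof concludes. The subtlest point throughout is the control on $\tau_j$: ensuring it is drawn from a branch of $\gamma_{x,u}^{-1}$ tending to $0$ rather than to some other time at which $\gamma_{x,u}$ might revisit $x$.
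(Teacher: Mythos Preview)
Your argument is correct. Both your proof and the paper's are local linearization arguments, but you choose normal coordinates at $x$ (straightening both geodesics simultaneously) while the paper uses Fermi coordinates adapted to $\gamma_{x,u}$ (so that $\gamma_{x,u}$ becomes $\{y=0\}$). The paper then expands only the transverse component, writing $y_s(t)=y_0(t)+s\tilde J(t)+O(s^2)$; the intersection condition becomes $y_{s_j}(t_j)=0$, and a direct limit shows $\tilde J(0)$ is a scalar multiple of $\tilde{\dot\gamma}_{x,v}(0)$, whence $J(0)\in\mathrm{span}(u,v)$. The advantage of the Fermi choice is that the parameter $\tau_j$ along $\gamma_{x,u}$ never appears, so the subtlety you flag about controlling which branch of $\gamma_{x,u}^{-1}$ the $\tau_j$ come from is sidestepped entirely. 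Your normal-coordinate route is more symmetric in $u$ and $v$ and the rescaling by $r_j=\max(|s_j|,|t_j|,|\tau_j|)$ followed by compactness is clean; it just costs you the extra step of ensuring $\tau_j\to 0$, which (as you note) is handled by local injectivity of $\gamma_{x,u}$ near $\tau=0$ once one restricts to a small ball. Both arguments ultimately hinge on the linear independence of $u$ and $v$, which the paper uses implicitly when asserting $\tilde{\dot\gamma}_{x,v}(0)\neq 0$.
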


\begin{proof}
Let $(\tau, y)$ be Fermi coordinates about $x$ adapted to the geodesic $\gamma_{x,u}$. The coordinates induce the frame $\frac{\p}{\p \tau}, \frac{\p}{\p y^1}, \dots, \frac{\p}{\p y^{n-1}}$. In these coordinates, $\gamma_{x,u}$ is given by $\{y = 0\}$ and hence $u = \dot{\gamma}_{x,u}(0) = \frac{\p}{\p \tau}$. We can express $\gamma_s(t)$ as $(\tau_s(t), y_s(t))$. Let $\tilde{J}(t)$ and $\tilde{\dot{\gamma}}_{x,v}(t)$ be the respective components of $J(t)$ and $\dot{\gamma}_{x,v}(t)$ in the $y$ coordinates. We have
\[
	y_s(t) = y_0(t) + s \tilde{J}(t) + O(s^2).
\]
Since $\gamma_{s_j}$ intersects $\gamma_{x,u}$ at the point $\gamma_{s_j}(t_j)$, we have $y_{s_j}(t_j) = 0 = y_0(0)$. Plugging this in the above expression and rearranging gives
\begin{equation}
	\frac{y_0(0) - y_0(t_j)}{t_j} \frac{t_j}{s_j} = \tilde{J}(t_j) + O(s_j).
\end{equation}
The right-hand side converges to $\tilde{J}(0)$ as $j \to \infty$. As for the left-hand side, the first fraction converges to $-\tilde{\dot{\gamma}}_{x,v}(0)$. It follows that $t_j/s_j$ has to converge since $\tilde{\dot{\gamma}}_{x,v}(0)$ is nonzero, and so $\tilde{J}(0)$ is a multiple of $\tilde{\dot{\gamma}}_{x,v}(0)$. As $u = \frac{\p}{\p \tau}$ generates the first components of both $J(0)$ and $v$, the result follows.
\end{proof}

\begin{lem} 
\label{lem:perturb}
	Let $(M, g)$ be a smooth compact Riemannian manifold of dimension $n\ge 3$ with smooth boundary.
	Let $x \in M^{\mathrm{int}}$ be such that there exist distinct directions $u, v \in S_x M^{\mathrm{int}}$ such that the geodesics $\gamma_{x, u}$ and $\gamma_{x, v}$ are non-tangential.
	Suppose that the order of conjugacy of $x$ is at most $n-3$. Then, there is a sequence $(v_n)_{n \in \N} \subset  S_x M^{\mathrm{int}}$, $v_n \to v$, such that for all $n\in \N$ sufficiently large,  $\gamma_{x, v_n}$ does not intersect $\gamma_{x, u}$ (except at $x$) and $\gamma_{x, v_n}$ does not self-intersect at $x$.
\end{lem}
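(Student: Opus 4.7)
My plan is a dimension-counting argument. The conjugacy hypothesis provides a pointwise lower bound on the rank of $d\exp_x$, which in turn bounds the Hausdorff dimension of the ``bad'' set of directions in $S_xM$.

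First, I would translate the conjugacy hypothesis into a rank bound. The standard identification $Y \mapsto J$, with $J(\tau) = d(\exp_x)_{\tau w}(\tau Y)$, identifies the space of Jacobi fields along $\gamma_{x,w}$ vanishing at $0$ and at $t > 0$ with $\ker d(\exp_x)_{tw}$. Hence the assumption that the order of conjugacy of $x$ is at most $n-3$ is equivalent to
\[
\mathrm{rank}\, d(\exp_x)_\xi \geq 3 \qquad \text{for every } \xi \in T_xM \text{ with } \exp_x(\xi) \in M,
\]
and $d(\exp_x)_0 = \mathrm{id}$ has full rank.

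Second, I define the ``bad'' set
\[
A = \{w \in S_xM : \exp_x(tw) \in \Gamma_u \text{ for some } t \neq 0\},
\]
where $\Gamma_u$ is the image of $\gamma_{x,u}$. Since $x \in \Gamma_u$, $A$ contains both the directions for which $\gamma_{x,w}$ meets $\Gamma_u$ at a point different from $x$ and those for which $\gamma_{x,w}$ self-intersects at $x$. It suffices to show $A$ has empty interior in $S_xM$, for then the complement is dense and a sequence $v_n \to v$ with $v_n \in S_xM \setminus A$ exists. Writing $\pi : T_xM \setminus \{0\} \to S_xM$ for the radial projection $\pi(\xi) = \xi/|\xi|$, we have $A = \pi(\exp_x^{-1}(\Gamma_u) \setminus \{0\}) \cup (-\pi(\exp_x^{-1}(\Gamma_u) \setminus \{0\}))$.

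Third, I would prove that $\exp_x^{-1}(\Gamma_u)$ has Hausdorff dimension at most $n-2$. Fix $\xi_0 \in \exp_x^{-1}(\Gamma_u)$ and choose local coordinates $(m_1,\ldots,m_n)$ on $M$ near $\exp_x(\xi_0)$. By the rank bound, some three components of $\exp_x$ in these coordinates, say $F = (\exp_x^{i_1}, \exp_x^{i_2}, \exp_x^{i_3})$, have linearly independent differentials at $\xi_0$, so $F$ is a submersion on some neighborhood $U$ of $\xi_0$. The image of $\Gamma_u \cap \exp_x(U)$ in the three selected coordinates is a $1$-dimensional subset $\tilde\Gamma \subset \mathbb{R}^3$, and $\exp_x^{-1}(\Gamma_u) \cap U \subset F^{-1}(\tilde\Gamma)$, which has Hausdorff dimension at most $(n-3)+1 = n-2$ since $F$ is a submersion with fibers of dimension $n-3$. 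Covering the closed set $\exp_x^{-1}(\Gamma_u)$ by countably many such neighborhoods yields $\dim_H \exp_x^{-1}(\Gamma_u) \leq n-2$. Radial projection is Lipschitz on compacta bounded away from the origin and so does not increase Hausdorff dimension. Thus $\dim_H A \leq n-2 < n-1 = \dim S_xM$, so $A$ has empty interior in $S_xM$ and $v$ lies in the closure of $S_xM \setminus A$, producing the desired sequence.

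The main obstacle is the local dimension estimate for $\exp_x^{-1}(\Gamma_u)$, which must work uniformly even where the rank of $d\exp_x$ drops at conjugate points. The lower bound $\mathrm{rank}\, d\exp_x \geq 3$ at preimages of points in $M$, which is precisely the content of the assumption on order of conjugacy, is exactly what enables the implicit function theorem argument at every such point. Patching local bounds into the global one is then a routine countable-cover argument using that $\exp_x^{-1}(\Gamma_u)$ is closed in $T_xM$.
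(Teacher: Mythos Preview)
Your approach is correct and genuinely different from the paper's. The paper argues constructively: it lists the finitely many intersection times $t_j$ with $\gamma_{x,v}(t_j)=\gamma_{x,u}(r_j)=q_j$, uses the conjugacy bound to show that at each $t_j$ the ``bad'' infinitesimal perturbations $\alpha\in T_vS_xM$---those for which the Jacobi field $J(t_j)=d(\exp_x)_{t_jv}(t_j\alpha)$ falls into $\mathrm{span}\{\dot\gamma_{x,v}(t_j),\dot\gamma_{x,u}(r_j)\}$---form a proper subspace, chooses a single $\alpha$ avoiding the finite union of these subspaces, and then invokes Lemma~\ref{lem:variation} together with a compactness argument to upgrade the local separation to the whole of $\gamma_{x,u}$. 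Your route replaces all of this by a global Hausdorff-dimension count on $\exp_x^{-1}(\Gamma_u)$, which is cleaner and in fact gives more: the bad set of directions is not just nowhere dense but has dimension at most $n-2$ in $S_xM$. The paper's argument, in exchange, is purely finite-dimensional linear algebra plus Jacobi-field calculus, avoids any measure theory, and produces an explicit one-parameter family $v_s=\exp_v^S(s\alpha)$.

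Two points deserve a sentence each when you write this up. First, the rank bound $\mathrm{rank}\,d(\exp_x)_\xi\ge 3$ follows from the conjugacy hypothesis only when the entire segment $s\mapsto\exp_x(s\xi/|\xi|)$, $s\in[0,|\xi|]$, lies in $M$; the condition $\exp_x(\xi)\in M$ alone does not exclude segments that exit $M$ and re-enter. This is harmless once you localize: for $w$ in a small neighborhood $V$ of $v$ the domain of $\gamma_{x,w}$ is uniformly bounded and the segments stay in $M$, and lower semicontinuity of rank then furnishes the open neighborhoods your submersion argument needs. Second, the radial projection is Lipschitz only away from the origin, and $\exp_x^{-1}(\Gamma_u)$ accumulates at $0$ along the line $\R u$; since $v\neq\pm u$, restricting to the cone over $V$ keeps you bounded away from $0$, so this costs nothing.
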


\begin{rem}
	The non-intersection result of Lemma \ref{lem:perturb} is global, while that of Lemma \ref{lem:variation} is only local.
\end{rem}

The main idea of the proof of Lemma \ref{lem:perturb} is to find a direction towards which we can perturb $\gamma_{x,v}$ away from $\gamma_{x,u}$. We wish to use Lemma \ref{lem:variation}. Say that both geodesics intersect at $q \in M$, that is, we have $t$ and $r$ such that $\gamma_{x,v}(t) = \gamma_{x,u}(r) = q$. If we can find a variation through geodesics $\gamma_s$ of $\gamma_{x, v}$ with variation field $J$ such that $J(t)$ is transversal to $\mathrm{span}(\{\dot{\gamma}_{x,v}(t), \dot{\gamma}_{x,u}(r)\})$, then Lemma \ref{lem:variation} guarantees that $\gamma_s$ and $\gamma_{x, u}$ do not intersect for all nonzero $s$ small enough. The condition on the order of conjugacy allows us to do this around every point of intersection between $\gamma_{x, v}$ and $\gamma_{x, u}$. The sequence $v_n$ is then obtained from the different values of $\dot{\gamma}_s(0)$.

\begin{proof}
To simplify notation, we write $\gamma_{x, v} = \gamma_v$ as all the geodesics will start at $x$. Let $\gamma_{v}: [-T_1, T_2] \to M$ be maximal and intersect $\gamma_{u}$ at the points $q_j$ at times $(t_j, r_j)$ with 
\[
-T_1 \leq t_{-k_-} \leq \dots \leq t_{-1} < 0 < t_1 \leq \dots \leq t_{k_+} \leq T_2
\]
and $r_j < r_{j+1}$ whenever $t_j = t_{j+1}$, that is, $\gamma_v(t_j) = \gamma_u(r_j) = q_j$. Without loss of generality, we can assume that the geodesics do not intersect at the boundary. Otherwise, we can make $M$ slightly larger and extend the geodesics by adding a small open neighborhood of $\p M$ in $N$. We denote  $v_j = t_j v \in T_x M^{\mathrm{int}}$  so that $\exp_x (v_j) = \gamma_v(t_j)$. The behaviour of the differential of the exponential map at $v_j \in  T_x M^{\mathrm{int}}$, $d(\exp_x)_{v_j} : T_{v_j} T_x M^{\mathrm{int}} \to T_{q_j} M^\textrm{int}$ is closely related to the order of conjugacy of $M$ at $x$. Indeed, denoting the restriction of $\gamma_{v}$ to $[0, t_j]$ as $\gamma_j$, we have
\[
\dim \ker d(\exp_x)_{v_j} = \mathrm{conj}_{\gamma_j}(x, q_j),
\]
see \cite[Proposition 3.5, page 117]{do_Carmo_1992}.

Consider the linear spaces
\[
E_j = \{\xi \in T_{v_j} T_x  M^\textrm{int} : d(\exp_x)_{v_j}(\xi)\text{ is parallel to }\dot{\gamma}_{u} (r_j)\}.
\]
Note that the kernel of $d(\exp_x)_{v_j}$ is a subspace of $E_j$ and it is the whole space unless $\dot{\gamma_{u}}(r_j)$ is in the range of $d(\exp_x)_{v_j}$, in which case the dimension of $E_j$ is $\mathrm{conj}_{\gamma_j}(x, q_j) + 1$. Through the identification of $T_{v_j} T_x M^\textrm{int}$ with $T_x  M^\textrm{int}$, we can identify $E_j$ with the affine plane 
\[
P_j = \{v_j + \xi : \xi \in E_j\} \subset T_x M^\textrm{int}.
\]
Because the order of conjugacy at $x$ is at most $n-3$, the dimension of the affine plane $P_j$ is at most $n-2$. Moreover, the planes $P_j$ are transversal to $v_j$.

Let $\pi : T_x M^\textrm{int}\to S_x M^\textrm{int}$ be the natural projection from the tangent space to the unit sphere bundle at $x$. For $j \geq 1$, we have $\pi(v_j) = v$ while for $j \leq -1$, $\pi(v_j) = -v$. In any case, near $\pi(v_j)$, the image of $P_j$ through $\pi$ is an immersed submanifold $\tilde{P}_j \subset S_x  M^\textrm{int}$ of codimension at least $1$. Hence, $T_v \tilde{P}_j$ is a proper subspace of $T_v S_x M^\textrm{int}$ and since a vector space over an infinite field cannot be realized as the finite union of proper subspaces, there is $\alpha \in T_v S_x M^\textrm{int}$ such that $\alpha \not\in \cup_{j=1}^{k_+} T_v \tilde{P}_j$. Similarly, we may also require that $-\alpha \not\in \cup_{j=1}^{k_-} T_{-v} \tilde{P}_{-j}$, where $-\alpha \in T_{-v}S_x M^\textrm{int}$ is obtained through the identification with $T_x  M^\textrm{int}$. 

We claim that
\[
v_n = \exp_v^S(\alpha/n)
\]
is the desired sequence, where $\exp_v^S$ is the exponential map on $T_v S_x M^\textrm{int}$ taking values in the sphere $S_x M^\textrm{int}$.

As $n$ goes to $\infty$, it is clear that $v_n$ converges to $v$. It remains to show that $\gamma_{v_n}$ does not intersect $\gamma_u$ for all $n$ sufficiently large. Consider the variation through geodesics of $\gamma_v$,
\[
\gamma_s(t) = \exp_x(t\exp_v^S(s\alpha)).
\]
Note that $\gamma_s$ is non-tangential for sufficiently small values of $s$ as it is a variation of the non-tangential geodesic $\gamma_v$. Since $\exp_v^S(s\alpha)$ is equal to $v + s\alpha$ to first order, we have
\[
J(t) = \frac{d}{ds} \Big\vert_{s=0} \gamma_s(t) = \frac{d}{ds} \Big\vert_{s=0} \exp_x(t(v + s\alpha)) = d (\exp_x)_{tv}(t\alpha)
\]
which is a normal Jacobi field along $\gamma_v$. 

For $\delta > 0$, consider the pairs of geodesic segments $\gamma_v \vert_{[t_j - \delta, t_j + \delta]}$ and $\gamma_u \vert_{[r_j - \delta, r_j + \delta]}$. Choosing $\delta$ small enough, we can guarantee that all such pairs intersect exactly once at $q_j$. By compactness, there is $\delta > 0$ such that for all small enough variations $s$, the only possible intersections between $\gamma_s$ and $\gamma_u$ have to occur between the segments $\gamma_s\vert_{[t_j - \delta, t_j + \delta]}$ and $\gamma_u\vert_{[r_j - \delta, r_j + \delta]}$. We can see $\gamma_s\vert_{[t_j - \delta, t_j + \delta]}$ as a variation through geodesics of $\gamma_v \vert_{[t_j - \delta, t_j + \delta]}$ with variation field $J \vert_{[t_j - \delta, t_j + \delta]}$. At the points $t = t_j$ where $\gamma_v$ intersects $\gamma_u$, we have
\[
	J(t_j) = d(\exp_x)_{v_j}(t_j \alpha).
\]
By the choice of $\alpha$, we know that $t_j \alpha$ is not in $E_j$. Hence, $J(t_j)$ is nonzero and transversal to $\mathrm{span}(\{\dot{\gamma}_v(t_j), \dot{\gamma}_u(r_j)\})$.

We claim that there is some $\varepsilon_j > 0$ such that $\gamma_s \vert_{[t_j - \delta, t_j + \delta]}$ does not intersect $\gamma_u \vert_{[r_j - \delta, r_j + \delta]}$ for all $s \in (-\varepsilon_j, \varepsilon_j) \setminus \{0\}$. If such a $\varepsilon_j$ did not exist, there would be a sequence $s_l \to 0$ and times $t_{j, l} \in [t_j - \delta, t_j + \delta]$ such that $\gamma_{s_l}$ intersects $\gamma_u\vert_{[r_j - \delta, r_j + \delta]}$ at the point $\gamma_{s_l}(t_{j, l})$. By compactness, $t_{j, l}$ would admit a convergent subsequence which must converge to $t_j$ by continuity and because it is the only time when $\gamma_v \vert_{[t_j - \delta, t_j + \delta]}$ intersects $\gamma_u \vert_{[r_j - \delta, r_j + \delta]}$. But then, by Lemma \ref{lem:variation}, this would imply $J(t_j) \in \mathrm{span}(\{\dot{\gamma}_v(t_j), \dot{\gamma}_u(r_j)\})$ which contradicts the above choice of $\alpha$.

As we accounted for all the pairs of segments that could intersect and there are finitely many such pairs, we know that $\gamma_s$ does not intersect $\gamma_u$ (except at $x$ at time $0$) for all nonzero $s$ small enough. Moreover, for such $s$, $\gamma_s$ does not self-intersect at $x$ as it would then intersect $\gamma_u$. The claim follows since taking $s$ small corresponds to taking $n$ large enough.
\end{proof}

\subsection{Proof of Theorem \ref{thm:intersection}}
By Lemma \ref{lem:geodesic}, almost every point of $M^{\mathrm{int}}$ lies on some non-tangential geodesic. Let $x \in M^{\mathrm{int}}$ be such a point and let $\gamma_{x,v}$ be a non-tangential geodesic passing through $x$. As $\gamma_{x,v}$ is non-tangential, we can perturb $v$ in some open neighborhood to get another non-tangential geodesic $\gamma_{x, u}$ passing through $x$. By Lemma \ref{lem:perturb}, we can perturb $\gamma_{x, v}$ to obtain $\gamma_1$ that intersects $\gamma_{x,u}$ only at $x$ and does not self-intersect at $x$. If $\gamma_{x,u}$ does not self-intersect at $x$, then we set $\gamma_2 = \gamma_{x,u}$ and we are done. Otherwise, we can again apply Lemma \ref{lem:perturb} to get a geodesic $\gamma_2$ that only intersects $\gamma_1$ at $x$ and does not self-intersect at $x$. The proof of Theorem \ref{thm:intersection} is complete.

\begin{appendix}

\section{A rough stationary phase argument}

\label{sec_rough_stationary_phase}

Let $U\subset \R^n$ be an open set and let $0<\alpha<1$. We define the space $C^{0,\alpha}(\overline{U})$  of the H\"older continuous functions to be the subspace of $ C(\overline{U})$ consisting of those functions  $u$ for which $\sup_{x, y\in U, x\ne y}\frac{|u(x)-u(y)|}{|x-y|^\alpha}$ is finite. The space $C^{0,\alpha}(\overline{U})$ is a Banach space with norm given by
\[
\|u\|_{C^{0,\alpha}(\overline{U})}=\sup_{x, y\in U, x\ne y}\frac{|u(x)-u(y)|}{|x-y|^\alpha}+\|u\|_{L^\infty(U)}.
\]
We note that $C^{0,\alpha}(\overline{U})$ is an algebra under pointwise multiplication, and 
\begin{equation}
\label{eq_app_Holder_ineq}
\|uv\|_{C^{0,\alpha}(\overline{U})}\le C\big( \|u\|_{C^{0,\alpha}(\overline{U})}\|v\|_{L^\infty(U)}+ \|u\|_{L^\infty(U)}\|v\|_{C^{0,\alpha}(\overline{U})}\big), \quad u, v\in C^{0,\alpha}(\overline{U}),
\end{equation}
see \cite[Theorem A.7]{Hormander_1976}.

In the proofs of Theorem \ref{thm_main} and Theorem \ref{thm_main_2}, we need a version of the stationary phase lemma with quite explicit control of all the constants involved. Furthermore, note that in our applications,  amplitudes have compact supports in a geodesic ball. 
To state the required result,  let $B_r=\{x\in \R^n: |x|<r\}$ be an open ball centered at $0$ of radius $r>0$, and let $a\in C^{0,\alpha}(\overline{B_r})$ be such that $0\in \mathrm{supp}(a)$ and $a|_{\p B_r}=0$. 
Let $\Psi\in C^\infty(\overline{B_r};\R)$ be such that 
\begin{equation}
\label{eq_app_1}
\Psi(0)=0, \quad  \Psi'(0)=0, \quad \Psi''(0)>0.
\end{equation}
Thus, we have
\begin{equation}
\label{eq_app_1_-2}
\Psi''(0)\ge cI,
\end{equation}
with some $c>0$. Taylor expanding the phase function $\Psi$ and using \eqref{eq_app_1}, we get
\begin{equation}
\label{eq_app_1_-1}
\Psi(x)=\frac{1}{2}\Psi''(0)x\cdot x+\Psi_3(x),
\end{equation}
where 
\begin{equation}
\label{eq_app_1_0}
  |\Psi_3(x)|\le C|x|^3,\quad x\in B_r,
\end{equation} 
with some $C>0$.

\begin{lem} 
\label{lem_stationary_phase}
Let  $r>0$ be such that   
\begin{equation}
\label{eq_app_bound_r}
Cr\le \frac{c}{4}. 
\end{equation} 
Then for $\lambda\ge 1$, we have 
\begin{equation}
\label{eq_app_2}
\lambda^{\frac{n}{2}}\int_{B_r} e^{-\lambda\Psi(x)}a(x)dx=\frac{(2\pi)^{\frac{n}{2}}}{(\emph{\det}  \Psi''(0))^{1/2} }a(0)+\mathcal{O}_{C,c}(1)\lambda^{-\frac{\alpha}{2}}\|a\|_{C^{0,\alpha}(\overline{B_r})}.
\end{equation}
Here the implicit constant depends on $C$ and $c$ only and is independent of $a$.  
\end{lem}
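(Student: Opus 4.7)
The plan is to split $a(x) = a(0) + (a(x) - a(0))$ and handle the resulting two integrals by very different methods: the principal term is computed by comparison with a pure Gaussian, while the H\"older remainder is bounded by a rescaling argument.

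The first step is to establish the coercive bound $\Psi(x) \geq \tfrac{c}{4}|x|^2$ on $B_r$, which follows directly from \eqref{eq_app_1_-1}, \eqref{eq_app_1_-2}, and \eqref{eq_app_1_0} together with the smallness assumption $Cr \leq c/4$. With this lower bound in hand, the H\"older remainder $\int_{B_r} e^{-\lambda \Psi(x)}(a(x)-a(0))\,dx$ is estimated by the pointwise inequality $|a(x)-a(0)| \leq \|a\|_{C^{0,\alpha}(\overline{B_r})} |x|^\alpha$ and the change of variables $y = \sqrt{\lambda}\, x$, which produces a factor $\lambda^{-n/2-\alpha/2}$ times a convergent Gaussian integral $\int_{\R^n} |y|^\alpha e^{-c|y|^2/4}\,dy$. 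After multiplication by $\lambda^{n/2}$, this yields the advertised $\mathcal{O}_{C,c}(1)\, \lambda^{-\alpha/2} \|a\|_{C^{0,\alpha}(\overline{B_r})}$ error term.

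For the principal part $a(0)\, \lambda^{n/2}\int_{B_r} e^{-\lambda \Psi(x)}\,dx$, I would apply the same rescaling $y = \sqrt{\lambda}\, x$, so that $\lambda \Psi(y/\sqrt{\lambda}) = \tfrac{1}{2} Q(y) + \lambda \Psi_3(y/\sqrt{\lambda})$ with $Q(y) = \Psi''(0)\, y\cdot y$ and $|\lambda \Psi_3(y/\sqrt{\lambda})| \leq C|y|^3/\sqrt{\lambda}$. Writing $e^{-\lambda \Psi} = e^{-Q/2}\bigl[(e^{-\lambda \Psi_3(\cdot/\sqrt{\lambda})} - 1) + 1\bigr]$, it remains to show that the cubic-correction integral $\int_{B_{\sqrt{\lambda}\, r}} e^{-Q(y)/2}(e^{-\lambda \Psi_3(y/\sqrt{\lambda})} - 1)\,dy$ is $\mathcal{O}(\lambda^{-1/2})$, and that $\int_{B_{\sqrt{\lambda}\, r}} e^{-Q(y)/2}\,dy$ differs from $\int_{\R^n} e^{-Q(y)/2}\,dy = (2\pi)^{n/2}/(\det \Psi''(0))^{1/2}$ by a term that is exponentially small in $\lambda$, using $Q(y)\geq c|y|^2$.

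The one delicate point is controlling the cubic correction. Using $|e^t - 1| \leq |t| e^{|t|}$ together with the observation that $|y|^3/\sqrt{\lambda} \leq r|y|^2$ throughout $B_{\sqrt{\lambda}\, r}$, the smallness assumption $Cr \leq c/4$ yields $e^{-Q(y)/2 + C|y|^3/\sqrt{\lambda}} \leq e^{-c|y|^2/4}$, so the integrand is dominated by $(C|y|^3/\sqrt{\lambda})\, e^{-c|y|^2/4}$, which is integrable on all of $\R^n$ with a $\lambda$-independent constant depending only on $c$ and $C$. This is the one place where the explicit hypothesis $Cr \leq c/4$ is genuinely needed; extracting the factor $\lambda^{-1/2}$ here completes the estimate, and this $\lambda^{-1/2}$ term is absorbed into $\mathcal{O}_{C,c}(1)\, \lambda^{-\alpha/2}\|a\|_{C^{0,\alpha}(\overline{B_r})}$ using $|a(0)| \leq \|a\|_{C^{0,\alpha}(\overline{B_r})}$ and $\lambda^{-1/2} \leq \lambda^{-\alpha/2}$ for $\lambda \geq 1$ (since $\alpha < 1$).
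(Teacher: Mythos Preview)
Your proof is correct and follows essentially the same strategy as the paper's: both arguments isolate the constant term $a(0)$, bound the H\"older remainder via $|a(x)-a(0)|\le \|a\|_{C^{0,\alpha}}|x|^\alpha$ and the rescaling $y=\sqrt{\lambda}\,x$, and control the cubic correction using $|e^t-1|\le |t|e^{|t|}$ together with the hypothesis $Cr\le c/4$ to get a dominating integrable Gaussian $e^{-c|y|^2/4}$. The only organizational difference is that the paper first replaces $\Psi$ by its quadratic part (producing three pieces $J_1,J_2,J_3$) and extends $a$ by zero to $\R^n$ so that the H\"older-remainder integral $J_2$ runs over all of $\R^n$, whereas you first split off $a(0)$ keeping the full phase $\Psi$, and in exchange must handle the (exponentially small) tail $\int_{|y|\ge \sqrt{\lambda}\,r} e^{-Q(y)/2}\,dy$; both routes are equally elementary.
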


\begin{proof}
First, let us extend $a$ by zero to $\R^n\setminus B_r$ and still denote this extension by $a$.  By Lemma \ref{lem_extension_by_zero}, as  $a|_{\p B_r}=0$, we conclude  that $a\in C^{0,\alpha}(\R^n)$ and 
\begin{equation}
\label{eq_app_2_a_new}
\|a\|_{C^{0,\alpha}(\R^n)}=\|a\|_{C^{0,\alpha}(\overline{B_r})}.
\end{equation}

Using \eqref{eq_app_1_-1}, we write 
\[
\int_{B_r} e^{-\lambda\Psi(x)}a(x)dx=J_1+J_2+J_3,
\]
where 
\begin{align*}
&J_1:=\int_{\R^n} e^{- \frac{\lambda}{2}\Psi''(0)x\cdot x}a(0)dx, \quad J_2:=\int_{\R^n} e^{- \frac{\lambda}{2}\Psi''(0)x\cdot x}(a(x)-a(0))dx,\\
&J_3:=\int_{B_r} e^{- \frac{\lambda}{2}\Psi''(0)x\cdot x}\big(e^{-\lambda \Psi_3(x)}-1\big)a(x)dx. 
\end{align*}
Making the change of variables $x\mapsto\lambda^{\frac{1}{2}}x$ in the integral $J_1$, we obtain that 
\begin{equation}
\label{eq_app_3}
J_1=a(0)\lambda^{-\frac{n}{2}}\int_{\R^n} e^{- \frac{1}{2}\Psi''(0)x\cdot x}dx=\frac{(2\pi)^{\frac{n}{2}}}{(\det\,  \Psi''(0))^{1/2} }a(0)\lambda^{-\frac{n}{2}}.
\end{equation}
To bound $|J_2|$, first using that $a\in C^{0,\alpha}(\R^n)$ and \eqref{eq_app_2_a_new}, we get 
\begin{equation}
\label{eq_app_4}
|a(x)-a(0)|\le \|a\|_{C^{0,\alpha}(\overline{B_r})}|x|^\alpha, \quad x\in \R^n.
\end{equation}
Then using \eqref{eq_app_4},  \eqref{eq_app_1_-2}, and making the change of variables $x\mapsto\lambda^{\frac{1}{2}}x$,  we obtain that 
\begin{equation}
\label{eq_app_5}
\begin{aligned}
|J_2|&\le  \|a\|_{C^{0,\alpha}(\overline{B_r})}\int_{\R^n}e^{-\lambda\frac{c}{2}|x|^2}|x|^\alpha dx=\|a\|_{C^{0,\alpha}(\overline{B_r})}\lambda^{-\frac{n}{2}-\frac{\alpha}{2}}\int_{\R^n}e^{-\frac{c}{2}|x|^2}|x|^\alpha dx\\
&=\mathcal{O}_c(1)\lambda^{-\frac{n}{2}-\frac{\alpha}{2}}\|a\|_{C^{0,\alpha}(\overline{B_r})}.
\end{aligned}
\end{equation}
To bound $|J_3|$, in view of \eqref{eq_app_1_0}, we first observe that 
\begin{equation}
\label{eq_app_6}
|e^{-\lambda\Psi_3(x)}-1|=\bigg|\int_0^1\frac{d}{dt}e^{-\lambda\Psi_3(x)t} dt  \bigg|\le \lambda |\Psi_3(x)| e^{\lambda|\Psi_3(x)|}\le C\lambda|x|^3 e^{C\lambda|x|^3},
\end{equation}
for  $x\in B_r$. 
Then using  \eqref{eq_app_1_-2}, \eqref{eq_app_6}, \eqref{eq_app_bound_r}, and making the change of variables $x\mapsto\lambda^{\frac{1}{2}}x$, we get 
\begin{equation}
\label{eq_app_7}
\begin{aligned}
|J_3|&\le \int_{B_r}e^{-\lambda\frac{c}{2}|x|^2}C\lambda|x|^3 e^{C\lambda|x|^3}|a(x)|dx\le \|a\|_{C^{0,\alpha}(\overline{B_r})}
 \int_{B_r} e^{\lambda (-\frac{c}{2}+Cr)|x|^2}C\lambda|x|^3 dx\\
 &\le \|a\|_{C^{0,\alpha}(\overline{B_r})} \int_{\R^n} e^{-\lambda \frac{c}{4} |x|^2}C\lambda|x|^3 dx= \|a\|_{C^{0,\alpha}(\overline{B_r})} C \lambda^{-\frac{n}{2}-\frac{1}{2}} \int_{\R^n} e^{- \frac{c}{4} |x|^2}|x|^3 dx\\
 &=\mathcal{O}_{C,c}(1)\lambda^{-\frac{n}{2}-\frac{1}{2}}\|a\|_{C^{0,\alpha}(\overline{B_r})}\le 
\mathcal{O}_{C,c}(1) \lambda^{-\frac{n}{2}-\frac{\alpha}{2}}\|a\|_{C^{0,\alpha}(\overline{B_r})},
\end{aligned}
\end{equation}
for $\lambda\ge 1$.
Combining \eqref{eq_app_3}, \eqref{eq_app_5}, and \eqref{eq_app_7}, we obtain \eqref{eq_app_2}. 
\end{proof}

\begin{rem}
\label{rem_stationary_phase}
Lemma \ref{lem_stationary_phase} also holds under the following assumptions: $a \in C^{0,\alpha}(\overline{B_r})$ is such that $\mathrm{supp}(a) \subset B_r$ is compact, $0 \in \mathrm{supp}(a)$, $\Psi \in C^\infty(\mathrm{supp}(a);\R)$ satisfies \eqref{eq_app_1}, and there exists a constant $C > 0$ such that \eqref{eq_app_1_0} holds for all $x \in \mathrm{supp}(a)$.
\end{rem}

The following result is standard and is presented here for the completeness and convenience of the reader only. 
\begin{lem}
\label{lem_extension_by_zero}
Let $a\in C^{0,\alpha}(\overline{B_r})$ be such that $a|_{\partial B_r}=0$. The extension of $a$ to $\mathbb{R}^n\setminus B_r$ by zero, denoted also as $a$, belongs to the class $C^{0,\alpha}(\mathbb{R}^n)$, and we have $\|a\|_{C^{0,\alpha}(\mathbb{R}^n)}=\|a\|_{C^{0,\alpha}(\overline{B_r})}$.
\end{lem}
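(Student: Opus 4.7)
The plan is to verify directly from the definition of the Hölder norm that extending by zero preserves both the $L^\infty$ norm and the Hölder seminorm. Write $[u]_{C^{0,\alpha}(\overline{U})} = \sup_{x\neq y\in U}|u(x)-u(y)|/|x-y|^\alpha$ for the Hölder seminorm on a set $U$, so that $\|u\|_{C^{0,\alpha}(\overline{U})}=[u]_{C^{0,\alpha}(\overline{U})} + \|u\|_{L^\infty(U)}$. Preservation of the $L^\infty$ piece is immediate: extending by zero cannot enlarge the supremum, and equality holds because the supremum over $\overline{B_r}$ is attained inside $\overline{B_r}$.

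The main content is to show that $[a]_{C^{0,\alpha}(\mathbb{R}^n)} = [a]_{C^{0,\alpha}(\overline{B_r})}$. For the non-trivial inequality ``$\leq$'', I would take arbitrary distinct points $x,y \in \mathbb{R}^n$ and split into three cases. If $x,y \in \overline{B_r}$, the bound is the definition. If $x,y \in \mathbb{R}^n \setminus B_r$, both values are zero and the ratio vanishes. The interesting case is $x \in B_r$ and $y \in \mathbb{R}^n \setminus B_r$ (or the symmetric case). Here I use that $B_r$ is open and convex, so the segment $[x,y]$ exits $B_r$ through a point $z \in \partial B_r$ with $|x-z| \le |x-y|$. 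Since $a(z)=0=a(y)$ by the boundary vanishing hypothesis and by definition of the extension, we have
\[
\frac{|a(x)-a(y)|}{|x-y|^\alpha} = \frac{|a(x)-a(z)|}{|x-y|^\alpha} \le [a]_{C^{0,\alpha}(\overline{B_r})}\,\frac{|x-z|^\alpha}{|x-y|^\alpha} \le [a]_{C^{0,\alpha}(\overline{B_r})}.
\]
Taking suprema gives one direction of the seminorm identity; the reverse direction holds trivially since $\overline{B_r}\subset \mathbb{R}^n$.

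Combining both pieces yields $\|a\|_{C^{0,\alpha}(\mathbb{R}^n)}=\|a\|_{C^{0,\alpha}(\overline{B_r})}$, which in particular shows the extension belongs to $C^{0,\alpha}(\mathbb{R}^n)$. There is no real obstacle here; the only ingredient beyond the definitions is the convexity of the ball, which guarantees the existence of the intermediate boundary point $z$ in the mixed case.
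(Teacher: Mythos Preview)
Your proof is correct and follows essentially the same approach as the paper's: both reduce to the mixed case $x\in B_r$, $y\in\mathbb{R}^n\setminus B_r$, pick the point $z\in\partial B_r$ on the segment $[x,y]$, and use $a(z)=a(y)=0$ together with $|x-z|\le|x-y|$ to bound the difference quotient by the H\"older seminorm on $\overline{B_r}$.
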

\begin{proof}
Let $L=\sup_{x, y\in B_r, x\ne y}\frac{|a(x)-a(y)|}{|x-y|^\alpha}$. Since $a\in C(\overline{B_r})$, we have 
\begin{equation}
\label{app-Holder}
|a(x)-a(y)|\le L|x-y|^\alpha,   
\end{equation}
for all $x,y\in \overline{B_r}$. Let $x,y\in \R^n$, and let us show \eqref{app-Holder}. First if both $x,y\in B_r$ or both $x,y\in \mathbb{R}^n\setminus B_r$, \eqref{app-Holder} clearly holds.  The only case we have to consider is when $x\in B_r$ and $y\in \mathbb{R}^n\setminus B_r$. In this case, we consider the straight-line segment connecting $x$ and $y$.  It intersects $\partial B_r$ at some point $z\in \p B_r$ so that $|x-z|\le |x-y|$. Using \eqref{app-Holder}, we get 
\[
|a(x)-a(y)|=|a(x)-a(z)|\le L|x-z|^\alpha\le L|x-y|^\alpha.
\]
The result follows. 
\end{proof}

\section{Boundary determination}

\label{app_boundary_determination}

In the proofs of Theorem \ref{thm_main} and Theorem \ref{thm_main_2} we need the following boundary determination result, which is essentially known, see \cite{Brown_2001}, \cite{Brown_Salo_2006}, \cite[Appendix]{Guillarmou_Tzou_2011},  \cite[Appendix A]{FKOU_2021}, \cite{Krup_Uhlmann_nonlinear_mag} and references therein, and presented here for the completeness and convenience of the reader.

\begin{prop}
\label{prop_app_boundary_deter}
Let $(M,g)$ be a compact smooth Riemannian manifold of dimension $n\ge 2$ with smooth boundary. Let $q\in C(M)$, $\lambda>0$, and assume that $\lambda^2$ is not a Dirichlet eigenvalue of $-\Delta_g$ in $M$. If 
\begin{equation}
\label{eq_app2_1}
 \int_M q u_1u_2u_3u_4dV_g=0,
\end{equation}
for all $u_j\in C^{2,\alpha}(M)$ such that 
\begin{equation}
\label{eq_app2_2}
(-\Delta_g-\lambda^2)u_j=0\quad \text{in} \quad M^{\emph{\text{int}}}, 
\end{equation}
$j=1,\dots, 4$, then $q|_{\p M}=0$. 
\end{prop}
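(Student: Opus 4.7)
Since $q$ is continuous, it suffices to fix an arbitrary $x_0\in\p M$ and show $q(x_0)=0$. Introduce boundary normal coordinates $(x',x_n)$ centered at $x_0$, so that $M$ corresponds locally to $\{x_n\ge 0\}$, $\p M=\{x_n=0\}$, $g^{nn}\equiv 1$, $g^{nj}\equiv 0$ for $j<n$, and $g^{ij}(0)=\delta_{ij}$. The plan is to construct, for a large parameter $\tau\gg 1$, an exact solution $u_\tau\in C^{2,\alpha}(M)$ of $(-\Delta_g-\lambda^2)u_\tau=0$ that is a boundary-layer quasimode concentrating at $x_0$. Since the coefficients are real and $\lambda\in\R$, $\overline{u_\tau}$ is also a solution, so substituting $u_1=u_2=u_\tau$, $u_3=u_4=\overline{u_\tau}$ into \eqref{eq_app2_1} gives
\[
\int_M q(x)\,|u_\tau(x)|^4\,dV_g=0\qquad\text{for all large }\tau.
\]

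The quasimode I would take has the Gaussian-beam form
\[
v_\tau(x)=\tau^{(n+1)/8}\,\chi(x)\,e^{i\tau\varphi(x)}\sum_{j=0}^{N}\tau^{-j}a_j(x),
\]
where $\chi\in C_0^\infty$ equals $1$ in a neighborhood of $x_0$, and the phase is
\[
\varphi(x)=\omega\cdot x'+ix_n+\tfrac{i}{2}\bra Ax',x'\cet,
\]
for a fixed unit vector $\omega\in\R^{n-1}$ and a complex symmetric matrix $A$ with $\Im A>0$, chosen so that the eikonal $|\nabla_g\varphi|_g^2=0$ holds to sufficient order at $x_0$ (which leads to a Riccati-type constraint on $A$, solvable by standard Gaussian-beam theory). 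The amplitudes $a_j$ are obtained by solving the associated transport equations with $a_0(0)\neq 0$. A direct computation, together with $\Im\varphi\ge c(x_n+|x'|^2)$, yields $\|(-\Delta_g-\lambda^2)v_\tau\|_{L^2(M)}=O(\tau^{-N+c})$ for any preassigned $N$; the prefactor $\tau^{(n+1)/8}$ is tuned so that $\|v_\tau\|_{L^4(M)}=O(1)$, which follows immediately from the change of variables $x_n=t/\tau$, $x'=y'/\sqrt{\tau}$.

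Because $\lambda^2$ is not a Dirichlet eigenvalue, one solves $(-\Delta_g-\lambda^2)r_\tau=-(-\Delta_g-\lambda^2)v_\tau$ in $M^{\text{int}}$ with $r_\tau|_{\p M}=0$. Elliptic regularity up to the boundary and Sobolev embedding give $\|r_\tau\|_{C^{2,\alpha}(M)}=O(\tau^{-K})$ for any fixed $K$, upon choosing $N$ large. Then $u_\tau:=v_\tau+r_\tau\in C^{2,\alpha}(M)$ is an exact solution of the Helmholtz equation. Expanding
\[
|u_\tau|^4=|v_\tau|^4+\mathcal{O}\bigl(\|r_\tau\|_{L^\infty}\|v_\tau\|_{L^\infty}^3\bigr),
\]
and using $\|v_\tau\|_{L^\infty}=O(\tau^{(n+1)/8})$ shows that the correction contributes negligibly to the integral. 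A Laplace-method argument (same rescaling as above, combined with dominated convergence) then yields
\[
\lim_{\tau\to\infty}\int_M q\,|v_\tau|^4\,dV_g=c_0\,q(x_0),
\]
with $c_0>0$ explicit, depending only on $|a_0(0)|$ and $\Im A$. Since the left-hand side vanishes for all $\tau$, this forces $q(x_0)=0$, and as $x_0\in\p M$ was arbitrary we conclude $q|_{\p M}=0$.

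The main technical obstacle is the construction of the boundary quasimode $v_\tau$: one needs a phase $\varphi$ that concentrates the mass at $x_0$ on scale $\tau^{-1/2}$ tangentially and $\tau^{-1}$ normally, while satisfying the eikonal to sufficient order so that the residual is of polynomial smallness. Once $v_\tau$ is in hand, the solvability at a non-eigenvalue $\lambda^2$, the elliptic estimates for the correction, and the passage to the limit are routine. Constructions of precisely this type can be found in the boundary determination arguments of \cite{Brown_Salo_2006, Guillarmou_Tzou_2011, FKOU_2021, Krup_Uhlmann_nonlinear_mag}, and I would follow their setup with only minor bookkeeping changes.
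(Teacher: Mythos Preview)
Your overall strategy---build a boundary-concentrated approximate solution, correct it using that $\lambda^2$ is not a Dirichlet eigenvalue, substitute into \eqref{eq_app2_1}, and pass to the limit---is what the paper does, but there is a real gap in your execution, and the paper handles the same step differently.

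The gap is the claim that $\|(-\Delta_g-\lambda^2)v_\tau\|_{L^2(M)}=O(\tau^{-N+c})$ for every preassigned $N$ with the quadratic phase $\varphi(x)=\omega\cdot x'+ix_n+\tfrac{i}{2}\langle Ax',x'\rangle$. A Riccati choice of $A$ makes the complex eikonal vanish only to a fixed finite order at $x_0$; in the effective region $|x'|\sim\tau^{-1/2}$, $x_n\sim\tau^{-1}$ the tangential part of the residual $|\nabla_g\varphi|^2_g$ is then of size $\sim\tau^{-1}$, so the leading term $\tau^2|\nabla_g\varphi|^2_g\,a_0$ carries an unremovable positive power of $\tau$ that finitely many transport corrections cannot drive to $O(\tau^{-N})$ for all $N$. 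The references you cite (\cite{Brown_Salo_2006,Guillarmou_Tzou_2011,FKOU_2021,Krup_Uhlmann_nonlinear_mag}) do \emph{not} produce an $O(\tau^{-N})$ remainder; they use the simpler phase $\tau'\cdot x'+ix_n$ together with a cutoff shrinking at an intermediate scale and obtain only a fixed small power of decay. Consequently your next step---$\|r_\tau\|_{C^{2,\alpha}}=O(\tau^{-K})$, hence an $L^\infty$-small remainder so that all four factors may be taken equal to $u_\tau$ or $\overline{u_\tau}$---does not follow from the construction as written.

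The paper avoids this. It uses the Brown--Salo ansatz $v(x;\mu)=\mu^{-\frac{\alpha(n-1)}{2}-\frac12}\eta(x/\mu^\alpha)e^{\frac{i}{\mu}(\tau'\cdot x'+ix_n)}$ with $\alpha=1/3$, normalized in $L^2$ rather than $L^4$, and estimates the Dirichlet-corrected remainder only in $L^2$, obtaining $\|r\|_{L^2}=O(\mu^{1/12})$ via the negative-order elliptic bound $\|r\|_{H^{1/2+\varepsilon}}\lesssim\|(-\Delta_g-\lambda^2)v\|_{H^{-3/2+\varepsilon}}$ from \cite{FKOU_2021}. Because this control is too weak to take all four solutions concentrated at once, the paper proceeds in two steps: first set $u_1=v+r$, $u_2=\overline{u_1}$ and keep $u_3,u_4$ as arbitrary (hence $L^\infty$-bounded) solutions, pass to the limit to get $q(x_0)u_3(x_0)u_4(x_0)=0$; then choose $u_3,u_4$ to be another such oscillating family with an independent small parameter, which forces $q(x_0)=0$. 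To salvage your one-step approach you would need a genuinely higher-order phase construction going beyond the quadratic ansatz (and beyond what your references supply), or else accept only $L^2$ smallness of $r$ and restructure the argument as the paper does.
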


\begin{proof}
We shall follow \cite{Brown_2001}, \cite{Brown_Salo_2006}, constructing an explicit family of functions $v_\mu$, whose boundary values have a highly oscillatory behavior as $\mu\to 0$, while becoming increasingly concentrated near a given point on the boundary of $M$, and we shall follow \cite[Appendix A]{FKOU_2021}, correcting $v_\mu$ into an exact solution to \eqref{eq_app2_2}. 

Let $x_0\in \p M$ and let $(x_1,\dots, x_n)$ be the boundary normal coordinates centered at $x_0$ so that in these coordinates, $x_0 =0$, the boundary $\p M$ is given by $\{x_n=0\}$, and $M^{\text{int}}$ is given by $\{x_n > 0\}$. In the local coordinates, $T_{x_0}\p M=\R^{n-1}$, equipped with the Euclidean metric. 
The unit tangent vector $\tau$ is then given by $\tau=(\tau',0)$ where $\tau'\in \R^{n-1}$, $|\tau'|=1$.  
Let $\eta\in C^\infty_0(\R^n;\R)$ be such that $\mathrm{supp}(\eta)$ is in a small neighborhood of $0$, $\eta(0)\ne 0$,  and 
\[
\int_{\R^{n-1}}\eta(x',0)^2dx'=1.
\]
Let $\frac{1}{3}\le \alpha\le \frac{1}{2}$, and following \cite{Brown_Salo_2006}, we let 
\begin{equation}
\label{eq_app2_3_0}
v(x;\mu)= \mu^{-\frac{\alpha(n-1)}{2}-\frac{1}{2}}\eta\bigg(\frac{x}{\mu^{\alpha}}\bigg)e^{\frac{i}{\mu}(\tau'\cdot x'+ ix_n)}, \quad 0<\mu\ll 1,
\end{equation}
 in the boundary normal coordinates,  so that  $v\in C^\infty(M)$, with $\mathrm{supp}(v)$ in  $\mathcal{O}(\mu^{\alpha})$ neighborhood of $x_0=0$. Here $\tau'$ is viewed as a covector.  A direct computation gives that 
\begin{equation}
\label{eq_app2_3}
\|v(\cdot;\mu)\|_{L^2(M)}=\mathcal{O}(1),  
\end{equation}
as $\mu\to 0$. We look for a solution $u_1$ to \eqref{eq_app2_2} in the form
\begin{equation}
\label{eq_app2_3_1}
u_1(\cdot;\mu)=v(\cdot;\mu)+r(\cdot;\mu),
\end{equation}
 where $r$  is  the solution to the Dirichlet problem, 
\begin{equation}
\label{eq_app2_4}
\begin{cases}
(-\Delta_g-\lambda^2) r= (-\Delta_g-\lambda^2)  v& \text{in}\quad M^{\text{int}},\\
r|_{\p M}=0. 
\end{cases}
\end{equation}
By boundary elliptic regularity, we have $r\in H^1_0(M^{\text{int}})\cap C^\infty(M)$. By \cite[Theorem A.3]{FKOU_2021}, we get 
\begin{equation}
\label{eq_app2_5}
\begin{aligned}
\|r\|_{L^2(M)}\le \|r\|_{H^{1/2+\varepsilon}(M^{\text{int}})}\le C\|(-\Delta_g-\lambda^2)  v\|_{H^{-3/2+\varepsilon}(M^{\text{int}})}\\
\le C\|\Delta_g  v\|_{H^{-3/2+\varepsilon}(M^{\text{int}})}+C\lambda^2\|v\|_{H^{-1}(M^{\text{int}})},
\end{aligned}
\end{equation}
for $0<\varepsilon<1/2$. 
It is shown in \cite{FKOU_2021} that with $\alpha=1/3$ and $\varepsilon=1/12$, 
\begin{equation}
\label{eq_app2_6}
\|\Delta_g v\|_{H^{-3/2+\varepsilon}(M^{\text{int}})}=\mathcal{O}(\mu^{1/12}),
\end{equation}
see formula (A.14) in \cite{FKOU_2021}, 
and that 
\begin{equation}
\label{eq_app2_7}
\|v\|_{H^{-1}(M^{\text{int}})}=\mathcal{O}(\mu^{1-\alpha})=\mathcal{O}(\mu^{2/3}),
\end{equation}
see formula (A.19) in \cite{FKOU_2021} and discussion after it. Combining \eqref{eq_app2_5}, \eqref{eq_app2_6}, and \eqref{eq_app2_7}, we get 
\begin{equation}
\label{eq_app2_8}
\|r\|_{L^2(M)}=\mathcal{O}(\mu^{1/12}). 
\end{equation}
Substituting $u_1$ given by \eqref{eq_app2_3_1} and $u_2=\overline{u_1}$ into the integral identity \eqref{eq_app2_1}, and using \eqref{eq_app2_3},  \eqref{eq_app2_8},  we obtain that 
\begin{equation}
\label{eq_app2_9}
\int_M(qu_3u_4)|v|^2dV_g=-\int_M(qu_3u_4)(v\overline{r}+r\overline{v}+|r|^2)dV_g=o(1),
\end{equation}
as $\mu\to 0$. It is computed in \cite{FKOU_2021}, see formula (A.24), that 
\begin{equation}
\label{eq_app2_10}
\lim_{\mu\to 0}\int_M(qu_3u_4)|v|^2dV_g=\frac{1}{2}q(0)u_3(0)u_4(0). 
\end{equation}
It follows from \eqref{eq_app2_9} and \eqref{eq_app2_10} that 
\begin{equation}
\label{eq_app2_11}
q(0)u_3(0)u_4(0)=0, 
\end{equation}
where $u_3, u_4\in C^{2,\alpha}(M)$ are any solutions of \eqref{eq_app2_2}. Letting $u_3(\cdot; h)=v(\cdot;h)+r(\cdot;h)$, $0<h\ll 1$, where $v(\cdot;h)$ is given by \eqref{eq_app2_3_0} with $\mu$ replaced by $h$, $r(\cdot;h)$ is the solution to \eqref{eq_app2_4} with $v=v(\cdot;h)$, and letting $u_4=\overline{u_3}$, we get from \eqref{eq_app2_11} that 
$q(0)=0$. Here we have used that $\eta(0)\ne 0$. 
\end{proof}

\section{Example of an infinite set of linearly independent admissible perturbations}
\label{sec_example}

Let $M$ be a smooth compact Riemannian manifold of dimension $n\ge 2$ with boundary. We write $C^{0,\alpha}(M)$, $0<\alpha<1$, for the space of H\"older continuous functions on $M$, equipped with the norm,
\[
\|\varphi\|_{C^{0,\alpha}(M)}=\|\varphi\|_{L^\infty(M)}+\sup_{x,y\in M, x\ne y}\frac{|\varphi(x)-\varphi(y)|}{d(x,y)^\alpha},
\]
where $d(x,y)$ is the Riemannian distance between $x$ and $y$, see \cite[Chapter 2, \S 3, \S 9]{Aubin_book_1982}, \cite[Section 13.8]{Taylor_PDE_III}. We define the $C^1$-norm on $M$ by
\[
\|\varphi\|_{C^1(M)}=\|\varphi\|_{L^\infty(M)}+\|\nabla_g \varphi(x)\|_{L^\infty(M)},
\]
where $\nabla_g$ is the gradient with respect to the Riemannian metric $g$. We have the continuous embedding $C^1(M)\subset C^{0,\alpha}(M)$, $0<\alpha<1$, i.e. there exists $B>0$ such that 
\begin{equation}
\label{app_exam_1}
\|\varphi\|_{C^{0,\alpha}(M)}\le B\|\varphi\|_{C^1(M)}, \quad \varphi\in C^1(M).
\end{equation}

In what follows we refer to the definition and properties of simple manifolds to \cite[Section 3.8]{PSU_book}. We have the following result. 
\begin{prop}
Let $(M,g)$ be simple. The set of admissible perturbations $\mathcal{A}(2B)$, with $B$ given in \eqref{app_exam_1}, contains an infinite set of linearly independent admissible perturbations.  
\end{prop}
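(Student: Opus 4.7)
The plan is to observe that by \eqref{app_exam_1}, any $\varphi \in C^1(M)$ satisfying $\|\nabla_g \varphi\|_{L^\infty(M)} \leq \|\varphi\|_{L^\infty(M)}$ automatically obeys
\[
\|\varphi\|_{C^{0,\alpha}(M)} \leq B\|\varphi\|_{C^1(M)} \leq 2B\|\varphi\|_{L^\infty(M)},
\]
so $N(\varphi) \leq 2B$. Thus it suffices to exhibit an infinite linearly independent family in $C^1(M)$ whose gradient is controlled by the sup norm.

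The construction I have in mind is as follows. Pick any smooth non-constant function on $M$ and shift by a constant to obtain $\psi \in C^\infty(M)$ with $\psi(x_0) = 0$ at some point $x_0 \in M$; let $G := \|\nabla_g \psi\|_{L^\infty(M)} > 0$. For every $k \in \N$ set
\[
\varphi_k(x) := \cos\bigl(\psi(x)/(kG)\bigr).
\]
Then $\|\varphi_k\|_{L^\infty(M)} = 1$ since the value $1$ is attained at $x_0$, while the chain rule gives $|\nabla_g \varphi_k| \leq G/(kG) = 1/k \leq 1$. By the opening observation, $\varphi_k \in \mathcal{A}(2B)$ for every $k$.

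For linear independence: since $M$ is connected and $\psi$ is continuous and non-constant, its image $\psi(M)$ is a non-degenerate compact interval $[a,b]$ with $a < b$. A finite relation $\sum_j \alpha_j \varphi_{k_j} \equiv 0$ on $M$ then forces the real-analytic function
\[
F(t) := \sum_j \alpha_j \cos\bigl(t/(k_j G)\bigr)
\]
to vanish on $[a,b]$, hence on all of $\R$. Because the frequencies $1/(k_j G)$ are pairwise distinct positive numbers, the classical linear independence of cosines on $\R$ forces $\alpha_j = 0$ for every $j$, so the $\varphi_k$ are linearly independent in $C^{0,\alpha}(M)$.

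There is no serious technical obstacle. The only delicate point is recognizing that the embedding \eqref{app_exam_1} reduces the Hölder admissibility condition $N(\varphi) \leq 2B$ to the purely $C^1$ condition $\|\nabla_g \varphi\|_\infty \leq \|\varphi\|_\infty$, and that such a condition is easy to satisfy for infinitely many independent functions by lowering the cosine frequency while anchoring the supremum at $1$ via the choice $\psi(x_0) = 0$. Simplicity of $(M,g)$ is used only to guarantee that $M$ is a connected smooth compact manifold on which a non-constant smooth function exists, which is automatic.
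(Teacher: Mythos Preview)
Your argument is correct. The key reduction---that \eqref{app_exam_1} turns the admissibility condition $N(\varphi)\le 2B$ into the purely $C^1$ condition $\|\nabla_g\varphi\|_{L^\infty}\le\|\varphi\|_{L^\infty}$---is exactly what the paper also exploits, but from that point on the two constructions diverge.

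The paper uses simplicity in an essential way: it enlarges $(M,g)$ to a simple $(\overline{U},g)$, picks $\omega\in U\setminus M$, and passes to polar normal coordinates $(r,\theta)$ centered at $\omega$, so that $g=dr^2\oplus g_1(r,\theta)$ and any function of $r$ alone has $|\nabla_g p|=|\partial_r p|$. It then builds polynomials $p(r)=\sum_{k=0}^N a_k r^k$ with $ka_k\le a_{k-1}$, giving $|\partial_r p|\le p$ and hence $p\in\mathcal{A}(2B)$; the explicit family is $\{\sum_{k=0}^N r^k/k!\}_{N\ge 1}$. Your route is more elementary and strictly more general: you take an arbitrary non-constant $\psi\in C^\infty(M)$ normalized so that $\psi(x_0)=0$, and set $\varphi_k=\cos(\psi/(kG))$, which pins $\|\varphi_k\|_{L^\infty}=1$ while forcing $\|\nabla_g\varphi_k\|_{L^\infty}\le 1/k$. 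Linear independence then follows from the analytic continuation of $\sum_j\alpha_j\cos(t/(k_jG))$ off the nondegenerate interval $\psi(M)$. In particular, your argument never touches the exponential map and works verbatim on any connected smooth compact Riemannian manifold with boundary; simplicity is irrelevant. The paper's construction, by contrast, yields geometrically explicit polynomial examples tied to the distance function $r=d(\omega,\cdot)$.
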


\begin{proof}
First we have $(M,g)\subset\subset (U,g)$ where $(\overline{U},g)$ is simple. Let $\omega\in U\setminus M$ and let $(r,\theta)$, $r>0$ and $\theta\in \mathbb{S}^{n-1}$, be the polar normal coordinates in $(U,g)$ with center $\omega$. In these coordinates the metric $g$  has the form,
\begin{equation}
\label{app_exam_2}
g(r,\theta)=\begin{pmatrix}
1& 0\\
0& g_1(r,\theta)
\end{pmatrix},
\end{equation}
where $g_1(r,\theta)$ is some $(n-1)\times(n-1)$ positive definite matrix. Consider the polynomial
\[
p(r, \theta)=\sum_{k=0}^N a_k r^k \in C^\infty(M),
\]
where $a_k\ge 0$, $k=0,1,\dots N$,  
\begin{equation}
\label{app_exam_3}
k a_k\le a_{k-1}, \quad k=1,\dots N,
\end{equation}
 and $N\in \N$. It follows from \eqref{app_exam_2} that $\nabla_g p=(\p_r p, 0, \dots, 0)$. Thus, using \eqref{app_exam_2} and \eqref{app_exam_3}, we get
 \[
 |\nabla_g p|_g= |\p_r p|=\sum_{k=1}^N a_k k  r^{k-1}\le \sum_{k=1}^N a_{k-1} r^{k-1}\le p.
 \] 
Hence, $\|p\|_{C^1(M)}\le 2 \|p\|_{L^\infty(M)}$. Combining this with \eqref{app_exam_1}, we get $\|p\|_{C^{0,\alpha}(M)}\le 2 B \|p\|_{L^\infty(M)}$, showing that $p\in \mathcal{A}(2B)$. Thus, in particular, we have $\{\sum_{k=0}^N \frac{r^k}{k!}:N=1,2,\dots\}\subset  \mathcal{A}(2B)$.

\end{proof}

\section{Discussion of the geometric assumption (H1) in Theorem \ref{thm_main_2}}

\label{sec_discussion_examples}

\subsection{Local result} The following result demonstrates that locally, condition (iv) in assumption (H1) can be derived from condition (iii).
\begin{prop}
\label{prop:sec_curvatures}
Let $(N, g)$ be a smooth compact Riemannian manifold without boundary whose sectional curvatures are bounded from above by $\kappa > 0$. Let $\rho < \min(\mathrm{Conv}(N), \frac{\pi}{\sqrt{\kappa}})$, where $\mathrm{Conv}(N) > 0$ is the convexity radius of $N$. There is a constant $C = C(N, g, \rho)$ such that for all $x_0 \in N$ and all pairs of geodesics $\gamma, \eta: (-\rho, \rho) \to N$ with $\gamma(0) = \eta(0) = x_0$,
\[
	d(\gamma(t), \eta(\tau)) \geq C|t|\sin\theta, \quad d(\gamma(t), \eta(\tau)) \geq C|\tau |\sin\theta,  \quad t, \tau \in (-\rho, \rho),
\]
where $\theta$ is the angle of intersection between $\gamma$ and $\eta$.
\end{prop}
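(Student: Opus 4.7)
\emph{Proof proposal.} The idea is to reduce the problem to an explicit calculation in the model space $M^2_\kappa$ of constant sectional curvature $\kappa$ by means of the hinge version of Toponogov's comparison theorem for upper curvature bounds. The hypothesis $\rho<\mathrm{Conv}(N)$ ensures that $B_\rho(x_0)$ is a strongly convex ball, so that for any $t,\tau\in(-\rho,\rho)$ the hinge at $x_0$ formed by the subsegments of $\gamma$ of length $|t|$ and of $\eta$ of length $|\tau|$ (with enclosed angle equal to $\theta$ or $\pi-\theta$, both having sine equal to $\sin\theta$), together with the minimizing geodesic joining $\gamma(t)$ and $\eta(\tau)$, exist and are contained in $B_\rho(x_0)$; the hypothesis $\rho<\pi/\sqrt\kappa$ ensures the comparison hinge in $M^2_\kappa$ is non-degenerate.

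By Toponogov's hinge comparison theorem, valid under $\sec\le\kappa$ inside a convex ball,
\[
d(\gamma(t),\eta(\tau))\ge d_{M^2_\kappa}(\tilde\gamma(t),\tilde\eta(\tau)),
\]
where $\tilde\gamma,\tilde\eta$ are unit-speed geodesics in $M^2_\kappa$ emanating from a common point $\tilde x_0$ at angle $\theta$. Letting $L$ denote the perpendicular distance from $\tilde\gamma(t)$ to the geodesic line $\tilde\eta$, we have $d_{M^2_\kappa}(\tilde\gamma(t),\tilde\eta(\tau))\ge L$ for every $\tau\in\mathbb{R}$. The spherical law of sines applied to the right spherical triangle whose hypotenuse of length $|t|$ runs from $\tilde x_0$ to $\tilde\gamma(t)$, whose angle at $\tilde x_0$ is $\theta$, and whose opposite side has length $L$, gives
\[
\sin(\sqrt\kappa L)=\sin(\sqrt\kappa|t|)\sin\theta.
\]

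Since the right-hand side lies in $[0,1]$, we have $\sqrt\kappa L\in[0,\pi/2]$. Using $\sin x\le x$ for $x\ge 0$ on the left and the elementary inequality $\sin(\sqrt\kappa|t|)\ge(\sin(\sqrt\kappa\rho)/(\sqrt\kappa\rho))\sqrt\kappa|t|$ on the right (valid because $y\mapsto\sin y/y$ is decreasing on $(0,\pi)$ and $\sqrt\kappa|t|\le\sqrt\kappa\rho<\pi$), we conclude
\[
L\ge \frac{\sin(\sqrt\kappa\rho)}{\sqrt\kappa\rho}\,|t|\sin\theta.
\]
Setting $C=\sin(\sqrt\kappa\rho)/(\sqrt\kappa\rho)>0$, which depends only on $N,g,\rho$ (and $\kappa$, which is determined by $N,g$), yields $d(\gamma(t),\eta(\tau))\ge C|t|\sin\theta$, and the symmetric inequality $d(\gamma(t),\eta(\tau))\ge C|\tau|\sin\theta$ follows by swapping the roles of $\gamma$ and $\eta$.

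The main technical point I expect is simply verifying the precise form of Toponogov's hinge comparison used above, namely the local CAT($\kappa$) inequality for geodesic triangles with vertices in the convex ball $B_\rho(x_0)\subset N$. This is classical but has several inequivalent formulations in the literature to navigate, and one must check that the perimeter and side-length restrictions are automatically implied by $\rho<\min(\mathrm{Conv}(N),\pi/\sqrt\kappa)$. As a fallback, one could instead apply Rauch's comparison theorem directly to the Jacobi field along $\gamma$ obtained by varying the initial direction of $\gamma$ toward $\dot\eta(0)$, which after integration yields the same lower bound via the explicit Jacobi field in $M^2_\kappa$.
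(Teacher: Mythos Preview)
Your argument is correct and takes a genuinely different route from the paper. The paper works in normal coordinates centred at $x_0$: there the geodesics $\gamma$ and $\eta$ are straight lines, so the Euclidean distance $d_{g_0}(\gamma(t),\eta(\tau))\ge |t|\sin\theta$ is immediate; the Riemannian distance is then compared to the Euclidean one via the chain of metric inequalities $g\ge g_\kappa\ge Cg_0$ on $B_\rho(x_0)$, the first coming from the metric comparison theorem for $\sec\le\kappa$ (Lee, Theorem 11.10) and the second by compactness of the spherical cap. You instead invoke the hinge version of the CAT$(\kappa)$ comparison to pass directly to the model space $M_\kappa^2$, and then compute the perpendicular distance there by spherical trigonometry. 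Both proofs rest on the same underlying Rauch-type input, but your route is more intrinsically geometric and produces the explicit constant $C=\sin(\sqrt\kappa\rho)/(\sqrt\kappa\rho)$, whereas the paper's coordinate argument obtains its constant non-constructively from compactness (though, if one unwinds the inequality $g_\kappa\ge Cg_0$ in polar form, the same constant emerges). Your caveat about the CAT$(\kappa)$ hypotheses is well placed: with the stated bound $\rho<\pi/\sqrt\kappa$ the triangle perimeter may reach $4\rho$, which need not lie below $2\pi/\sqrt\kappa$, so one should either restrict to $\rho<\pi/(2\sqrt\kappa)$ or, as you indicate, bypass the CAT$(\kappa)$ formalism and derive the hinge inequality directly from Rauch's theorem, which only requires side lengths below the conjugate radius and hence works for all $\rho<\pi/\sqrt\kappa$.
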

\begin{proof}
Let $(x^i)$ denote normal coordinates around $x_0$ with respect to the metric $g$ on $U = B_\rho(x_0)$. In these coordinates, the geodesics $\gamma$ and $\eta$ are simply given by lines through the origin. Let $\alpha:[0, T] \to U$ be the shortest path between $\gamma(t)$ and $\eta(\tau)$, which is unique and entirely contained in $U$ since $\rho < \mathrm{Conv}(N)$. The distance between $\gamma(t)$ and $\eta(\tau)$ is then
\[
	d_g(\gamma(t), \eta(\tau)) = \int_0^T |\dot{\alpha}(t)|_g dt
\]
We parametrise $\alpha$ by arclength with respect to the coordinates $(x^i)$ equipped with the standard flat metric $g_0$ of $\R^n$, so that $|\dot{\alpha}(t)|_{g_0} = 1$. With respect to $g_0$, the shortest path from $\gamma(t)$ to $\eta$ intersects $\eta$ at a right angle. Therefore,
\[
	T = d_{g_0}(\gamma(t), \eta(\tau)) \geq d_{g_0}(\gamma(t), \eta) = |t| \sin \theta
\]
for all $t \in (-\rho, \rho)$. Let $g_\kappa$ be a metric of constant sectional curvature $\kappa$ on $U$ which exists since $\rho < \frac{\pi}{\sqrt{\kappa}}$. The manifold $(U, g_\kappa)$ is isometric to a spherical cap, and hence $g_\kappa$ can be extended to the closure of $U$. By compactness, there is a constant $C = C(\kappa) > 0$ such that $g_\kappa \geq C g_0$ as bilinear forms. By \cite[Theorem 11.10]{Lee_Riem_mfld_2018}, we have $g \geq g_\kappa$ on $U$ since $\rho < \frac{\pi}{\sqrt{\kappa}}$. Therefore,
\[
	d_g(\gamma(t), \eta(\tau)) \geq \int_0^T |\dot{\alpha}(t)|_{g_\kappa} dt \geq C\int_0^T |\dot{\alpha}(t)|_{g_0} dt \geq C |t| \sin\theta.
\]
\end{proof}

\subsection{Discussion related to Example  \ref{example_simple_manifold_H1}}
Let $(M,g)$ be a simple Riemannian manifold. We shall demonstrate that  it satisfies the geometric assumption
(H1). Indeed, consider the bundle
\[
	SM \times_\pi SM = \{((x, v), (y, w)) \in SM \times SM : x = y\}.
\]
As $M$ is a simple manifold, by applying the stopped geodesic flow in backward and forward time, we can see $SM \times_\pi SM$ as a parameter space for pairs of geodesics on $M$ that intersect at a single point. Indeed, to each pair  $((x,v), (x,w)) = (x,v,w)$, we associate the two non-tangential geodesics $\gamma_{x,v}$ and $\gamma_{x, w}$.  

For $0 < \theta_0 < \pi/2$, let $E_{\theta_0} \subset SM \times_\pi SM$ be the set of pairs $(x,v,w)$ such that $|g_x(v, w)| \geq \cos \theta_0$. The set $E_{\theta_0}$ is compact and consists of geodesics that intersect at an angle $\theta \in [\theta_0, \pi/2]$. 

For each $x_0 \in M^{\text{int}}$, choose a pair $(x_0, v, w) \in E_{\theta_0}$ and let $\gamma = \gamma_{x_0, v}$ and $\eta = \gamma_{x_0, w}$. Let us show that such pairs of geodesics satisfy the assumption (H1).

First, $\gamma(0) = \eta(0) = x_0$  and so (i) holds (with shifted times so that $t_0 = \tau_0 = 0$). As $M$ is simple, it is non-trapping and there exists a $T > 0$ such that all geodesics have length at most $T$ so (ii) holds. By our choice of pairs within $E_{\theta_0}$, (iii) holds with $\theta_0$ as above. 

It remains to show (iv). Embed $(M,g)$ in a smooth compact Riemannian manifold without boundary $(N, g)$ whose sectional curvatures are bounded above by $\kappa > 0$. Let $\rho$ be as in Proposition \ref{prop:sec_curvatures} and consider the function $F: E_{\theta_0} \to [0, \infty]$ given by
\[
F(x, v, w) = \dist(\gamma_{x,v}(I_{x,v} \setminus (-\rho, \rho)), \gamma_{x,w}(I_{x,w}))
\]
where $I_{x,v}$ is the closed interval on which $\gamma_{x,v}$ is defined in $M$, $F(x,v,w) = \infty$ if $I_{x,v} \subset (-\rho, \rho)$, and $\mathrm{dist}(A, B) = \inf \{d_g(a, b) : a \in A, b \in B\}$ for any pair of subsets $A$ and $B$ of $M$. By continuous dependence of the geodesics $\gamma_{x,v}$ on $(x,v) \in SM$, the set 
\[
	V_\rho = \{(x,v,w) \in E_{\theta_0} : I_{x,v} \subset (-\rho, \rho)\}
\]
is open and $F$ is continuous on $E_{\theta_0} \setminus V_\rho$. It follows that the sets $F^{-1}((y, \infty])$ are open for all $y \in \R$, that is, $F$ is lower semicontinuous.

The function $(x,v,w) \mapsto \min(F(x,v,w), F(x,w,v))$ is then also lower semicontinuous. Therefore, by compactness of $E_{\theta_0}$ and the fact that geodesics in $M$ intersect at most once, there is $c = c(M, g, \rho) > 0$ such that
\[
	d(\gamma_{x,v}(t), \gamma_{x,w}(\tau)) \geq c
\]
whenever $|t| \geq \rho$ or $|\tau| \geq \rho$ for all $(x,v,w) \in E_{\theta_0}$. This holds because $F(x, v, w) \geq c$ if and only if $d(\gamma_{x,v}(t), \gamma_{x, w}(\tau)) \geq c$ for all $|t| \geq \rho$. Hence, by choosing $r < \min(c, \text{Inj}(M)/2)$, we see that if $d(\gamma(t), \eta(\tau))  \leq r$, then $|t| < \rho$ and $|\tau| < \rho$. Proposition \ref{prop:sec_curvatures} then guarantees there is $C=C(M,g,\rho) > 0$ such that
\[
	|t| \leq \frac{d(\gamma(t), \eta(\tau))}{C \sin \theta_0}, \quad |\tau| \leq \frac{d(\gamma(t), \eta(\tau)) }{C \sin \theta_0}.
\]
Here we have also used that $\theta \in [\theta_0, \pi/2]$. Condition (iv) follows from choosing $r$ as above and $c_0 = (C \sin \theta_0)^{-1}$. Since $\rho$ only depends on $(M, g)$, the constants $r$, $c_0$, and $\theta_0$ can be chosen uniformly.

\subsection{Discussion related to Example \ref{example_cylinder_H1} }
Let $M=\mathbb{S}^1\times[0,a]$, $a>0$, be the cylinder with the usual flat metric. We shall show it satisfies the geometric assumption (H1). Indeed, 
let $x_0 = (e^{it_0}, s_0)\in M^{\text{int}}$ and for $\theta \in (0, \pi/2)$, consider the geodesics
	\begin{align*}
		\gamma_{x_0}(t) &= (e^{it_0}, t), \quad t \in [0, a], \\
		\eta^{\theta}_{x_0}(\tau) &= (e^{i(t_0 - s_0 \tan \theta + \tau \sin \theta)}, \tau \cos\theta), \quad \tau \in \left[0, \frac{a}{\cos \theta}\right].
	\end{align*}
The geodesics $\gamma_{x_0}$ and $\eta_{x_0}^\theta$ intersect at the point $x_0 = \gamma_{x_0}(s_0) = \eta_{x_0}^\theta(s_0/\cos\theta)$ with an intersection angle of $\theta$.  If $|- s_0 \tan \theta + \tau \sin \theta|<2\pi$ for all 
$\tau \in \left[0, \frac{a}{\cos \theta}\right]$, the geodesics $\gamma_{x_0}$ and $\eta_{x_0}^\theta$ do not intersect at any other points. This condition is guaranteed if we require 
\[
|- s_0 \tan \theta + \tau \sin \theta|\le 2 a\tan\theta<2\pi, 
\]
which implies that $0 < \theta < \arctan(\frac{\pi}{a})$. 

Let $\theta_0 = \frac{1}{4} \arctan(\frac{\pi}{a})$. If $\theta \in [\theta_0, 2\theta_0)$, then $\eta^\theta_{x_0}$ rotates at most half a turn around the cylinder. Unwrapping that half of the cylinder into $[0, \pi] \times [0, a]$, we see that the distance between $\gamma_{x_0}(t)$ and $\eta^\theta_{x_0}(\tau)$ is the same as the Euclidean distance. The shortest path from $\gamma_{x_0}(t)$ to the curve $\eta^\theta_{x_0}$ intersects $\eta^\theta_{x_0}$ at a right angle and therefore $d(\gamma_{x_0}(t), \eta^\theta_{x_0}) = |t - s_0| \sin \theta$ since $\gamma_{x_0}(s_0) = x_0$. Similarly, $d(\gamma, \eta^\theta_{x_0}(\tau)) = |\tau - s_0/\cos \theta| \sin \theta$ since $\eta^\theta_{x_0}(s_0/\cos \theta) = x_0$. Therefore,
\[
	d(\gamma_{x_0}(t), \eta_{x_0}^\theta(\tau)) \geq \sin \theta \max(|t - s_0|,|\tau - s_0/\cos \theta|),
\]
for all $t \in [0, a]$ and $ \tau \in \left[0, \frac{a}{\cos \theta}\right]$. 

Therefore, if we choose a pair of geodesics $\gamma = \gamma_{x_0}$ and $\eta = \eta_{x_0}^\theta$ with $\theta \in [\theta_0, 2\theta_0)$ for every point $x_0 \in M^{\text{int}}$, we get a family of pairs of geodesics that satisfies assumption (H1) with $\theta_0$ as above, $T = \frac{a}{\cos(2\theta_0)}$, $0 < r < \pi/2$, and $c_0 = (\sin \theta_0)^{-1}$.

\end{appendix}

\section*{Acknowledgements}

The research of K.K.~is partially supported by the National Science Foundation (DMS 2109199). The research of S.M.~is partially supported by the NSF of China under the grant No.~12301540. S.K.S is partially supported by the Academy of Finland (Centre of Excellence in Inverse Modelling and Imaging, grant 284715) and by the European Research Council under Horizon 2020 (ERC CoG 770924). The research of M.S.~is supported by Research Council Finland (CoE in Inverse Modelling and Imaging and FAME flagship, grants 353091 and 359208) and by the European Research Council under Horizon 2020 (ERC CoG 770924). The research of S.S.-A.~is partially supported by the Natural Sciences and Engineering Research Council of Canada.

\end{document}